\DeclareSymbolFont{cyrletters}{OT2}{wncyr}{m}{n}
\DeclareMathSymbol{\Zhe}{\mathalpha}{cyrletters}{"11} 
\begin{document}
\title{Fine Selmer groups of modular forms}
\author{Sören Kleine} 
\address{Institut für Theoretische Informatik, Mathematik und Operations Research, Universität der Bundeswehr München, Werner-Heisenberg-Weg 39, 85577 Neubiberg, Germany} 
\email{soeren.kleine@unibw.de} 
\author{Katharina Müller} 
\address{Department of Mathematics and Statistics, Université Laval,  Québec City, Canada} 
\email{katharina.mueller.1@ulaval.ca} 

\subjclass[2020]{11R23} 
\keywords{uniform $p$-adic Lie extension, fine Selmer group, $p$-adic Galois representation, $\mu$-invariant, $l_0$-invariant, modular form with complex multiplication}

\newcommand{\R}{\mathds{R}}
 	\newcommand{\Z}{\mathds{Z}}
 	\newcommand{\N}{\mathds{N}}
 	\newcommand{\Q}{\mathds{Q}}
 	\newcommand{\K}{\mathds{K}}
 	\newcommand{\M}{\mathds{M}} 
 	\newcommand{\C}{\mathds{C}}
 	\newcommand{\B}{\mathds{B}}
 	\newcommand{\LL}{\mathds{L}}
 	\newcommand{\F}{\mathds{F}}
 	\newcommand{\p}{\mathfrak{p}} 
 	\newcommand{\q}{\mathfrak{q}} 
 	\newcommand{\f}{\mathfrak{f}} 
 	\newcommand{\Pot}{\mathcal{P}}
 	\newcommand{\Gal}{\textup{Gal}}
 	\newcommand{\rg}{\textup{rank}}
 	\newcommand{\id}{\textup{id}}
 	\newcommand{\Ker}{\textup{Ker}}
 	\newcommand{\Image}{\textup{Im}} 
 	\newcommand{\pr}{\textup{pr}}
 	\newcommand{\la}{\langle}
 	\newcommand{\ra}{\rangle}
 	\newcommand{\gdw}{\Leftrightarrow}
 	\newcommand{\pfrac}[2]{\genfrac{(}{)}{}{}{#1}{#2}}
 	\newcommand{\Ok}{\mathcal{O}}
 	\newcommand{\Norm}{\mathrm{N}} 
 	\newcommand{\coker}{\mathrm{coker}}
 	\newcommand{\dotcup}{\stackrel{\textstyle .}{\bigcup}}
 	\newcommand{\Cl}{\mathrm{Cl}}
 	\newcommand{\Sel}{\textup{Sel}} 
 	\newcommand{\OkG}{\Ok\llbracket G\rrbracket }

 	\newtheorem{lemma}{Lemma}[section] 
 	\newtheorem{prop}[lemma]{Proposition} 
 	\newtheorem{defprop}[lemma]{Definition and Proposition} 
 	\newtheorem{conjecture}[lemma]{Conjecture} 
 	\newtheorem{thm}[lemma]{Theorem} 
 	\newtheorem*{thm*}{Theorem} 
 	\newtheorem{cor}[lemma]{Corollary}
 	\newtheorem{claim}[lemma]{Claim}

 	\theoremstyle{definition}
 	\newtheorem{def1}[lemma]{Definition} 
 	\newtheorem{ass}[lemma]{Assumption}
 	\newtheorem{rem}[lemma]{Remark} 
 	\newtheorem{rems}[lemma]{Remarks} 
 	\newtheorem{example}[lemma]{Example} 
 	\newtheorem{fact}[lemma]{Fact}

\maketitle

\begin{abstract}  
  We compare the Iwasawa invariants of fine Selmer groups of $p$-adic Galois representations over admissible $p$-adic Lie extensions of a number field $K$ to the Iwasawa invariants of ideal class groups along these Lie extensions. 
  
  More precisely, let $K$ be a number field, let $V$ be a $p$-adic representation of the absolute Galois group $G_K$ of $K$, and choose a $G_K$-invariant lattice ${T \subseteq V}$. We study the fine Selmer groups of ${A = V/T}$ over suitable $p$-adic Lie extensions $K_\infty/K$, comparing their corank and  $\mu$-invariant to the corank and the $\mu$-invariant of the Iwasawa module of ideal class groups in $K_\infty/K$. 
  
  In the second part of the article, we compare the Iwasawa $\mu$- and $l_0$-invariants of the fine Selmer groups of CM modular forms on the one hand and the Iwasawa invariants of ideal class groups on the other hand over trivialising multiple $\Z_p$-extensions of $K$. 
\end{abstract}

\section{Introduction} \label{section:intro} 
Almost 20 years ago, Coates-Sujatha, Wuthrich et. al. (see \cite{coates-sujatha}, \cite{wuthrich-fine-Selmer}) introduced the study of \emph{fine Selmer groups} into modern Iwasawa theory. The corresponding Iwasawa modules are certain quotients of the Pontryagin duals of the ordinary Selmer groups, which satisfy more natural finiteness conditions (i.e. whereas the Pontryagin duals of the Selmer groups are often non-torsion over the Iwasawa algebras, the Pontryagin duals of the fine Selmer groups are expected to be always torsion, and they indeed are in all known cases). 
One further motivation for the investigation of fine Selmer groups was the discovery of deep analogies between the Iwasawa modules of fine Selmer groups and ideal class groups in the seminal paper \cite{coates-sujatha} of Coates and Sujatha. For the purpose of illustration, we mention just one example of such an analogy. In \cite[Conjecture~A]{coates-sujatha}, Coates and Sujatha conjectured that the Iwasawa $\mu$-invariant of the fine Selmer group of an elliptic curve over the cyclotomic $\Z_p$-extension of a number field $K$ always vanishes. This is the analogue of a very classical conjecture of Iwasawa about the growth of ideal class groups along the cyclotomic $\Z_p$-extension of a number field $K$. 

Coates and Sujatha have proved in \cite[Theorem~3.4]{coates-sujatha} that the two conjectures are equivalent if the number field $K$ contains the group $E[p]$ of $p$-torsion points on the elliptic curve $E$. This has been generalised to the context of abelian varieties and non-cyclotomic $\Z_p$-extensions by Lim and Murty (see \cite[Theorem~5.5]{lim-murty}. In \cite{kleine-mueller2}, we generalised this analogy further. In order to describe the results from \cite{kleine-mueller2}, we have to introduce more notation. 

Fix a prime $p$ and a number field $K$. If ${p = 2}$, then we assume $K$ to be totally imaginary. Let $K_\infty/K$ be a uniform $p$-adic Lie extension (see Section~\ref{section:notation} for the definitions) with Galois group $G$, and let $A$ be an abelian variety which is defined over $K$. We choose a finite set $\Sigma$ of primes of $K$ which contains the set $\Sigma_p$ of primes above $p$, the set $\Sigma_{\textup{br}}(A)$ of primes where $A$ has bad reduction and the set $\Sigma_{\textup{ram}}(K_\infty/K)$ of primes which ramify in $K_\infty/K$. We denote by $Y_{A, \Sigma}^{(K_\infty)}$ the Pontryagin dual of the $\Sigma$-fine Selmer group of $A$ over $K_\infty$, and we let ${Y(K_\infty) = \varprojlim_{K'}(Y(K'))}$ be the projective limit of the $p$-primary subgroups of the ideal class groups of the finite subextensions ${K' \subseteq K_\infty}$ of $K$. 

The following statement is \cite[Theorem~1.1]{kleine-mueller2}: suppose that no prime ${v \in \Sigma}$ splits completely in $K_\infty/K$ and that ${A[p^k] \subseteq A(K_\infty)}$ for some ${k \in \N}$. Then 
\[ j \cdot \rg_{\Z_p\llbracket G\rrbracket }(Y_{A, \Sigma}^{(K_\infty)}) + \sum_{i = 0}^j f_i(Y_{A, \Sigma}^{(K_\infty)}) =  2d \cdot \sum_{i = 0}^j f_i(Y(K_\infty)) \] 
for each $j \le k$, where 
\[ f_i(X) = \rg_{\F_p\llbracket G\rrbracket }(p^i X[p^\infty]/p^{i+1} X[p^\infty])\] 
for any finitely generated $\Z_p\llbracket G \rrbracket $-module $X$, and where $d$ denotes the dimension of $A$. 

The first goal of the present article is to generalise this result to a context which is as broad as possible. To this purpose, let $K_\infty/K$ be an arbitrary admissible Lie extension (see Section~\ref{section:notation}), and let $V$ be a $p$-adic representation of the absolute Galois group $G_K$ of $K$, defined over a finite extension $F$ of $\Q_p$. Choose a $G_K$-invariant lattice ${T \subseteq V}$, and let ${A = V/T}$. Finally, let $\Ok$ be the ring of integers of $F$, choose some uniformising element $\pi$ of $\Ok$, and let $\F_q$ be the residue field $\Ok/(\pi)$. Then the fine Selmer group of $A$ over $K_\infty$ is a finitely generated $\OkG$-module, where ${G = \Gal(K_\infty/K)}$ as above. In this context, we prove the following result. 
\begin{thm} \label{thm:A} 
  Let $A = V/T$ be as above, and let $K_\infty/K$ be an admissible $p$-adic Lie extension with Galois group $G$. We fix a finite set $\Sigma$ which contains $\Sigma_p$, $\Sigma_{\textup{ram}}(A)$ and $\Sigma_{\textup{ram}}(K_\infty/K)$. If ${p = 2}$, then we assume that $K$ is totally imaginary. 
  
  Let ${k \in \N}$. If the decomposition subgroup ${D_v \subseteq G}$ is infinite for each ${v \in \Sigma}$ and ${A[\pi^k] \subseteq A(K_\infty)}$, then
  \begin{align} \label{eq:thmA} j \cdot \rg_{\OkG}(Y_{A, \Sigma}^{(K_\infty)}) + \sum_{i = 0}^j f_i(Y_{A, \Sigma}^{(K_\infty)}) = d \cdot \sum_{i = 0}^j f_i(Y(K_\infty)\otimes_{\Z_p} \Ok) \end{align} 
  for each $j \le k$, where now 
  \[ f_i(X) = \rg_{\F_q \llbracket G\rrbracket }(\pi^i X[p^\infty]/\pi^{i+1} X[p^\infty]) \] 
  for any finitely generated $\OkG$-module $X$, and where ${d = \dim(A)}$. 
\end{thm}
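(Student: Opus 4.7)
The plan is to adapt the strategy of \cite[Theorem~1.1]{kleine-mueller2} from $\Z_p$-lattices to general $\Ok$-lattices, working throughout with the $\pi$-adic filtration of $Y_{A,\Sigma}^{(K_\infty)}$ in place of the $p$-adic one. Two modifications to the earlier argument are needed: ranks are now taken over $\F_q\llbracket G\rrbracket$ rather than $\F_p\llbracket G\rrbracket$, and the combinatorial multiplicity $2d$ of the original formula is replaced by $d = \dim_F V$, reflecting that for a general $p$-adic representation one has no Weil pairing identifying $V$ with its Cartier dual.

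The central step is to identify, for each $0 \le i < k$, the $\F_q\llbracket G\rrbracket$-rank of $\pi^i Y_{A,\Sigma}^{(K_\infty)}[p^\infty] / \pi^{i+1} Y_{A,\Sigma}^{(K_\infty)}[p^\infty]$ with that of an explicit cohomology group attached to $A[\pi]$. Applying the snake lemma to multiplication-by-$\pi$ on the fine Selmer complex — equivalently, using the long exact sequence of $G_{K_\infty,\Sigma}$-cohomology arising from $0 \to A[\pi] \to A \xrightarrow{\pi} A \to 0$ together with the local conditions defining the fine Selmer group — produces an exact sequence whose main non-local term is a subquotient of $H^2(G_{K_\infty,\Sigma}, A[\pi])$.

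The hypothesis that every decomposition subgroup $D_v \subseteq G$ at $v \in \Sigma$ is infinite ensures that the local $\OkG$-modules appearing as error terms are pseudo-null over $\OkG$, and hence contribute $0$ to the $\F_q\llbracket G\rrbracket$-rank; the hypothesis $A[\pi^k] \subseteq A(K_\infty)$ then makes $A[\pi]$ a trivial $G_{K_\infty}$-module isomorphic to $\F_q^d$, so that
\[ H^2(G_{K_\infty,\Sigma}, A[\pi]) \;\cong\; H^2(G_{K_\infty,\Sigma}, \F_q)^d. \]
The Poitou--Tate duality argument already employed in the proof of \cite[Theorem~1.1]{kleine-mueller2} identifies the $\F_q\llbracket G\rrbracket$-corank of the right-hand side with $d \cdot f_i(Y(K_\infty) \otimes_{\Z_p} \Ok)$. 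Summing the resulting identities over $0 \le i \le j$ and adding the linear contribution $j \cdot \rg_{\OkG}(Y_{A,\Sigma}^{(K_\infty)})$ coming from the non-$\pi$-torsion part of $Y_{A,\Sigma}^{(K_\infty)}$ yields \eqref{eq:thmA}.

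The main obstacle, I expect, is the careful cohomological bookkeeping in the central step: verifying that all local error terms at primes $v \in \Sigma$ are genuinely pseudo-null over the (in general non-commutative) ring $\OkG$ under the infinite-decomposition hypothesis, and that none of the connecting maps in the iterated long exact sequences introduce hidden $\F_q\llbracket G\rrbracket$-rank contributions as one shifts along the $\pi$-adic filtration. A secondary technicality is the interplay between $p$-torsion and $\pi$-torsion (they coincide as subsets, but the $\pi$-adic filtration is finer than the $p$-adic one by the ramification index of $F/\Q_p$) and the rescaling between $\F_p\llbracket G\rrbracket$- and $\F_q\llbracket G\rrbracket$-ranks, which is absorbed on the right-hand side by the tensor product $Y(K_\infty) \otimes_{\Z_p} \Ok$.
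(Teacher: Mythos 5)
Your proposal takes a genuinely different route from the paper and, as sketched, has several gaps that would need to be closed before it could be counted as a proof.

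The paper does not work directly at the infinite level with long exact cohomology sequences. Instead it descends to the finite layers $K_n$ and runs an \emph{asymptotic counting} argument: after reducing to the uniform case via Lemma~\ref{lemma:uniform}, it combines (i) a growth formula for $v_p(|M_{G_n}/\pi^k|)$ in terms of $\rg_{\OkG}(M)$ and $\mu^{(k)}_{\OkG}(M)$ (Theorem~\ref{thm:iwasawamodules}); (ii) the elementary class-field-theoretic identification $\Sel_{0,A[\pi^k],\Sigma(L)}(L) \cong \mathrm{Hom}(Y_\Sigma(L), A[\pi^k])$ when $A[\pi^k]\subseteq A(L)$ (Lemma~\ref{lemma_lim-murty}); (iii) Lemma~\ref{lemma:control-alt} to pass from the truncated fine Selmer group to $\Sel_{0,A,\Sigma}(L)[\pi^k]$; and (iv) the control theorems (Theorem~\ref{control-allg} for the fine Selmer side and the class-group control theorem borrowed from \cite{kleine-mueller2}). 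Matching the coefficients of $p^{nl}$ on both sides gives the equality of $j\cdot\rg + \mu^{(j)}$ terms, which is what the sum in \eqref{eq:thmA} encodes. Crucially, the link between fine Selmer groups and class groups is made via $\mathrm{Hom}(Y_\Sigma(L), A[\pi^k])$, not via Poitou--Tate duality.

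The gaps in your sketch are the following. First, you propose to identify the individual graded piece $f_i(Y_{A,\Sigma}^{(K_\infty)})$ with a rank coming from $H^2(G_{K_\infty,\Sigma}, A[\pi])$, but this cohomology group does not depend on $i$; it is unclear how the $\pi$-adic filtration of $Y_{A,\Sigma}^{(K_\infty)}[\pi^\infty]$ would be tracked by iterating the snake lemma for $0\to A[\pi]\to A\xrightarrow{\pi} A\to 0$, and indeed the statement itself is only asserted as a \emph{sum} over $0\le i\le j$, not as a graded equality for each $i$ (which is generally false because of the rank term). Second, the claim that ``the hypothesis that $D_v$ is infinite for all $v\in\Sigma$ ensures that the local error terms are pseudo-null over $\OkG$'' is too strong: infiniteness of $D_v$ is a codimension-one condition, while pseudo-nullity requires codimension at least two; the paper only deduces the bound $|\Sigma(K_n)| = O(p^{n(l-1)})$ from this hypothesis (via \cite[Lemma~4.1]{limwildkernel}), which is precisely enough for the asymptotic argument and strictly weaker than pseudo-nullity. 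If you want to pursue the direct cohomological route you would need to quantify, not eliminate, these local contributions, and then still land on a counting argument to turn bounded errors into equalities of invariants.
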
 
In order to prove this result, we use an approach which originally goes back to Coates and Sujatha (see \cite[Lemma~3.8]{coates-sujatha}) and has been exploited in many other papers (for example, in \cite{lim-notesonthefineSelmergroups}, \cite{kleine-mueller2}, \cite{lim-murty}). These results relate the ideal class group over some number field containing a sufficiently large part of the $\pi$-torsion of an abelian variety $A$ to certain truncated fine Selmer groups of $A$ (see Section~\ref{section:fine-Selmer-groups} for the definitions) and will be generalised to our context in Lemma~\ref{lemma_lim-murty}. 
Then we use a control theorem in order to derive information about the structural invariants of $Y_{A, \Sigma}^{(K_\infty)}$. 

In the second part of the article, we consider Rankin-Selberg convolutions of CM modular forms $f_1, \ldots, f_k$ (see Definition~\ref{def:CM-Form}) over trivialising multiple $\Z_p$-extensions. In this setting, we can compare also finer structural invariants like the so-called \emph{$l_0$-invariants}, which are generalised Iwasawa-$\lambda$-invariants and have been introduced first by Cuoco and Monsky in \cite{cuoco-monsky} in the context of ideal class group Iwasawa modules. To this purpose, we need to assume that a certain finiteness condition $(\star)$ holds. This is an assumption the occurrence which is quite natural in this context (see for example Hypothesis~$H_{\textup{cyc}}$ in \cite{hatley-lei-kundu-ray}, or Assumption~(H0) in \cite{lei-lim}). We prove the following 
\begin{thm} \label{thm:intro-B} 
  Let $f_1, \ldots, f_k$ be CM modular forms, and let ${M = A_{f_1} \otimes \ldots \otimes A_{f_k}}$ be associated with the Rankin-Selberg convolution of ${f_1, \ldots, f_k}$. 
  
  We assume that $M[p] \subseteq M(K)$, and we let $K_\infty$ be a trivialising extension for $M$, i.e. $K_\infty$ is a $\Z_p^l$-extension of a finite extension $K'$ of $K$ such that $M$ is defined over $K_\infty$. If ${k > 1}$, then we assume that 
  \[ (\star): \quad \textup{$M^{D_\p}$ is finite for each prime $\p$ of $K_\infty$ above $p$,} \]  
  where ${D_\p \subseteq \Gal(K_\infty/K')}$ denotes the decomposition subgroup. 
  
  Then 
  \[ \mu_{\OkG}(Y_M^{(K_\infty)}) = 2^k ef \mu_{\OkG}(Y(K_\infty) \otimes_{\Z_p} \Ok), \] 
  \[ (l_0)_{\OkG}(Y_M^{(K_\infty)}) = 2^k e f (l_0)_{\OkG}(Y(K_\infty) \otimes_{\Z_p} \Ok), \] 
  and $\rg_{\OkG}(Y_M^{(K_\infty)}) = 0$. Here $e$ and $f$ denote the ramification index and the inertia degree of the prime $p$ in the local extension $F/\Q_p$, and $Y_M^{(K_\infty)}$ denotes the Pontryagin dual of the fine Selmer of $M$ over $K_\infty$. 
\end{thm}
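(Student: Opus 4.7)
The strategy is to reduce Theorem~\ref{thm:intro-B} to Theorem~\ref{thm:A}, applied with $A = M$, and then to upgrade from its $\mu$-invariant content to the $l_0$-invariant identity by varying over a family of $\Z_p$-quotients of $K_\infty/K'$.

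The first task is to verify the hypotheses of Theorem~\ref{thm:A}. Because $K_\infty/K'$ trivialises $M$ and $M[p] \subseteq M(K)$, the full divisible torsion $M[\pi^\infty]$ is rational over $K_\infty$, so $M[\pi^j] \subseteq M(K_\infty)$ for every $j \in \N$. For a prime $\p \mid p$ of $K_\infty$, the decomposition subgroup $D_\p$ is closed in the torsion-free pro-$p$ group $\Gal(K_\infty/K') \cong \Z_p^l$ and hence is either trivial or infinite; triviality would force $M^{D_\p} = M$ to be infinite, contradicting $(\star)$. For $v \in \Sigma \setminus \Sigma_p$, the infiniteness of $D_v$ in a multiple $\Z_p$-extension is standard after an innocuous enlargement of $\Sigma$. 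Once the hypotheses are in place, Theorem~\ref{thm:A} supplies
\[
j \cdot \rg_{\OkG}\bigl(Y_M^{(K_\infty)}\bigr) + \sum_{i=0}^{j} f_i\bigl(Y_M^{(K_\infty)}\bigr) = 2^{k} e f \cdot \sum_{i=0}^{j} f_i\bigl(Y(K_\infty) \otimes_{\Z_p} \Ok\bigr)
\]
for every $j \in \N$, with coefficient $d = 2^k ef$ equal to the $\Q_p$-dimension of the Galois representation underlying $M$, since each factor is two-dimensional over its coefficient field $F$ and $[F:\Q_p] = ef$. As $j \to \infty$, the right-hand side is bounded by $2^k ef \cdot \mu_{\OkG}(Y(K_\infty) \otimes_{\Z_p} \Ok) < \infty$, which forces $\rg_{\OkG}(Y_M^{(K_\infty)}) = 0$; passing to the limit then identifies both sums with the corresponding $\OkG$-$\mu$-invariants, giving the first displayed identity of the theorem.

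The $l_0$-invariant identity is not a direct consequence of Theorem~\ref{thm:A}, whose limit captures only the $\mu$-invariant. I would apply the preceding analysis along the family of $\Z_p$-subquotients $\widetilde K / K'$ of $K_\infty/K'$: each such $\widetilde K$ is again a trivialising $\Z_p$-extension, and for generic $\widetilde K$ the images of the decomposition subgroups at primes above $p$ remain nontrivial, so the analogue of $(\star)$ persists. The pointwise identity $\mu_{\OkG}(Y_M^{(\widetilde K)}) = 2^k ef \cdot \mu_{\OkG}(Y(\widetilde K) \otimes_{\Z_p} \Ok)$ then holds at every such level, and the Cuoco--Monsky description of the $l_0$-invariant as being assembled from the $\mu$-invariants of pullbacks along a cofinal family of $\Z_p$-subquotients integrates this to the desired $l_0$-identity.

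The main obstacle is precisely this last step: one must exhibit a sufficiently rich cofinal family of $\Z_p$-subquotients along which both the hypothesis $(\star)$ and the decomposition-group condition of Theorem~\ref{thm:A} are preserved, and one must rule out lower-order corrections when passing from the family of $\mu$-identities to the $l_0$-identity. The role of $(\star)$ is precisely to guarantee this persistence for a positive-density family of subquotients at each prime above $p$, and without it the local contributions at primes in $\Sigma_p$ could destabilise in the limit.
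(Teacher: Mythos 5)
Your plan to recover the rank and $\mu$-invariant identities by applying Theorem~\ref{thm:A} with $A = M$ and letting $j \to \infty$ is workable and is a genuinely different route from the paper: the paper never invokes Theorem~\ref{thm:A} here, but instead re-runs a direct growth estimate from scratch. However, two things should be corrected on that route. First, the correct coefficient for Theorem~\ref{thm:A} is $d = 2^k$, the $F$-dimension of $M$, not $2^k ef$; the paper's $d$ is by definition the rank of $A$ as an $\Ok$-module, and the factors $e,f$ only re-enter when converting to $\Z_p\llbracket G\rrbracket$-invariants (cf.\ Corollary~\ref{cor:mu-skalierung-f}, Lemma~\ref{lemma:mu-skalierung-e}, and Remark~\ref{rem:OkG}). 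Second, the claim that the decomposition-group hypothesis at primes $v \in \Sigma\setminus\Sigma_p$ is "standard after an innocuous enlargement of $\Sigma$" is backwards: enlarging $\Sigma$ cannot create infinite decomposition groups, and in a $\Z_p^l$-extension there may well be primes away from $p$ that split completely. The correct fix, which the paper uses, is to \emph{shrink} $\Sigma$ to $\Sigma_p$: this is legitimate because the fine Selmer group is $\Sigma$-independent over $K_\infty$ (Lemma~\ref{lemma:sujatha-witte}) and because $K'$ is chosen in Lemma~\ref{lemma:spielweise} so that $M$ and $K_\infty/K'$ are unramified outside $p$.

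The real gap is the $l_0$-identity, and you have essentially flagged it yourself. No fixed-$k$ result of the shape of Theorem~\ref{thm:A} can detect $l_0$, because for a $\Z_p^l$-module the $\mu$-invariant governs the leading term $\sim \mu f p^{nl}$ of $v_p(|M_{G_n}/\pi^k|)$, while $l_0$ shows up only in the subleading $n p^{n(l-1)}$ term, and even then only after one lets the truncation level $k$ grow with $n$ (concretely $k = n$). This is exactly why the paper proves Theorem~\ref{thm:B} directly rather than quoting Theorem~\ref{thm:A}: it combines Lemma~\ref{lem:cm-torsion contained}, Lemma~\ref{lemma_lim-murty}, Lemma~\ref{lemma:control-alt}, and the CM-specific control theorem~\ref{control-cm} (whose error bounds scale as $O(np^{n(l-1)})$, not $O(p^{n(l-1)})$) to estimate $v_p(|Y_M^{(K_n)}/p^n|)$ and $v_p(|Y(K_n)/p^n|)$ simultaneously to two orders of magnitude, and then invokes the Cuoco--Monsky-type asymptotic from \cite[Theorem~5.8 and Lemmas~7.28--7.29]{kleine-mueller2} to read off both $\mu$ and $l_0$ at once. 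Your alternative proposal to integrate pointwise $\mu$-identities over a cofinal family of $\Z_p$-subquotients of $K_\infty/K'$ is not a known mechanism for producing $l_0$; it is not developed here, you do not supply the Cuoco--Monsky-type reconstruction formula it would need, and you cannot control the error terms uniformly in the subquotient. Until that step is made precise, the $l_0$-identity is unproved.
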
 
The fine Selmer groups in Theorem~\ref{thm:intro-B} do not depend on the choice of the finite set $\Sigma$, as long as $\Sigma$ contains $\Sigma_p$ and the sets $\Sigma_{\textup{ram}}(M)$ and $\Sigma_{ram}(K_\infty/K)$ of primes at which $M$ ramifies, respectively, which ramify in the extension $K_\infty/K$ (see Lemma~\ref{lemma:sujatha-witte}). In the proof of Theorem~\ref{thm:intro-B}, we apply deep results from \cite{kleine-mueller2} in order to relate the growth of the ideal class groups of the intermediate number fields to the structural invariants of $Y(K_\infty)$

Let us briefly describe the structure of the article. In Section~\ref{section:notation}, besides introducing the basic setup and the notations, we prove several auxiliary results comparing the $\OkG$- and $\Z_p\llbracket G \rrbracket $-Iwasawa invariants of general Iwasawa modules. Since most authors study only the $\Z_p\llbracket G\rrbracket $-module structure of Iwasawa modules, this part of the article (although in parts a bit technical) might be of independent interest. In Section~\ref{section:auxiliary-results}, we prove several further auxiliary results. Section~\ref{section:control} is devoted to the proof of the control theorems which are needed for the proofs of the main results. These latter proofs are contained in the final section of the article. 

\textbf{Acknowledgements.} The second named author was supported by Antonio Lei's NSERC Discovery Grants RGPIN-2020-04259 and RGPAS-2020-00096. We would like to thank Antonio Lei 
for fruitful discussions during the preparation of this paper. 

\section{Background and basic definitions} \label{section:notation} 
\subsection{General notation} \label{subsection:general} 
Let $p$ be a rational prime. Throughout the article we fix a finite extension $F$ of $\Q_p$ with ring of integers $\Ok$. Let ${\pi \in \Ok}$ be a uniformising element. 

For any Noetherian $\Ok$-module $M$, we denote by $M[\pi^\infty]$ the subset of $\pi$-power torsion elements. For any ${i \in \N}$, ${i > 0}$, we denote by $M[\pi^i]$ the subset of $M[\pi^\infty]$ which consists exactly of the elements ${x \in M}$ which are annihilated by $\pi^i$. 

Similary, for any group $G$, $G[p^\infty]$ shall denote the subset of elements of $p$-power order. 

Fix a number field $K$ and a finite set $\Sigma = \Sigma(K)$ of primes of $K$. For any (non-necessarily finite) algebraic extension $L$ of $K$, we write $\Sigma(L)$ for the set of primes of $L$ which divide some ${v \in \Sigma}$. Moreover, $\Sigma_p(L)$ shall denote the set of primes of $L$ lying above $p$. In the case of ${L = K}$, we abbreviate $\Sigma_p(K)$ to $\Sigma_p$. 

Let $V$ be a finite-dimensional $F$-vector space with a continuous action of the absolute Galois group ${G_K = \Gal(\overline{K}/K)}$ of $K$, for some fixed algebraic closure $\overline{K}$ of $K$. Fix a Galois stable $\Ok$-lattice $T$ in $V$, and write ${A = V/T}$. Then $A$ is isomorphic as an $\Ok$-module to $(F/\Ok)^d$ for some non-negative integer $d$ which we call the \emph{dimension} of the Galois representation $V$. In particular, $A$ is $\pi$-primary, i.e. ${A = A[\pi^\infty]}$. By abuse of terminology, we will also refer to $d$ as the dimension of $A$. 

If $V$ is a $p$-adic Galois representation of $\Gal(\overline{K}/K)$ and ${A = V/T}$, as above, then we denote by $\Sigma_{\textup{ram}}(A)$ the set of primes of $K$ where $V$ is ramified. Note: in this article, we consider only Galois representations with a finite ramification set. 

\subsection{Admissible $p$-adic Lie extensions, Iwasawa modules and $\mu$-invariants} \label{subsection:Iwasawa-modules} 
\begin{def1} 
   An \emph{admissible $p$-adic Lie extension} $K_\infty$ of a number field $K$ shall be a normal extension $K_\infty/K$ such that \begin{compactitem} 
     \item ${G = \Gal(K_\infty/K)}$ is a compact pro-$p$, $p$-adic Lie group, 
     \item ${G[p^\infty] = \{0\}}$, and 
     \item the set $\Sigma_{\textup{ram}}(K_\infty/K)$ of primes of $K$ which ramify in $K_\infty$ is finite. 
  \end{compactitem} 
\end{def1} 

If $K_\infty/K$ is an admissible $p$-adic Lie extension with Galois group $G$, then the completed group ring 
$$\Ok\llbracket G \rrbracket = \varprojlim_{U\subseteq G} \Ok[G/U]$$ 
is a Noetherian domain by \cite[Theorem~2.3]{coates-howson} (here $U$ runs over the open normal subgroups of $G$). For any finitely generated $\Ok\llbracket G \rrbracket $-module $M$, we define the $\OkG$-rank of $M$ as 
\[ \rg_{\OkG}(M) = \dim_{\mathcal{F}(G)}(\mathcal{F}(G) \otimes_{\OkG} M), \] 
where $\mathcal{F}(G)$ denotes the skew field of fractions of $\OkG$ (the field of fractions exists by \cite[Corollary~.25]{Dixon} and \cite[Chapter~10]{goodearl-warfield}). 
A finitely generated $\OkG$-module $M$ is called \emph{torsion} if ${\rg_{\OkG}(M) = 0}$, and a torsion $\OkG$-module $M$ is called \emph{pseudo-null} if ${\textup{Ext}^1_{\OkG} (M, \OkG) = \{0\}}$ (this follows the definitions given in \cite{venjakob} and \cite{coates-schneider-sujatha}). Two finitely generated $\OkG$-modules $M$ and $M'$ are called \emph{pseudo-isomorphic} if there exists a $\OkG$-module homomorphism ${\varphi \colon M \longrightarrow M'}$ with pseudo-null kernel and cokernel. 

Following Howson (see \cite[formula~(33)]{howson}), we define the \emph{$\mu$-invariant} of a finitely generated $\OkG$-module $M$ as 
\begin{align} \label{eq:def-mu} \mu(M) = \mu_{\OkG}(M) = \sum_{i \, \ge \, 0} \rg_{\F_q \llbracket G\rrbracket } (\pi^i M[\pi^\infty]/\pi^{i+1} M[\pi^\infty]), \end{align}  
where ${\F_q = F/\Ok}$ is a finite field with $p^f$ elements (here ${f \in \N}$ denotes the inertia degree of $p$ in the extension $F/\Q_p$). 

Now let $M$ be a finitely generated $\OkG$-module. By \cite[Theorem~3.40]{venjakob}, $M[\pi^\infty]$ is pseudo-isomorphic to a direct sum 
\[ \bigoplus_{i = 1}^s \OkG/(\pi^{e_i}), \] 
where $e_1, \ldots, e_s \in \N$ and 
\[ e_1 + \ldots + e_s = \mu_{\OkG}(M). \] 
In this setting, we define the \emph{$k$-truncated $\mu$-invariant of $M$}, ${k \in \N}$, as 
\[ \mu^{(k)}(M) = \mu^{(k)}_{\OkG}(M) = \sum_{i = 1}^s \min(e_i,k) . \] 
It is easy to see that for a finitely generated torsion $\OkG$-module $M$, we have 
\[ \mu^{(k)}(M) = \mu(M[\pi^\infty]/\pi^k M[\pi^\infty]) = \sum_{i=0}^k \rg_{\F_q\llbracket G\rrbracket }(\pi^i M[\pi^\infty]/\pi^{i+1} M[\pi^\infty]), \] 
see also \cite[Lemma~5.8]{kleine-mueller2}. 

Modules of the form $\bigoplus_{i = 1}^s \OkG/(\pi^{e_i})$ are so-called \emph{elementary $p$-torsion $\OkG$-modules}.

Every $\Ok\llbracket G \rrbracket $-module is also a $\Z_p\llbracket G \rrbracket $-module, and we can define the $\mu$-invariants in both settings. In the following, we prove relations between these invariants. Let $e$ and $f$ be the ramification index and the inertia degree of $p$ in the extension $F/\Q_p$. Let $M$ be a $\Ok\llbracket G \rrbracket $-module and consider the two elementary modules
\[\bigoplus_{i=1}^s \Ok\llbracket G \rrbracket /(\pi^{e_i}) ,\quad \bigoplus_{j=1}^t\Z_p\llbracket G \rrbracket /(p^{g_j})\]
associated to ${M[p^\infty] = M[\pi^\infty]}$. 
Note that $\bigoplus_{j=1}^t\Z_p\llbracket G \rrbracket /(p^{g_j})\cong \bigoplus_{i=1}^s \Ok\llbracket G \rrbracket /(\pi^{e_i})$ as $\Z_p\llbracket G \rrbracket $-modules. Indeed, we have an exact sequence
\[0\longrightarrow K_1\longrightarrow M[p^\infty]\longrightarrow \bigoplus_{i=1}^s \OkG/(\pi^{e_i})\longrightarrow K_2\longrightarrow 0,\]
where $K_1$ and $K_2$ are pseudo-null as $\Ok\llbracket G \rrbracket $-modules. We have to show that $K_1$ and $K_2$ are also pseudo-null as $\Z_p\llbracket G \rrbracket $-modules. As both $K_1$ and $K_2$ are annihilated by some power of $p$, it suffices to show that the $\mu$-invariants of $K_1$ and $K_2$ as  $\Z_p\llbracket G \rrbracket $-modules vanish. 

The quotient $\pi^lK_j/\pi^{l+1}K_j$ is a torsion $(\mathcal{O}/\pi)\llbracket G \rrbracket $-module for every ${l\in \N}$ and ${1 \le j \le 2}$ by \cite[Lemme~1.9]{perbet}. Note that $(\mathcal{O}/\pi)\llbracket G \rrbracket $ is finitely generated as a $\F_p\llbracket G \rrbracket $-module. In particular, $\pi^lK_j/\pi^{l+1}K_j$ is also a torsion $\F_p\llbracket G \rrbracket $-module. Considering successively $$l = ie, ie + 1, ie + 2, \ldots, ie + e-1,$$ 
we may conclude that  $p^iK_j/p^{i+1}K_j$ is a torsion $\F_p\llbracket G \rrbracket $-module for every ${i \in \N}$. Since ${\mu_{\Z_p\llbracket G \rrbracket }(K_j) = \sum_{i \ge 0} \textup{rank}_{\F_p\llbracket G \rrbracket }(p^i K_j / p^{i+1} K_j)}$, we obtain that the $\mu$-invariant of $K_j$ vanishes as a $\Z_p\llbracket G \rrbracket $-module.
\begin{lemma}
  We have \[f\sum_{i=1}^s e_i=\sum_{j=1}^t g_j. \]
\end{lemma}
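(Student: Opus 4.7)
The plan is to combine the pseudo-isomorphism just established with a direct computation of $\mu_{\Z_p\llbracket G\rrbracket}(\OkG/(\pi^n))$. The preceding discussion showed that, as $\Z_p\llbracket G\rrbracket$-modules, $\bigoplus_{i=1}^s \OkG/(\pi^{e_i})$ and $\bigoplus_{j=1}^t \Z_p\llbracket G\rrbracket/(p^{g_j})$ are pseudo-isomorphic (both are pseudo-isomorphic to $M[p^\infty]$, and the error terms $K_1,K_2$ have vanishing $\mu_{\Z_p\llbracket G\rrbracket}$). Since $\mu$-invariants are invariant under pseudo-isomorphism, it follows that
\[\mu_{\Z_p\llbracket G\rrbracket}\Bigl(\bigoplus_{i=1}^s \OkG/(\pi^{e_i})\Bigr) = \mu_{\Z_p\llbracket G\rrbracket}\Bigl(\bigoplus_{j=1}^t \Z_p\llbracket G\rrbracket/(p^{g_j})\Bigr) = \sum_{j=1}^t g_j.\]
Thus the lemma reduces to proving that $\mu_{\Z_p\llbracket G\rrbracket}(\OkG/(\pi^n)) = nf$ for every $n\ge 1$.

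For the latter, I would use the $\pi$-adic filtration $\pi^l \OkG/(\pi^n)$ for $0 \le l \le n$. The successive quotients are all isomorphic to $\OkG/(\pi) \cong \F_q\llbracket G\rrbracket$, which is annihilated by $p$, so each of their $\mu_{\Z_p\llbracket G\rrbracket}$-invariants equals its rank over $\F_p\llbracket G\rrbracket$. Since $\F_q$ has $\F_p$-dimension $f$, $\F_q\llbracket G\rrbracket$ is free of rank $f$ over $\F_p\llbracket G\rrbracket$, so the rank equals $f$. Using additivity of $\mu_{\Z_p\llbracket G\rrbracket}$ in short exact sequences of finitely generated $\Z_p\llbracket G\rrbracket$-torsion modules (a standard fact, which in this case can also be read off directly from the definition via the graded pieces $p^iX/p^{i+1}X$), we conclude that $\mu_{\Z_p\llbracket G\rrbracket}(\OkG/(\pi^n)) = nf$. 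Summing over $i=1,\dots,s$ yields $f\sum_i e_i = \sum_j g_j$ as claimed.

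The only mildly subtle point is the additivity of $\mu_{\Z_p\llbracket G\rrbracket}$ under the $\pi$-adic filtration: one has to verify that the $\F_p\llbracket G\rrbracket$-ranks of $p^i X/p^{i+1}X$ accumulate correctly when one refines the $p$-adic filtration by the $\pi$-adic one. Writing $p = u\pi^e$ with $u\in\OkG^\times$, one has $p^i X = \pi^{ie}X$, and refining each step $\pi^{ie}X \supseteq \pi^{(i+1)e}X$ through the intermediate $\pi^{ie+j}X$ for $j=0,\dots,e-1$ shows that each $p^iX/p^{i+1}X$ contributes $\F_p\llbracket G\rrbracket$-rank equal to $e\cdot f$ whenever the filtration is still full, with a partial contribution for the last (truncated) step; summing the contributions for $X=\OkG/(\pi^{e_i})$ recovers $e_i f$, which is the same answer the short-exact-sequence argument gives. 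This bookkeeping is the only place where one has to be careful, but it is purely formal.
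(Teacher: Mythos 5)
Your proof is correct and takes essentially the same approach as the paper: you reduce to showing $\mu_{\Z_p\llbracket G\rrbracket}(\OkG/(\pi^n)) = nf$, which the paper proves by induction on the short exact sequence $0\to \pi^n\OkG/\pi^{n+1}\OkG\to \OkG/\pi^{n+1}\OkG\to \OkG/\pi^n\to 0$, and your $\pi$-adic filtration argument is exactly that induction unrolled, resting on the same two ingredients $\OkG/(\pi)\cong(\F_p\llbracket G\rrbracket)^f$ and additivity of $\mu_{\Z_p\llbracket G\rrbracket}$ in short exact sequences.
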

\begin{proof}
It suffices to prove the claim for the case that $s=1$. We will proceed by induction over $e_1$. Assume that $e_1=1$. Then 
\[\Ok\llbracket G \rrbracket /(\pi) \cong \mathbb{F}_{p^f}\llbracket G \rrbracket \cong (\mathbb{F}_p\llbracket G \rrbracket )^f, \]
where the last isomorphism is one of $\mathbb{F}_p\llbracket G \rrbracket $-modules. So we have established the claim in this case.
Now let us assume that we have already proved the claim for the module $\Ok\llbracket G \rrbracket /(\pi^n)$, i.e. that ${\mu_{\Z_p \llbracket G \rrbracket}(\OkG/(\pi^n)) = f \cdot n}$. Consider the tautological exact sequence
\[0\longrightarrow \pi^n\Ok\llbracket G \rrbracket /\pi^{n+1}\Ok\llbracket G \rrbracket \longrightarrow \Ok\llbracket G \rrbracket /\pi^{n+1}\Ok\llbracket G \rrbracket \longrightarrow \Ok\llbracket G \rrbracket /\pi^n\longrightarrow 0.\]
As $\mu$-invariants are well behaved under exact sequences by \cite[Corollary 3.37]{venjakob}, the claim follows once we can show that $\mu_{\Z_p\llbracket G \rrbracket }(\pi^n\Ok\llbracket G \rrbracket /\pi^{n+1}\Ok\llbracket G \rrbracket )=f$. There is a canonical isomorphism $\pi^n\Ok\llbracket G \rrbracket /\pi^{n+1}\Ok\llbracket G \rrbracket  \cong \Ok\llbracket G \rrbracket /\pi\Ok\llbracket G \rrbracket $ and the claim is immediate from the case $e_1=1$.
\end{proof}
\begin{cor} \label{cor:mu-skalierung-f} 
   If $M$ is a finitely generated $\OkG$-module, then 
   \[ \mu_{\Z_p \llbracket G \rrbracket }(M) = f \cdot \mu_{\OkG}(M). \] 
\end{cor}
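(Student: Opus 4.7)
The plan is to reduce the corollary to the preceding lemma via the structure theory of $\OkG$-modules. First, I observe that both $\mu$-invariants depend only on the torsion part: by the definition in \eqref{eq:def-mu}, we have $\mu_{\OkG}(M) = \mu_{\OkG}(M[\pi^\infty])$, and analogously $\mu_{\Z_p\llbracket G\rrbracket}(M) = \mu_{\Z_p\llbracket G\rrbracket}(M[p^\infty])$. Since $\pi$ and $p$ generate the same radical in $\Ok$, we have $M[\pi^\infty] = M[p^\infty]$, so it suffices to compare the two $\mu$-invariants on this common submodule.

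Next, I would invoke the structure theorem \cite[Theorem~3.40]{venjakob} on both sides: there is an elementary $\OkG$-module $E_{\Ok} = \bigoplus_{i=1}^s \OkG/(\pi^{e_i})$ pseudo-isomorphic to $M[\pi^\infty]$ as an $\OkG$-module, with $\mu_{\OkG}(M) = \sum_{i=1}^s e_i$, and similarly an elementary $\Z_p\llbracket G\rrbracket$-module $E_{\Z_p} = \bigoplus_{j=1}^t \Z_p\llbracket G\rrbracket/(p^{g_j})$ pseudo-isomorphic to $M[p^\infty]$ as a $\Z_p\llbracket G\rrbracket$-module, with $\mu_{\Z_p\llbracket G\rrbracket}(M) = \sum_{j=1}^t g_j$. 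The key transfer step — that the pseudo-isomorphism $M[\pi^\infty] \sim E_{\Ok}$ over $\OkG$ is also a pseudo-isomorphism over $\Z_p\llbracket G\rrbracket$ — has already been established in the paragraph immediately preceding the lemma, where it is shown that the $\OkG$-pseudo-null kernel and cokernel $K_1, K_2$ have vanishing $\Z_p\llbracket G\rrbracket$-$\mu$-invariant and hence, being $p$-power torsion, are pseudo-null over $\Z_p\llbracket G\rrbracket$.

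Consequently, $E_{\Ok}$ and $E_{\Z_p}$ are both pseudo-isomorphic as $\Z_p\llbracket G\rrbracket$-modules to $M[p^\infty]$, hence to each other. Since the $\Z_p\llbracket G\rrbracket$-$\mu$-invariant is invariant under pseudo-isomorphism by \cite[Corollary~3.37]{venjakob}, we obtain $\mu_{\Z_p\llbracket G\rrbracket}(E_{\Ok}) = \mu_{\Z_p\llbracket G\rrbracket}(E_{\Z_p}) = \sum_{j=1}^t g_j = \mu_{\Z_p\llbracket G\rrbracket}(M)$. On the other hand, the preceding lemma gives $\sum_{j=1}^t g_j = f \sum_{i=1}^s e_i = f \cdot \mu_{\OkG}(M)$, and combining these identities yields the claim.

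The only delicate point in this plan is the compatibility of pseudo-nullity across the two base rings, but as noted this is handled in the discussion preceding the lemma. Everything else is a routine application of the structure theory together with the lemma that has just been proved.
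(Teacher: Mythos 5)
Your proposal is correct and follows essentially the same approach as the paper's own (very terse) proof: reduce to elementary $\pi$-torsion modules via pseudo-isomorphism invariance of $\mu$-invariants, rely on the transfer-of-pseudo-nullity argument established in the paragraph preceding the lemma to move between the $\OkG$- and $\Z_p\llbracket G\rrbracket$-structures, and then apply the lemma. You have merely spelled out explicitly the steps the paper leaves implicit in its one-line justification.
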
 
\begin{proof} 
  In view of \cite[Proposition~3.34]{venjakob}, it suffices to consider elementary $\Z_p$-torsion $\OkG$-modules. Now apply the previous lemma. 
\end{proof} 

A second property that we will need frequently is the behavior of $\mu$-invariants under tensor products. Let $M$ be a $\Z_p\llbracket G \rrbracket $-module. Then $M\otimes \Ok$ is a $\Ok\llbracket G \rrbracket $-module.
\begin{lemma} \label{lemma:mu-skalierung-e} 
  \[ \mu_{\Z_p\llbracket G \rrbracket}(M)= \frac{1}{e} \cdot \mu_{\OkG}(M\otimes_{\Z_p} \Ok). \]
\end{lemma}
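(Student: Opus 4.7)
My plan is to compute $\mu_{\Z_p\llbracket G\rrbracket}(M\otimes_{\Z_p}\Ok)$ in two different ways and compare them, reducing everything to the previously established Corollary~\ref{cor:mu-skalierung-f}. Throughout, $M$ should be interpreted as a finitely generated $\Z_p\llbracket G\rrbracket$-module so that the $\mu$-invariants are defined.

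First I would note that, as a $\Z_p$-module, $\Ok$ is free of rank $[F\colon\Q_p]=ef$. Since the $\Z_p\llbracket G\rrbracket$-action on $M\otimes_{\Z_p}\Ok$ affects only the first tensor factor, a choice of $\Z_p$-basis of $\Ok$ yields an isomorphism
\[ M\otimes_{\Z_p}\Ok \;\cong\; M^{ef} \]
of $\Z_p\llbracket G\rrbracket$-modules. Since the $\mu$-invariant is additive on direct sums (this is contained in \cite[Corollary~3.37]{venjakob}), this gives
\[ \mu_{\Z_p\llbracket G\rrbracket}(M\otimes_{\Z_p}\Ok) \;=\; ef\cdot\mu_{\Z_p\llbracket G\rrbracket}(M). \]

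On the other hand, $M\otimes_{\Z_p}\Ok$ carries a natural $\OkG$-module structure, and it is finitely generated over $\OkG$ since $M$ is finitely generated over $\Z_p\llbracket G\rrbracket$. Applying Corollary~\ref{cor:mu-skalierung-f} to this $\OkG$-module yields
\[ \mu_{\Z_p\llbracket G\rrbracket}(M\otimes_{\Z_p}\Ok) \;=\; f\cdot\mu_{\OkG}(M\otimes_{\Z_p}\Ok). \]
Equating the two displayed expressions and dividing by $ef$ gives the desired identity $\mu_{\Z_p\llbracket G\rrbracket}(M)=\frac{1}{e}\cdot\mu_{\OkG}(M\otimes_{\Z_p}\Ok)$.

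There is no real obstacle here; the argument is essentially bookkeeping. The only point requiring a moment's care is to verify that the two module structures on $M\otimes_{\Z_p}\Ok$ (the one as a $\Z_p\llbracket G\rrbracket$-module via the $M$-factor, and the one as a $\OkG$-module via both factors) are compatible in the sense required by Corollary~\ref{cor:mu-skalierung-f}, so that the corollary may be applied. This is immediate because the $\Ok$-action and the $G$-action commute on $M\otimes_{\Z_p}\Ok$.
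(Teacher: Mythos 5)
Your proof is correct, and it takes a genuinely different route from the paper's. The paper invokes pseudo-isomorphism invariance and additivity over exact sequences to reduce to the case $M=\Z_p\llbracket G\rrbracket/(p^n)$, then concludes by the direct computation $\OkG/(p^n)=\OkG/(\pi^{en})$. You instead compute the intermediate quantity $\mu_{\Z_p\llbracket G\rrbracket}(M\otimes_{\Z_p}\Ok)$ two ways: once using the $\Z_p\llbracket G\rrbracket$-module isomorphism $M\otimes_{\Z_p}\Ok\cong M^{ef}$ (which comes from the $\Z_p$-freeness of $\Ok$ and the fact that $G$ acts only on the first tensor factor), and once via Corollary~\ref{cor:mu-skalierung-f} applied to the $\OkG$-module $M\otimes_{\Z_p}\Ok$. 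Your approach has the merit of avoiding the structure theorem at this stage (though of course Corollary~\ref{cor:mu-skalierung-f} itself rests on it) and sidesteps the implicit verification in the paper's reduction that the pseudo-null kernels and cokernels remain negligible after tensoring with $\Ok$; in exchange it needs the simple but essential observation that the $\Z_p\llbracket G\rrbracket$-structure on $M\otimes_{\Z_p}\Ok$ obtained by restriction of scalars from the $\OkG$-structure agrees with the one induced from $M$, which you correctly note follows because the $G$- and $\Ok$-actions commute. Both arguments are sound and of comparable length.
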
 
\begin{proof}
As $\mu$-invariants are well-behaved under pseudo-isomorphisms and exact sequences of torsion $\OkG$-modules by \cite[Proposition~3.34 and Corollary~3.37]{venjakob}, it suffices to prove the lemma for a module $M=\Z_p\llbracket G \rrbracket /(p^n)$. If we tensor with $\mathcal{O}$, we obtain $\OkG/(p^n)=\OkG/(\pi^{en})$, from which the claim is immediate. 
\end{proof} 

\subsection{The $l_0$-invariant} 
In the special case $G\cong \Z_p^l$, ${l \in \N}$, one can define further invariants attached to finitely generated $\OkG$-modules. To this purpose, we note that in this case every finitely generated $\OkG$-module $M$ is pseudo-isomorphic to an elementary $\OkG$-module of the form 
\[ E_M = T(M) \oplus \bigoplus_{i = 1}^s \OkG/(\pi^{n_i}) \oplus \bigoplus_{j = 1}^t \OkG/(f_j), \] 
where ${T(M) \subseteq M}$ denotes the maximal torsion-free quotient, and ${f_1, \ldots, f_t \in \OkG}$ are power series which are not divisible by $\pi$ (see \cite[Proposition~5.1.7]{nsw}). An analogous property holds in the special case ${\Ok = \Z_p}$, with $\pi$ replaced by $p$. Note that ${\mu(M) = \mu_{\OkG}(M) = \sum_i n_i}$. 

Now we define the \emph{$l_0$-invariant} of $M$, as a $\OkG$-module, as follows. Write ${f_M = \pi^{\mu(M)} \cdot \prod_{j=1}^t f_j}$ (this is the so-called \emph{characteristic power series} of $M$), and let ${g_M = f_M / \pi^{\mu(M)}}$. Then we consider the (non-zero) image $\overline{g_M}$ of $g_M$ in the quotient ring ${\Omega\llbracket G\rrbracket := \OkG/(\pi)}$, and we let 
\[ l_0(M) = (l_0)_{\OkG}(M) = \sum_{\mathcal{P}}v_{\mathcal{P}}(\overline{g_M}), \] 
where the sum runs over the prime ideals $\mathcal{P}$ of $\Omega\llbracket G\rrbracket $ of the form ${\mathcal{P} = (\overline{\sigma - 1})}$, for elements ${\sigma \in G \setminus G^p}$. This is a finite sum since $\Omega\llbracket G\rrbracket $ is a Noetherian domain. 

Similarly, considering $M$ as a $\Z_p\llbracket G\rrbracket $-module, we have a characteristic power series $f_M^{\Z_p \llbracket G\rrbracket } \in \Z_p\llbracket G \rrbracket $ attached to $M$, and we let $g_M^{\Z_p\llbracket G\rrbracket } = f_M^{\Z_p\llbracket G\rrbracket } / p^{\mu_{\Z_p\llbracket G\rrbracket }(M)}$. Then we define 
\[ (l_0)_{\Z_p\llbracket G\rrbracket }(M) = \sum_{\mathcal{P}} v_{\mathcal{P}}(\overline{g_M^{\Z_p\llbracket G\rrbracket }}), \] 
where the sum now runs over the prime ideals ${\mathcal{P} = (\overline{\sigma - 1})}$ of the quotient algebra ${\Z_p\llbracket G\rrbracket /(p)}$. 

We would like to understand the relations between $(l_0)_{\OkG}$ and $(l_0)_{\Z_p\llbracket G\rrbracket }$. As a first step we will show that every pseudo-null $\Ok\llbracket G\rrbracket $-module is also pseudo-null as a $\Z_p\llbracket G\rrbracket $-module. Note that a $\Ok\llbracket G\rrbracket $-module $M$ is pseudo-null in the case ${G \cong \Z_p^l}$ if and only if the annihilator of $M$ contains two coprime elements $f_1$ and $f_2$. Using the Galois module structure behind the finite local extension $\mathcal{O}/\Z_p$ of rings, there is a well-defined norm map $N\colon \mathcal{O}\llbracket G\rrbracket \longrightarrow \Z_p\llbracket G\rrbracket $. Let $Q_1, \dots, Q_r$ be a complete list of all prime elements such that $N(Q_i)$ is not coprime with $N(f_1)$. Let $1, \dots, r_1$ be the indices such that $Q_i\mid f_2$, let $r_1+1, \dots, r_2$ be the indices such that $Q_i\mid f_1$ and let $r_2+1,\dots, r$ be the indices such that $Q_i$ divides neither $f_1$ nor $f_2$. Note that these three sets of indices are disjoint as $f_1$ and $f_2$ are relatively prime. Choose now a power series $g$ that is coprime to $Q_i$ for $1\le i\le r_2$ and divisible by $Q_i$ for $r_2+1\le i\le r$. Then none of the $Q_i$ divides $f_2+gf_1$.

Thus, by substituting $f_2$ by  $f_2+gf_1$ we can always assume that $N(f_1)$ and $N(f_2)$ are coprime. Note that $N(f_1)$ and $N(f_2)$ lie inside the intersection $\textup{Ann}(M)\cap \Z_p\llbracket G\rrbracket $. Therefore $M$ is also pseudo-null as a $\Z_p\llbracket G\rrbracket $-module. So if we want to understand how $l_0$-invariants behave under ring extensions, it suffices to consider elementary modules.
\begin{lemma} \label{lemma:l0-1} Let $M$ be a finitely generated $\Z_p\llbracket G\rrbracket $-module. Then
  \[(l_0)_{\Ok\llbracket G\rrbracket }(M\otimes_{\Z_p} \Ok)=(l_0)_{\Z_p\llbracket G\rrbracket }(M). \]
\end{lemma}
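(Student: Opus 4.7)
The plan is to reduce to elementary $\Z_p\llbracket G\rrbracket$-modules and then verify the identity summand by summand.

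Since $\Ok$ is free over $\Z_p$, the functor $-\otimes_{\Z_p}\Ok$ is exact. I claim it also preserves pseudo-isomorphisms: if $K$ is a pseudo-null $\Z_p\llbracket G\rrbracket$-module, annihilated by two coprime elements $f_1,f_2\in\Z_p\llbracket G\rrbracket$, then the extended ideal $(f_1,f_2)\Ok\llbracket G\rrbracket$ again has height $\ge 2$, because $\Z_p\llbracket G\rrbracket\hookrightarrow\Ok\llbracket G\rrbracket$ is a finite flat extension of regular local rings of the same Krull dimension $l+1$ (so going up/down preserves heights). Hence $K\otimes_{\Z_p}\Ok$ is pseudo-null over $\Ok\llbracket G\rrbracket$. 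Since both $(l_0)$-invariants depend only on the characteristic power series modulo units and are additive on direct sums of torsion modules, it suffices to prove the lemma when $M$ is of the form $\Z_p\llbracket G\rrbracket/(p^n)$ or $\Z_p\llbracket G\rrbracket/(f)$ with $p\nmid f$ (torsion-free summands contribute $0$ on both sides).

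For $M=\Z_p\llbracket G\rrbracket/(p^n)$ one has $M\otimes_{\Z_p}\Ok=\Ok\llbracket G\rrbracket/(\pi^{en})$, which is pure $\pi$-torsion; so $g=1$ and $(l_0)=0$ on both sides. For $M=\Z_p\llbracket G\rrbracket/(f)$ with $p\nmid f$, the image of $f$ in $\F_q\llbracket G\rrbracket$ coincides with its image under $\F_p\llbracket G\rrbracket\hookrightarrow\F_q\llbracket G\rrbracket$ and is therefore non-zero, so $\pi\nmid f$ in $\Ok\llbracket G\rrbracket$ and $M\otimes_{\Z_p}\Ok=\Ok\llbracket G\rrbracket/(f)$ is $\Ok\llbracket G\rrbracket$-elementary with $\mu=0$ and $g_{M\otimes\Ok}=f$. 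The identity thus reduces to
\[ \sum_{\mathcal{P}'}v_{\mathcal{P}'}(\overline{f}) \;=\; \sum_{\mathcal{P}}v_{\mathcal{P}}(\overline{f}), \]
with $\mathcal{P}'$ ranging over primes $(\overline{\sigma-1})$ of $\F_q\llbracket G\rrbracket$ and $\mathcal{P}$ over the corresponding primes of $\F_p\llbracket G\rrbracket$.

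The final step is the valuation identity, and here lies the main subtlety. The ideal $(\overline{\sigma-1})$ depends only on the closed rank-one $\Z_p$-subgroup $\overline{\langle\sigma\rangle}\subseteq G$, being the kernel of the projection onto the reduced completed group algebra of $G/\overline{\langle\sigma\rangle}$; so the primes of this form in $\F_p\llbracket G\rrbracket$ and $\F_q\llbracket G\rrbracket$ are canonically parameterized by the same set of rank-one closed subgroups of $G$. Choosing topological generators of $G$ with $\sigma$ as the first one yields an embedding $\F_p\llbracket G\rrbracket/\mathcal{P}\cong\F_p[[T_2,\ldots,T_l]]\hookrightarrow\F_q[[T_2,\ldots,T_l]]\cong\F_q\llbracket G\rrbracket/\mathcal{P}'$, and the localizations $\F_p\llbracket G\rrbracket_\mathcal{P}$ and $\F_q\llbracket G\rrbracket_{\mathcal{P}'}$ are DVRs sharing the uniformizer $\overline{\sigma-1}$. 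Consequently $v_{\mathcal{P}}(\overline{f})=v_{\mathcal{P}'}(\overline{f})$ for every $\overline{f}\in\F_p\llbracket G\rrbracket\subseteq\F_q\llbracket G\rrbracket$; summing over all such $\sigma$ then establishes the equality. The most delicate part is the canonical identification of the indexing sets of primes, which ultimately rests on viewing both ideals intrinsically via rank-one quotients of $G$ rather than in ring-dependent terms.
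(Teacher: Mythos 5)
Your proof is correct and follows essentially the same route as the paper's: reduce to cyclic torsion modules and compare the characteristic power series. The paper's proof is a one-liner ("it suffices to consider $M = \Z_p\llbracket G\rrbracket/(h)$; then $M\otimes\Ok \cong \Ok\llbracket G\rrbracket/(h)$ and the claim is immediate"), whereas you explicitly supply the two pieces that make the reduction and the "immediacy" rigorous: (1) that $-\otimes_{\Z_p}\Ok$ preserves pseudo-isomorphisms, via the height argument for the finite flat extension $\Z_p\llbracket G\rrbracket\hookrightarrow\Ok\llbracket G\rrbracket$ (note the paper discusses the opposite direction — $\OkG$-pseudo-null implies $\Z_p\llbracket G\rrbracket$-pseudo-null — but not this one), and (2) that for a prime $\mathcal{P}=(\overline{\sigma-1})$ and an element $\overline{f}$ of the subring $\F_p\llbracket G\rrbracket$, the valuations $v_{\mathcal{P}}(\overline{f})$ and $v_{\mathcal{P}'}(\overline{f})$ agree, since both localizations are DVRs with the common uniformizer $\overline{\sigma-1}$ and the exponent of $T_1$ dividing $\overline{f}$ is insensitive to base change of the residue field. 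Both observations are sound, and the parameterization of the relevant primes by rank-one closed subgroups of $G$ is a clean way to make the identification of indexing sets canonical. No gaps.
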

\begin{proof}
  It suffices to consider a module of the form $M=\Z_p\llbracket G\rrbracket /(h)$. Then $M\otimes \Ok\cong \Ok\llbracket G\rrbracket /(h)$ and the claim is immediate. 
\end{proof}
\begin{lemma} \label{lemma:l0-2} 
  Let $M$ be a finitely generated $\Ok\llbracket G\rrbracket $-module. Then
  \[ef(l_0)_{\Ok\llbracket G\rrbracket }(M)=(l_0)_{\Z_p\llbracket G\rrbracket }(M). \]
\end{lemma}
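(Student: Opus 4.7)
The plan is to reduce to elementary $\Ok\llbracket G\rrbracket$-modules and then to a single non-trivial case via a direct norm computation. By the discussion preceding Lemma~\ref{lemma:l0-1}, every pseudo-null $\Ok\llbracket G\rrbracket$-module is also pseudo-null over $\Z_p\llbracket G\rrbracket$, so a pseudo-isomorphism of $\Ok\llbracket G\rrbracket$-modules is automatically one of $\Z_p\llbracket G\rrbracket$-modules. Combined with the additivity of $l_0$-invariants under short exact sequences of torsion modules, this lets me replace $M$ by its associated elementary module $E_M$. Torsion-free summands contribute $0$ to both $l_0$-invariants by construction of $g_M$, and the $\pi$-power summands $\Ok\llbracket G\rrbracket/(\pi^n)$ are $p$-power torsion, whose $\Z_p\llbracket G\rrbracket$-elementary form has $g_M^{\Z_p\llbracket G\rrbracket} = 1$. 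So it suffices to treat $M = \Ok\llbracket G\rrbracket/(h)$ with $h$ coprime to $\pi$.

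For such $M$, one has $g_M = h$. Viewing $\Ok\llbracket G\rrbracket$ as a free $\Z_p\llbracket G\rrbracket$-module of rank $ef$ and multiplication by $h$ as a $\Z_p\llbracket G\rrbracket$-linear endomorphism, the $\Z_p\llbracket G\rrbracket$-characteristic ideal of $M$ is generated by the norm $N(h) \in \Z_p\llbracket G\rrbracket$; and Corollary~\ref{cor:mu-skalierung-f} gives $\mu_{\Z_p\llbracket G\rrbracket}(M) = 0$, so $g_M^{\Z_p\llbracket G\rrbracket} = N(h)$ up to a unit.

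The heart of the argument is the identity
\[ \overline{N(h)} \;=\; \prod_{\tau \in \Gal(\F_q/\F_p)} \tau(\overline{h})^{e} \quad \text{in } \F_p\llbracket G\rrbracket, \]
where $\overline{h}$ is the image of $h$ in $\F_q\llbracket G\rrbracket = \Ok\llbracket G\rrbracket/(\pi)$. I would prove it by factoring the local extension $\Z_p \subseteq \Ok$ through the ring of Witt vectors $W = W(\F_q)$ and writing $N = N_2 \circ N_1$. The unramified step $N_2$ is the Galois product $\prod_{\tau \in \Gal(W/\Z_p)} \tau$, which mod~$p$ gives the outer product. For the totally ramified step, the key observation is that $\Ok/p\Ok \cong \F_q[\pi]/(\pi^e)$, so the matrix of multiplication by $h \bmod p$ on $(\Ok/p\Ok)\llbracket G\rrbracket$ over $\F_q\llbracket G\rrbracket$ in the basis $1,\pi,\ldots,\pi^{e-1}$ is lower triangular with diagonal entry $\overline{h}$, yielding $N_1(h) \bmod p = \overline{h}^e$.

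It then remains to compare valuations. For $\sigma \in G \setminus G^p$, the element $\overline{\sigma-1}$ has $\F_p$-coefficients, so the prime $\mathcal{P} = (\overline{\sigma-1}) \subseteq \F_p\llbracket G\rrbracket$ is Galois-stable and extends to the prime $(\overline{\sigma-1}) \subseteq \F_q\llbracket G\rrbracket$ with ramification index $1$ (the quotient $\F_q\llbracket G/\la\sigma\ra\rrbracket$ being a domain). Hence $v_{\mathcal{P}}(\tau(\overline{h})) = v_{\mathcal{P}}(\overline{h})$ for every $\tau$, and the displayed identity gives $v_{\mathcal{P}}(\overline{N(h)}) = ef \cdot v_{\mathcal{P}}(\overline{h})$; summing over $\mathcal{P}$ yields the lemma. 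I expect the main obstacle to be the matrix computation for $N_1 \bmod p$ and carefully disentangling the two different reductions (mod $\pi$ of elements of $\Ok\llbracket G\rrbracket$ versus mod $p$ of elements of $W\llbracket G\rrbracket$) that meet in $\F_q\llbracket G\rrbracket$.
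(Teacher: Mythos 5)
Your argument is correct, and it is a genuinely different and considerably more elementary route than the one in the paper. The paper reduces to $M = \Ok\llbracket G\rrbracket/(g_0^t)$ with $g_0$ irreducible and then branches into cases according to the Galois structure of $F/\Q_p$: first the cyclotomic/abelian case (handled via the Galois symmetry of the factorisation of the prime $P = \Z_p\llbracket G\rrbracket \cap g_0\Ok\llbracket G\rrbracket$), then the general case, where it isolates the ``special-prime'' part $M'$ and invokes the Cuoco--Monsky counting machinery (the maps $\psi, \psi'$ to $\bigoplus \Z_p[\zeta]$ and $\bigoplus \Ok[\zeta]$ and the asymptotic estimate of $v_p(|M''/I_n M''[p^\infty]|)$) to extract the $l_0$-term from the growth of finite quotients. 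You instead stay purely local-algebraic: reduce to $M = \Ok\llbracket G\rrbracket/(h)$ with $h$ coprime to $\pi$, observe that the $\Z_p\llbracket G\rrbracket$-characteristic power series is the norm $N(h)$ via the length-one free resolution, factor $N$ through the maximal unramified subring $W = W(\F_q)$, and obtain $\overline{N(h)} = \prod_{\tau}\tau(\overline{h})^e$ from the triangular-matrix computation of $N_1 \bmod p$ plus the Galois product for $N_2$. The valuation bookkeeping then works exactly as you describe: the primes $(\overline{\sigma-1})$ are unramified with residue degree $1$ in $\F_q\llbracket G\rrbracket/\F_p\llbracket G\rrbracket$ and Galois-stable, so each conjugate $\tau(\overline{h})$ contributes $v_{\mathcal P}(\overline{h})$ and you pick up the factor $ef$ cleanly. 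Your reduction step is also sound, since the discussion before Lemma~\ref{lemma:l0-1} shows $\Ok\llbracket G\rrbracket$-pseudo-null implies $\Z_p\llbracket G\rrbracket$-pseudo-null, so an $\Ok\llbracket G\rrbracket$-pseudo-isomorphism is a $\Z_p\llbracket G\rrbracket$-pseudo-isomorphism, and free and $\pi$-power-torsion summands contribute $0$ to both sides. The one small thing worth spelling out explicitly is the identification $\overline h = h_0 \bmod p$ (i.e.\ that the residue of $h$ mod $\pi$ in $\F_q\llbracket G\rrbracket$ coincides with the constant coefficient of $h$ in the $W\llbracket G\rrbracket$-basis $1,\pi,\dots,\pi^{e-1}$ reduced mod $p$) — this is what makes the two reductions meet in $\F_q\llbracket G\rrbracket$, as you flagged, and it follows from $\Ok/\pi\Ok \cong W/pW$. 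In short: same lemma, a cleaner and more self-contained proof.
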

\begin{proof}
We can assume that $M=\Ok\llbracket G\rrbracket /(h)$ and that $h=g_0^t$ is the power of an irreducible element. Let $P$ be a generator of the prime ideal $\Z_p\llbracket G\rrbracket \cap \Ok\llbracket G\rrbracket g_0$. Then we obtain a decomposition $P=g_0^{e_0}\prod_{i=1}^{s-1}g_i^{e_i}$, where the $g_i$ generate prime ideals in $\Ok\llbracket G\rrbracket $. In the case that $F/\Q_p$ is Galois, all the $e_i$ are equal to each other and we can find elements $\sigma_i\in \Gal(F/\Q_p)$ such that $\sigma_i(g_0)=g_i$ for all $1\le i\le s-1$. For each $g_i$ we can define a decomposition group $G_i=\{\sigma\in \Gal(F/\Q_p)\mid (\sigma(g_i))=(g_i)\}$. The easiest case will be that all the $G_i$ are equal to each other. So we will start with the case that $F/\Q_p$ is an abelian extension. In this case there are integers $n$ and $q$, with $q$ coprime to $p$, such that $\Ok\subseteq \Z_p[\zeta_{qp^n}]$.

We start with the case that all $e_i=1$ and that $s=[\Ok:\Z_p]$, i.e $\sigma(g_0)$ and $g_0$ are coprime for all $\sigma\in\Gal(F/\Q_p)$.
Note that $\sigma(\tau-1)=\tau-1$ for every $\tau\in G$ and $\sigma\in \Gal(F/\Q_p)$. As the maximal ideal of $\Ok$ is invariant under $\Gal(F/\Q_p)$ as well, we see that the modules $\Ok\llbracket G\rrbracket/(g_i^t)$ all have the same $l_0$-invariants, both over $\Ok\llbracket G\rrbracket$ and $\Z_p\llbracket G\rrbracket$. Further, there is a pseudo-isomorphism
\[\Ok\llbracket G\rrbracket/(P^t) \longrightarrow \bigoplus_{i=0}^{s-1} \Ok\llbracket G\rrbracket/(g_i^t).\]
Computing the $l_0$-invariant over $\Z_p\llbracket G\rrbracket$ on both sides gives\[(l_0)_{\Z_p\llbracket G\rrbracket}(\Ok\llbracket G\rrbracket/(P^t))=s(l_0)_{\Z_p\llbracket G\rrbracket}(\Ok\llbracket G\rrbracket/ (g_0^t)) .\]
Similarly we obtain
\[(l_0)_{\Ok\llbracket G\rrbracket}(\Ok\llbracket G\rrbracket/(P^t))=s(l_0)_{\Ok\llbracket G\rrbracket}(\Ok\llbracket G\rrbracket/ (g_0^t)) .\]
As $P\in \Z_p\llbracket G\rrbracket$, it is a direct consequence of the definitions that 
\[(l_0)_{\Ok\llbracket G\rrbracket}(\Ok\llbracket G\rrbracket/(P^t))=(l_0)_{\Z_p\llbracket G\rrbracket}(\Z_p\llbracket G\rrbracket/(P^t)).\]
Using that $\Ok\llbracket G\rrbracket/(P^t)\cong (\Z_p\llbracket G\rrbracket/P^t)^{ef}$ as $\Z_p\llbracket G\rrbracket$-module it follows that 
\[ef(l_0)_{\Ok\llbracket G\rrbracket}(\Ok\llbracket G\rrbracket/(P^t))=(l_0)_{\Z_p\llbracket G\rrbracket}(\Ok\llbracket G\rrbracket/(P^t)),\] 
establishing the claim in this case. 

We now consider the case $F\subseteq \Q_p(\zeta_{qp^n})$ without any additional assumptions.
Let $G'\subseteq \Gal(F/\Q_p)$ be the subgroup that fixes $(g_0)$ as an ideal and let $\Ok'=\Ok^{G'}$. 
Using the fact that $l_0$-invariants are additive along short exact sequences we can use 
\[0\longrightarrow g_0 \Ok\llbracket G\rrbracket /(g_0^t) \Ok\llbracket G\rrbracket \longrightarrow \Ok\llbracket G\rrbracket /(g_0^t) \longrightarrow \Ok\llbracket G\rrbracket /(g_0) \longrightarrow 0\] 
to see that \[(l_0)_{\Z_p\llbracket G\rrbracket}(\Ok\llbracket G\rrbracket/(g_0^t)) = t(l_0)_{\Z_p\llbracket G\rrbracket}(\Ok\llbracket G\rrbracket/(g_0))\] 
for every $t\ge 1$. Let $s$ be minimal such that $(g_0^s)$ is defined already over $\Ok'$. 
Then there is a power series $h'\in\Ok'\llbracket G\rrbracket$ such that $(h')=(g_0^s)$. Since 
\[(l_0)_{\Ok\llbracket G\rrbracket }(\Ok\llbracket G\rrbracket/(h'))=[\Ok:\Ok'](l_0)_{\Ok'\llbracket G \rrbracket }(\Ok\llbracket G\rrbracket /(h')), \] 
a combination of the two previous cases concludes the analysis of the abelian case. 

Let now $\Ok$ be arbitrary and let $\Ok'$ be the maximal cyclotomic subring. We call a prime ideal $(\psi)$ with a generator $\psi\in \Z_p\llbracket G\rrbracket$ special if it generates the ideal of power series vanishing on a $(l-1)$-dimensional $\Z_p$-flat, see \cite[Section~3]{cuoco-monsky}. We call an ideal $(\psi)\subseteq \Ok\llbracket G\rrbracket$ special if $(\psi)$ divides a special prime $(\psi')$ with $\psi'\in \Z_p\llbracket G\rrbracket$. Note that the rings $\Ok\llbracket G\rrbracket $ and $\Ok'\llbracket G\rrbracket $ have the same special primes.

Let $M'\subseteq M$ be the unique maximal submodule that is annihilated by a finite product of special primes. Let $F'$ be the product of all special primes that divide $h$, then $M'=(h/F')M$ is again cyclic and annihilated by $F'$. We get a short exact sequence of $\Ok\llbracket G\rrbracket $-modules
\[0\longrightarrow M'\longrightarrow M\longrightarrow M/M'\longrightarrow 0.\]
For the remaining proof let $N$ denote the norm from $\Ok$ to $\Ok'$. It induces a well defined map $\Ok\llbracket G\rrbracket\longrightarrow \Ok'\llbracket G\rrbracket$.
As a $\Ok\llbracket G\rrbracket $-module the characteristic ideal of $M/M'$ is not divisible by any special prime. By definition $h/F'$ is coprime to every special prime and annihilates $M''$. Thus, $N(h/F')$ is not divisible by any special prime. On the other hand $N(h/F')\in (h/F')\Ok\llbracket G\rrbracket \cap \Z_p\llbracket G\rrbracket $. Thus, the characteristic ideal of $M/M'$ as a $\Z_p\llbracket G\rrbracket $-module is coprime to every special prime. As $l_0$-invariants are additive, it suffices to prove the lemma for $M'$ and $M/M'$ separately. 

Let us first deal with the case of $M'$. By construction $M'\cong \Ok\llbracket G\rrbracket /(F')$. But ${F'\in \Ok'\llbracket G\rrbracket }$, so this case can be established as above. Let us now consider ${M''=M/M'}$, and write ${F'' = h/F'}$.  
Let $W$ be the group of $p$-power roots of unity in $\mathbb{C}_p$ and denote by $W[p^n]$ all elements of order at most $p^n$.
Recall from \cite{cuoco-monsky} that we have a well-defined homomorphism
\[\psi\colon \Z_p\llbracket G\rrbracket \longrightarrow \bigoplus \Z_p[\zeta],\]
where the sum runs over all equivalence classes of $(W^l)[p^n]$ modulo Galois equivalence. This homomorphism is defined as follows. Write an element ${\zeta \in (W^l)[p^n]}$ as ${\zeta = (\zeta_1, \ldots, \zeta_l)}$. Then the component of $\psi(F)$ in $\Z_p[\zeta]$ is defined as 
\[ \psi(F)_\zeta = F(\zeta_1-1, \ldots, \zeta_l - 1) \] 
for each ${F \in \Z_p\llbracket G\rrbracket }$. 

We can define a $\Ok\llbracket G\rrbracket $-version of this by
\[\psi'\colon \Ok\llbracket G\rrbracket \longrightarrow \bigoplus \Ok[\zeta].\]
Note that the right hand side is a finitely generated free $\Ok$-module.
Let $I_n$ be the kernel of this map; note that $I_n$ depends on the concrete choice of representatives of $\zeta$ modulo Galois equivalence. The cokernel of $\psi$ is annihilated by $p^{nl}$ by \cite[Lemma 2.1]{cuoco-monsky}. Consider the ideal $(I_n,F'')$. $I_n$ contains the polynomials $\omega_n(T_i)$ that vanish on $(W^l)[p^n]$. As $F''$ is not divisible by any special primes, it follows that $F''$ is coprime to $(\omega_n(T_1),\ldots, \omega_n(T_l))$. We therefore obtain that 
\[\textup{ht}((F'',I_n))>\textup{ht}(I_n)=l.\] 
In particular, $\rg_{\Z_p}(M''/I_n)=O(p^{n(l-2)})$.
Using \cite[Lemma 2.4]{cuoco-monsky}, we can conclude that 
\[v_p(\vert( M''/I_nM'')[p^\infty]\vert)=v_p(\vert (\bigoplus \Ok[\zeta]/\psi'(F''))[p^\infty]\vert)+O(p^{n(l-1)}). \]
Note that $\Ok[\zeta]/(\zeta-1)$ has $p^{[\Ok:\Ok']}$ elements. Thus, $\Ok[\zeta]/\psi'(F'')[p^\infty]$ has 
\[ p^{[\Ok:\Ok']v_{\zeta-1}(F''(\zeta-1))}\] 
elements. Taking the sum over the equivalence classes of $\zeta$ gives
\[v_p(\vert( M''/I_nM'')[p^\infty]\vert)=[\Ok:\Ok']\sum_\zeta v_{\zeta-1}(F''(\zeta-1))+O(p^{n(l-1)}),\]
where the sum runs over all elements in $(W^l)[p^n]$. Analogously to \cite[Theorem 1.7]{cuoco-monsky}\footnote{The results Cuoco and Monsky use in their proofs are all stated for general coefficient rings and not only for $\Z_p$.} we obtain that 
\[ [\Ok:\Ok']\sum_\zeta v_{\zeta-1}(F''(\zeta-1))=[\Ok:\Ok'](l_0)_{\Ok\llbracket G\rrbracket }(F'')np^{n(l-1)}+O(p^{n(l-1)}).\]

We can also generalise \cite[Theorem 3.4]{cuoco-monsky} (in the same way) to obtain that
\[v_p(\vert( M''/I_nM'')[p^\infty]\vert)=(l_0)_{\Ok'\llbracket G\rrbracket }np^{n(l-1)}+O(p^{n(l-1)}). \]
Comparing the last two equations gives the desired relation between $\Ok\llbracket G\rrbracket $- and $\Ok'\llbracket G\rrbracket $-$l_0$ invariants.

Using the cyclotomic argument for the step from $\Ok'$ to $\Z_p$ gives the claim.
\end{proof}

\subsection{Uniform groups}
It is well-known (see Lemma~\ref{lemma:lazard} below) that each compact $p$-adic Lie group contains a \emph{uniform} normal subgroup of finite index, by which we mean the following. 
\begin{def1} 
  Let $G$ be a finitely generated pro-$p$-group. We write $G_n$, $n \in \N$, for the subgroups in the lower $p$-series of $G$ (see \cite[Definition~1.15]{Dixon}; $G_n$ corresponds to $P_{n+1}(G)$ in the notation from \cite{Dixon}). 
  
  Then $G$ is called \emph{uniform of dimension $l$}, ${l \in \N}$, if $G$ is profinite, topologically generated by $l$ generators, and there exists a filtration 
  \[ G = G_0 \supseteq G_1 \supseteq \ldots \supseteq G_n \supseteq \ldots \] 
  such that each $G_{n+1}$ is normal in $G_n$ and ${G_n/G_{n+1} \cong (\Z/p\Z)^l}$ for every ${n \in \N}$. 
\end{def1} 
If $G$ is uniform of dimension $l$, then ${G_n = G^{p^n}}$ (see \cite[Theorem~3.6]{Dixon}) and therefore ${[G : G_n] = p^{nl}}$ for every ${n \in \N}$. Moreover, each such group $G$ satisfies ${G[p^\infty]= \{0\}}$ by \cite[Theorem~4.5]{Dixon}. 
\begin{lemma} \label{lemma:lazard} 
  Each compact $p$-adic Lie group contains an open normal uniform subgroup. 
\end{lemma}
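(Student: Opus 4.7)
The plan is to reduce the claim to Lazard's structure theorem for compact $p$-adic analytic groups, as developed in the book of Dixon, du Sautoy, Mann and Segal cited throughout Section~\ref{subsection:Iwasawa-modules}. The statement has three ingredients (open, normal, uniform), and I would handle them in that order, exploiting the characteristic nature of the terms of the lower $p$-series.

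First, I would observe that any compact $p$-adic Lie group $G$ contains an open normal pro-$p$ subgroup $H$. This is standard: using the profinite structure, one finds an open normal subgroup $H_0$ of $G$ which is pro-$p$ (for instance by \cite[Corollary~8.34]{Dixon}, or more directly because a compact $p$-adic analytic group is virtually pro-$p$), and one may replace $H_0$ by the intersection of its finitely many $G$-conjugates to make it normal in $G$.

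Next, since $H$ is a finitely generated pro-$p$ group which is $p$-adic analytic, Lazard's theorem in the form given by \cite[Theorem~4.2]{Dixon} (combined with \cite[Theorem~3.6]{Dixon}) tells us that for every sufficiently large ${n \in \N}$, the $n$-th term ${P_n(H) = H^{p^{n-1}}}$ in the lower $p$-series of $H$ is uniform in the sense of the preceding definition. Picking such an $n$, the subgroup ${U := P_n(H)}$ is therefore uniform; it is open in $H$, hence open in $G$.

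Finally, I would argue normality. The lower $p$-series of $H$ is defined purely in terms of the group structure of $H$, so each $P_n(H)$ is a characteristic subgroup of $H$. Since $H$ is normal in $G$, conjugation by any element of $G$ restricts to an automorphism of $H$, which then preserves $U = P_n(H)$; thus $U$ is normal in $G$, completing the proof. The only step that requires genuine input is the existence of a uniform subgroup inside $H$, and this is imported as a black box from \cite{Dixon}; the rest is bookkeeping to guarantee openness and normality in $G$ simultaneously.
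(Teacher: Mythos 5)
Your argument is correct and amounts to unpacking exactly the citation the paper gives: the paper proves the lemma by quoting \cite[Corollary~8.34]{Dixon}, which is precisely this statement, and your sketch is the standard argument underlying that corollary (pass to an open normal pro-$p$ subgroup, take a deep enough term of its lower $p$-series, and use that this term is characteristic to get normality in the ambient group). Aside from the minor circularity of invoking \cite[Corollary~8.34]{Dixon} again for the intermediate step of producing an open normal pro-$p$ subgroup — for which you anyway supply an independent justification — this is the same route as the paper's.
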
 
\begin{proof} 
  This is a classical result of Lazard, see \cite[Corollary~8.34]{Dixon}. 
\end{proof} 
Therefore we will usually reduce to the case where the Galois group 
\[ G = \Gal(K_\infty/K)\] 
of an admissible Lie extension $K_\infty/K$ is a uniform group, using the following 
\begin{lemma} \label{lemma:uniform} 
  Let $G$ be a compact $p$-adic Lie group, and let ${G_0 \subseteq G}$ be a uniform normal subgroup of finite index. We write ${[G : G_0] = x}$. Let $M$ be a finitely generated $\OkG$-module. Then the following statements hold. \begin{compactenum}[(a)] 
     \item $M$ is also finitely generated as a $\Ok\llbracket G_0\rrbracket $-module, and 
     \[ \rg_{\Ok\llbracket G_0\rrbracket }(M) = x \cdot \rg_{\OkG}(M). \] 
     \item $\mu^{(k)}_{\Ok\llbracket G_0\rrbracket }(M) = x \cdot \mu^{(k)}_{\OkG}(M)$ for every ${k \in \N}$. 
  \end{compactenum}
\end{lemma}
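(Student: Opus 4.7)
My plan rests on the observation that $\OkG$ is free of rank $x$ as a $\Ok\llbracket G_0\rrbracket$-module on both sides: any complete system $\{g_1,\ldots,g_x\}$ of coset representatives of $G_0$ in $G$ furnishes a basis (pass to the inverse limit of the decompositions $\Ok[G/U] = \bigoplus_i g_i \cdot \Ok[G_0/(G_0 \cap U)]$ as $U$ runs over the open normal subgroups of $G$), and normality of $G_0$ in $G$ ensures that conjugation by each $g_i$ is an automorphism of $\Ok\llbracket G_0\rrbracket$, so left and right structures are interchangeable. The finite generation claim in (a) is then immediate: any $\OkG$-generating set $m_1,\ldots,m_r$ of $M$ yields the $\Ok\llbracket G_0\rrbracket$-generating set $\{g_i m_j\}_{i,j}$.

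For the rank identity in (a), I will verify that the natural map
\[ \mathcal{F}(G_0) \otimes_{\Ok\llbracket G_0\rrbracket} \OkG \longrightarrow \mathcal{F}(G) \]
is an isomorphism. Write $S = \Ok\llbracket G_0\rrbracket \setminus \{0\}$; the conjugation invariance of $S$ under each $g_i$ (a consequence of normality of $G_0$) is exactly what is needed to verify the two-sided Ore condition for $S$ in $\OkG$. The Ore localization $S^{-1}\OkG$ is then defined, is free of rank $x$ over $\mathcal{F}(G_0) = S^{-1}\Ok\llbracket G_0\rrbracket$ with basis $\{g_1,\ldots,g_x\}$, and is a domain because $\OkG$ is one (via \cite[Theorem~2.3]{coates-howson}); a finite-dimensional domain over a skew field is itself a skew field, so by the universal property $S^{-1}\OkG$ coincides with $\mathcal{F}(G)$. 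Substituting this gives
\[ \mathcal{F}(G) \otimes_{\OkG} M \;\cong\; \mathcal{F}(G_0) \otimes_{\Ok\llbracket G_0\rrbracket} M, \]
and equating $\mathcal{F}(G_0)$-dimensions on both sides (using $[\mathcal{F}(G):\mathcal{F}(G_0)] = x$) yields $\rg_{\Ok\llbracket G_0\rrbracket}(M) = x \cdot \rg_{\OkG}(M)$.

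For part (b), I will apply the identity
\[ \mu^{(k)}(M) \;=\; \sum_{i=0}^{k} \rg_{\F_q\llbracket H\rrbracket }\bigl(\pi^i M[\pi^\infty]/\pi^{i+1}M[\pi^\infty]\bigr) \]
stated in the paragraph preceding this subsection (see also \cite[Lemma~5.8]{kleine-mueller2}) twice, with $H = G$ and $H = G_0$. Each layer $N_i = \pi^i M[\pi^\infty]/\pi^{i+1}M[\pi^\infty]$ is annihilated by $\pi$ and is therefore a $\F_q\llbracket G\rrbracket \cong \OkG/(\pi)$-module; reducing the coset decomposition of $\OkG$ modulo $\pi$ shows that $\F_q\llbracket G\rrbracket $ is again free of rank $x$ over $\F_q\llbracket G_0\rrbracket \cong \Ok\llbracket G_0\rrbracket /(\pi)$, so the argument of part (a) transfers verbatim to this setting and gives $\rg_{\F_q\llbracket G_0\rrbracket }(N_i) = x \cdot \rg_{\F_q\llbracket G\rrbracket }(N_i)$. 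Summing over $i$ from $0$ to $k$ then delivers (b).

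The main obstacle is the Ore-set verification and the identification $S^{-1}\OkG = \mathcal{F}(G)$ in the non-commutative setting: this is the single step where normality of $G_0$ in $G$ is decisively used. Once the equality $\mathcal{F}(G) \cong \mathcal{F}(G_0)\otimes_{\Ok\llbracket G_0\rrbracket}\OkG$ is in place, both parts reduce to routine bookkeeping with the finite free extensions $\Ok\llbracket G_0\rrbracket \subseteq \OkG$ and $\F_q\llbracket G_0\rrbracket \subseteq \F_q\llbracket G\rrbracket $.
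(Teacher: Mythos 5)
Your proof is correct, and the underlying engine is the same as the paper's — both parts ultimately rest on $\OkG$ being free of rank $x$ over $\Ok\llbracket G_0\rrbracket$ and, after reduction mod $\pi$, on $\F_q\llbracket G\rrbracket$ being free of rank $x$ over $\F_q\llbracket G_0\rrbracket$. The differences are matters of execution. For (a) the paper simply asserts that freeness gives the rank identity; you supply the missing justification by constructing $\mathcal{F}(G)$ as the Ore localisation of $\OkG$ at $S=\Ok\llbracket G_0\rrbracket\setminus\{0\}$ and invoking the crossed-product structure $\OkG=\Ok\llbracket G_0\rrbracket * (G/G_0)$ (via normality of $G_0$) to verify the Ore condition. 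That is the right mechanism and makes the step self-contained, though it would be worth writing out the Ore verification in one place rather than gesturing at conjugation-invariance being ``exactly what is needed.'' For (b) the paper first invokes invariance of $\mu^{(k)}$ under pseudo-isomorphism to reduce to an elementary module $\bigoplus_i\OkG/(\pi^{e_i})$ and then compares ranks; you instead apply the layer formula $\mu^{(k)}(M)=\sum_i\rg_{\F_q\llbracket H\rrbracket}(\pi^i M[\pi^\infty]/\pi^{i+1}M[\pi^\infty])$ directly for $H=G$ and $H=G_0$ and scale each graded piece. This sidesteps the pseudo-isomorphism step entirely and is arguably a little cleaner, since the formula only sees $M[\pi^\infty]$ and thus applies without any torsion hypothesis on $M$. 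Both routes are valid and deliver the factor $x$ layer by layer.
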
 
\begin{proof} 
  The first assertion follows since $\OkG$ is a free $\Ok\llbracket G_0\rrbracket $-module of rank $x$. Concerning the second statement, we just note that the truncated $\mu$-invariants of two pseudo-isomorphic $\OkG$-modules are equal by \cite[Proposition~3.34]{venjakob}. It therefore suffices to look at the corresponding elementary $\OkG$-modules, and the assertion follows since $\F_q\llbracket G\rrbracket $ is a free $\F_q\llbracket G_0\rrbracket $-module of rank $x$. 
\end{proof} 

\subsection{Ideal class groups} \label{section:ideal-class-groups} 
For a number field $L$, we denote by $\textup{Cl}(L)$ the ideal class group of $L$, and we let ${Y(L) \subseteq \textup{Cl}(L)}$ be its $p$-primary subgroup. More generally, if $L$ is any (possibly infinite) algebraic extension of a number field $K$, then we define 
\[ \textup{Cl}(L) = \varprojlim_{K \subseteq L' \subseteq L} \textup{Cl}(L') \quad \text{ and } \quad Y(L) = \varprojlim_{K \subseteq L' \subseteq L}Y(L'), \] 
where $L'$ runs over the finite subextensions of $L$ containing $K$. Here the projective limits are taken with respect to the natural norm maps. 

If $L$ is a number field and $\Sigma$ is a finite set of primes of $L$, then we denote by $\textup{Cl}_\Sigma(L)$ and $Y_\Sigma(L)$ the quotients of $\textup{Cl}(L)$ and $Y(L)$ by the subgroups which are generated by the primes in $\Sigma$. For a (possibly infinite) algebraic extension $L$ of a number field $K$ and a finite set $\Sigma$ of primes of $K$, we let 
\[ \textup{Cl}(L) = \varprojlim_{K \subseteq L' \subseteq L} \textup{Cl}_{\Sigma(L')}(L') \quad \text{ and } \quad Y(L) = \varprojlim_{K \subseteq L' \subseteq L} Y_{\Sigma(L')}(L'). \] 
By class field theory, one has a natural isomorphism 
\[ Y_\Sigma(L) \cong \Gal(H_\Sigma(L)/L), \] 
where $H_\Sigma(L)$ denotes the maximal unramified abelian pro-$p$-extension of $L$ in which each prime in $\Sigma(L)$ is totally split. 

If $L/K$ is normal with Galois group $G$, then $Y(L) \otimes_{\Z_p} \Ok$ is an $\OkG$-module.

\subsection{Fine Selmer groups of $p$-adic Galois representations} \label{section:fine-Selmer-groups} 
For any discrete $\Z_p$-module $M$, we denote the \emph{Pontryagin dual} of $M$ by 
\[ M^\vee = \textup{Hom}_{\textup{cont}}(M, \Q_p/\Z_p). \] 

If $L$ is any number field and $v$ is a prime of $L$, then we denote by $L_v$ the completion of $L$ at $v$. Fixing an algebraic closure $\overline{L}$ of $L$, we denote by ${G_L = \Gal(\overline{L}/L)}$ the absolute Galois group. For any $G_L$-module $M$, we let ${H^i(L,M) = H^i(G_L,M)}$ denote the corresponding cohomology groups, ${i \in \N}$. Moreover, if $L'/L$ is a normal algebraic extension, then we abbreviate $H^i(\Gal(L'/L), M)$ to $H^i(L'/L,M)$ for any $\Gal(L'/L)$-module $M$.  

Now fix a number field $K$, and let $V$ be a $p$-adic Galois representation of $G_K$. If ${p = 2}$, then we assume $K$ to be totally imaginary. As in Section~\ref{subsection:general}, we fix a lattice $T$ in $V$ and we let ${A = V/T}$. For any algebraic extension $L$ of $K$, we denote by $A(L)$ the maximal submodule of $A$ on which $\Gal(\overline{K}/L)$ acts trivially. Moreover, for any prime $v$ of $L$, we denote by ${A(L_v) \subseteq A}$ the maximal submodule on which the local absolute Galois group $G_{L_v}$ acts trivially (here $G_{L_v}$ embeds into the absolute Galois group $G_L$ canonically). 

Following \cite{coates-sujatha}, we now define the ($\pi$-primary part of the) \emph{fine Selmer group} of $A$ over $K$ as 
\[ \Sel_{0,A}(K) = \ker \left( H^1(K,A) \longrightarrow \prod_v H^1(K_v, A) \right). \] 
It is often better to work with the following definition, which is equivalent to the one given above under a certain natural hypothesis (which we will describe below). 

We fix a finite set $\Sigma$ of primes of $K$ which contains $\Sigma_p$ and $\Sigma_{\textup{ram}}(A)$, and we let $K_\Sigma$ be the maximal algebraic extension of $K$ which is unramified outside of $\Sigma$. Then we define 
\[ \Sel_{0,A, \Sigma}(K) = \ker \left( H^1(K_\Sigma/K, A) \longrightarrow \prod_{v \in \Sigma} H^1(K_v, A) \right). \] 
For any (not necessarily finite) extension ${L \subseteq K_\Sigma}$ of $K$, we let 
\[ \Sel_{0,A,\Sigma}(L) = \varinjlim_{K \subseteq L' \subseteq L} \Sel_{0,A, \Sigma(L')}(L'), \] 
where ${L' \subseteq L}$ runs over the finite subextensions. Note that ${K_\Sigma = L'_{\Sigma(L')}}$ for each such $L'$, since ${L \subseteq K_\Sigma}$. Therefore each $\Sel_{0, A, \Sigma(L')}(L')$ is contained in $H^1(K_\Sigma/K,A)$. 

It follows from results of Sujatha and Witte (see \cite[Section~3]{sujatha-witte}) that the second definition becomes independent of the choice of $\Sigma$ as soon as $L$ contains the cyclotomic $\Z_p$-extension $K_\infty^c$ of $K$; in fact, in this situation we have 
\[ \Sel_{0,A, \Sigma}(L) = \Sel_{0,A}(L)\] 
for each finite set $\Sigma$ containing $\Sigma_p$ and $\Sigma_{\textup{ram}}(A)$. Going through the arguments from \cite[Section~3]{sujatha-witte}, one derives that more generally, the following result holds: 
\begin{lemma} \label{lemma:sujatha-witte} 
  Using the above notation, suppose that ${L \subseteq K_\Sigma}$ contains a $\Z_p$-extension ${K_\infty}$ of $K$ such that no prime ${v \in \Sigma}$ is completely split in $K_\infty/K$. Then 
  \[ \Sel_{0,A, \Sigma}(L) = \Sel_{0,A}(L)\] 
  does not depend on the choice of the set $\Sigma$. 
\end{lemma}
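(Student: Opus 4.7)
My strategy is to reduce the identity $\Sel_{0,A,\Sigma}(L) = \Sel_{0,A}(L)$ to a local cohomological vanishing, which I would derive from the hypothesis by exploiting the $\Z_p$-structure of $K_\infty$ inside $L$. The containment $\Sel_{0,A}(L) \subseteq \Sel_{0,A,\Sigma}(L)$ is immediate: at each finite subextension $L' \subseteq L$, a class in $\Sel_{0,A}(L')$ is unramified at every prime, hence lies in $H^1(K_\Sigma/L', A)$, and trivialises at every $v \in \Sigma(L')$. For the reverse inclusion, given $c \in \Sel_{0,A,\Sigma}(L)$ and a prime $w$ of $L$ lying above some $v \notin \Sigma$, the fact that $c$ is realised in $H^1(K_\Sigma/L, A)$ forces its restriction $c|_w$ to lie in the unramified subgroup $H^1_{\textup{unr}}(L_w, A) \subseteq H^1(L_w, A)$; the problem thus reduces to showing that this unramified cohomology vanishes.

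For the local vanishing, I would proceed as follows. The inclusion $K_\infty \subseteq K_\Sigma$ forces $\Sigma_{\textup{ram}}(K_\infty/K) \subseteq \Sigma$, so any $v \notin \Sigma$ is unramified in $K_\infty/K$; by hypothesis on $\Sigma$ one also has $v \nmid p$ and $A$ unramified at $v$. If $v$ is not completely split in $K_\infty/K$, then $K_{\infty,w_0}/K_v$ (with $w_0 = w \cap K_\infty$) is a nontrivial unramified $\Z_p$-subextension of $K_v^{\textup{unr}}/K_v$. For $v \nmid p$, the unique $\Z_p$-subextension of $K_v^{\textup{unr}}/K_v$ is the maximal unramified pro-$p$ extension of $K_v$, so $K_{\infty,w_0}$ must coincide with it. Consequently $\Gal(K_v^{\textup{unr}}/K_{\infty,w_0}) \cong \prod_{\ell \neq p} \Z_\ell$ is a pro-$p'$ group; since $L_w \supseteq K_{\infty,w_0}$, the same holds for the closed subgroup $\Gal(L_w^{\textup{unr}}/L_w)$, and the $H^1$ of a pro-$p'$ group on the $p$-primary module $A$ vanishes.

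To deduce the lemma, I would compare $\Sel_{0,A,\Sigma}(L)$ with $\Sel_{0,A,\Sigma'}(L)$ for pairs $\Sigma \subseteq \Sigma'$ \emph{both} satisfying the hypothesis of the lemma: every $v \in \Sigma' \setminus \Sigma$ then lies in $\Sigma'$, hence is not completely split in $K_\infty/K$, and the local vanishing above shows that imposing the extra local condition at primes of $L$ above $v$ is automatic for classes already sitting in $H^1(K_\Sigma/L, A)$. Running $\Sigma'$ through a cofinal system of such sets then yields both the independence of $\Sel_{0,A,\Sigma}(L)$ from the choice of $\Sigma$ and its identification with $\Sel_{0,A}(L)$. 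The main obstacle I anticipate is the treatment of primes $v \notin \Sigma$ that \emph{are} completely split in $K_\infty/K$, since such $v$ cannot be adjoined to $\Sigma$ without violating the hypothesis and the local vanishing fails for them directly; following the arguments of \cite[Section~3]{sujatha-witte}, one must show by a more delicate colimit argument -- using that $L$ contains the full $\Z_p$-extension $K_\infty$ and not merely a finite-level approximation -- that the contribution of such primes to the direct limit defining $\Sel_{0,A,\Sigma}(L)$ is already captured by the fine Selmer group over $L$.
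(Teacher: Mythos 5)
Your reduction to vanishing of local unramified cohomology is correct, and the key calculation --- that for $v \nmid p$ unramified outside $\Sigma$ and not completely split in $K_\infty/K$, the group $\Gal(L_w^{\textup{unr}}/L_w)$ is pro-$p'$ and hence $H^1_{\textup{unr}}(L_w, A) = 0$ --- is exactly the engine behind the Sujatha--Witte argument that the paper cites. However, your final paragraph is not a proof: you correctly identify that this argument does not cover primes $v \notin \Sigma$ that are completely split in $K_\infty/K$, but you then only gesture at ``a more delicate colimit argument'' without supplying one. This gap is genuine, and a colimit trick alone cannot close it. If such a prime $v \notin \Sigma$ is completely split in $L/K$ (which can occur when $K_\infty$ is a non-cyclotomic $\Z_p$-extension in which $v$ splits completely and $L/K_\infty$ does not repair this), then the local fields $L'_w$ stabilize at $K_v$ along the tower of finite subextensions, so $H^1_{\textup{unr}}(L'_w, A) \cong A/(\textup{Frob}_v - 1)A$ does not change as $L'$ grows and is in general nonzero. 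The inclusion $\Sel_{0,A,\Sigma}(L) \subseteq \Sel_{0,A}(L)$ can therefore fail at precisely this step.

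What your analysis exposes is that the hypothesis in the lemma points in the wrong direction for the argument you (correctly) want to run: the reverse inclusion requires non-complete-splitness at primes \emph{outside} $\Sigma$, whereas the stated hypothesis constrains only $v \in \Sigma$. In every application of this lemma in the paper, $L$ contains the cyclotomic $\Z_p$-extension of $K$, in which every finite prime is finitely decomposed, so the stronger condition holds automatically and your argument closes. As written, though, your proof does not establish the lemma in the generality in which it is stated; the mismatch appears to trace to an imprecision in the lemma's hypothesis rather than to a missed idea on your part, and the honest fix is to state and use the condition that no prime of $K$ lying outside the minimal admissible $\Sigma$ (or simply no prime of $K$ at all) is completely split in $L/K$.
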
 

Now fix a set $\Sigma$ as above, and let $K_\infty/K$ be an admissible $p$-adic Lie extension in the sense of Section~\ref{subsection:Iwasawa-modules}. We suppose that ${\Sigma_{\textup{ram}}(K_\infty/K) \subseteq \Sigma}$. Then we have defined above the fine Selmer group 
\[ \Sel_{0,A, \Sigma}(K_\infty) = \varinjlim_{K \subseteq L' \subseteq K_\infty} \Sel_{0,A, \Sigma(L')}(L') \] 
(here again $L'$ runs over the finite subextensions of $K_\infty$), and we let 
\[ Y_{A, \Sigma}^{(L')} = \Sel_{0,A, \Sigma(L')}(L')^\vee \] 
be the corresponding Pontryagin duals. Finally, the projective limit 
\[ Y_{A, \Sigma}^{(K_\infty)} = \varprojlim_{K \subseteq L' \subseteq K_\infty} Y_{A, \Sigma}^{(L')} = \Sel_{0,A, \Sigma}(K_\infty)^\vee \] 
is taken with respect to the corestriction maps. 

It is well-known that $Y_{A, \Sigma}^{(K_\infty)}$ is a finitely generated $\Ok\llbracket G\rrbracket $-module, where we let ${G = \Gal(K_\infty/K)}$. 

Finally, for any $k \in \N$, we define \emph{$k$-truncated fine Selmer groups} as follows. For the number field $K$ and a finite set $\Sigma$ of primes of $K$, we let 
\[ \Sel_{0, A[\pi^k], \Sigma}(K) = \ker \left( H^1(K, A[\pi^k]) \longrightarrow \prod_{v \in \Sigma} H^1(K_v, A[\pi^k])\right). \] 
Similarly, if ${L \subseteq K_\Sigma}$ is an algebraic extension of $K$, then we let 
\[ \Sel_{0, A[\pi^k], \Sigma}(L) = \varinjlim_{K \subseteq L' \subseteq L} \Sel_{0, A[\pi^k], \Sigma(L')}(L'), \] 
where as usual $L'$ runs over the finite subextensions of $L$. 

\subsection{Modular forms} 
Let $f$ be a normalised newform of level $N$, weight $k$ and Nebentypus $\chi$. We denote by $L(f,s)=\sum_{n\in \N}a_nn^{-s}$ the $L$-function associated to $f$. Let $F^{(f)}$ be the smallest extension of $\Q$ that contains all the coefficients $a_n$. Let $v$ be a prime above $p$ in $F^{(f)}$ and denote by ${F = F_v}$ the completion of $F^{(f)}$ at $v$. Deligne associated a two-dimensional $F_v$-representation of $G_\Q$ to this modular form. We denote the representation space by $V_v$. We will always choose a Galois invariant lattice $T_f\subseteq V_f$ and denote the quotient $V_f/T_f$ by $A_f$. Let $\mathcal{O}$ be the ring of integers of ${F = F_v}$, as usual. Then $A_f$ is isomorphic to $(F_v/\mathcal{O})^2$ as $\mathcal{O}$-module. 

In the case that $f$ is the modular form attached to an elliptic curve defined over $\Q$, the field $F_v$ is isomorphic to $\Q_p$ and we can choose the lattice $T_f$ such that $A_f(1)\cong E[p^\infty]$, where $(1)$ denotes the Tate-twist. In analogy to elliptic curves with complex multiplication we define: 
\begin{def1} \label{def:CM-Form} 
We say that $f$ has complex multiplication if there exist an imaginary quadratic field $K$ and a  Hecke character $\psi$ of $K$ with infinity type\footnote{Here we use the same convention of infinity type as Kato in \cite{kato}.} $(1-k,0)$ such that
\[L(f,s)=L(\psi,s).\]
\end{def1}
Note that the conductor of $\psi$ divides the level $N$ of $f$.
Let $L$ be the smallest field containing $K$ and all the values of $\psi$. It is immediate that $F^{(f)} \subseteq L$. We fix once and for all a prime $w$ above $v$ in $L$. We denote by $V_{L_w}(\psi)$ a one-dimensional $V_{L_w}$-vector space on which $\Gal(\overline{K}/K)$ acts via $\psi$. Let $\mathfrak{f}$ be the conductor of $\psi$. Then the Galois action of ${G_K = \Gal(\overline{K}/K)}$ on $V_{L_w}(\psi)$ factors through the ray class field $K(\mathfrak{f}p^\infty)$. We can turn $V_{L_w}(\psi)$ into a $\Gal(\overline{K}/\Q)$-representation by choosing a lift $\iota$ of the complex conjugation to $\Gal(\overline{K}/\Q)$ and looking at the representation space 
\[V_{L_w}(\psi)\oplus \iota V_{L_w}(\psi). \]
An element $\sigma\in \Gal(\overline{K}/\Q)$ sends a tuple $(x,\iota y)$ to $(\sigma x, \iota (\iota \sigma\iota)y)$, an element of the form $\iota \sigma$ sends $(x,\iota y)$ to $((\iota \sigma\iota)y, \iota \sigma x)$. Let $\Tilde{K}$ be a finite extension of $K$. From the definition it is obvious that $A_f^{\Gal(\overline{K}/\Tilde{K})}$ is finite. Furthermore, as $\Gal(\overline{K}/\Q)$-module $A_f$ is irreducible.
 
By \cite[Lemma 15.11]{kato} we see that
\[V_f\otimes L_w\cong V_{L_w}(\psi)\oplus \iota V_{L_w}(\psi).\]
\begin{lemma}
\label{lemm:ground-field-cm}
  Let $f$ be a CM form in the sense of Definition~\ref{def:CM-Form}. Then we can choose a finite extension $K'$ of $K$ such that the $\Gal(\overline{\Q}/K')$-representation $V_f\otimes K'_w$ (here $w$ is a suitable place above $p$ in $K'$) is unramified outside of the primes above $p$. Furthermore we can choose $K'$ such that $K'(A_f)$ is a $\Z_p^2$-extension and such that $A_f[p]$ is defined over $K'$.
\end{lemma}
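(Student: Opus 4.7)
The plan is to enlarge $K$ in two stages and then exploit linear independence of the two characters appearing in the decomposition. Writing $\psi^\iota(\sigma) := \psi(\iota\sigma\iota)$ for the second character in the already-established decomposition $V_f \otimes L_w \cong V_{L_w}(\psi) \oplus \iota V_{L_w}(\psi)$, this is a Hecke character of $K$ of conductor $\overline{\mathfrak{f}}$. In the first stage I would let $\mathfrak{m}$ be the prime-to-$p$ part of $\mathfrak{f}\overline{\mathfrak{f}}$ and set $K_1 = K(\mathfrak{m}) \cdot F^{(f)}$. Class field theory then gives that both $\psi|_{G_{K_1}}$ and $\psi^\iota|_{G_{K_1}}$ are unramified outside primes above $p$, and since ramification of a Galois representation is insensitive to extension of the coefficient field, this forces $V_f \otimes (K_1)_w$ to be unramified outside primes above $p$ for any prime $w$ of $K_1$ above $p$.

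In the second stage, I would set $K' = K_1(A_f[p])$, a finite extension, so that $A_f[p] \subseteq A_f(K')$ holds. Adjoining $A_f[p]$ forces both restricted characters $\psi|_{G_{K'}}$ and $\psi^\iota|_{G_{K'}}$ to take values in the principal units $1 + \pi_{L_w}\mathcal{O}_{L_w}$ (for odd $p$ this is already torsion-free; for $p=2$ a further finite enlargement absorbs any remaining $2$-torsion). Consequently $\Gal(K'(A_f)/K')$ is identified with the image of $G_{K'}$ under the combined map into $(1 + \pi_{L_w}\mathcal{O}_{L_w})^2$, hence a closed torsion-free pro-$p$ abelian subgroup, i.e.\ isomorphic to $\Z_p^r$ for some $r \le 2$.

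The main obstacle is verifying $r = 2$, rather than $r = 0$ or $r = 1$. I would argue this by pointing to the distinct infinity types $(1-k,0)$ and $(0,1-k)$ of $\psi$ and $\psi^\iota$: these are $\Z_p$-linearly independent characters of infinite order on $G_K$, and they remain so on the finite-index subgroup $G_{K'}$, since passing to a finite-index subgroup can only change the $\Z_p$-rank of the image by a finite amount and both characters have infinite-rank image individually by the weight-$k$ assumption $k \ge 2$. Therefore the combined image has $\Z_p$-rank exactly $2$, giving $\Gal(K'(A_f)/K') \cong \Z_p^2$ as required.
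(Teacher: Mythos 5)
Your plan — pass to a ray class field to kill tame ramification, then adjoin $A_f[p]$ — matches the paper's construction, where $K'$ is taken to contain $L\cdot K(\mathfrak{f}\overline{\mathfrak{f}}p)$. But two steps in your argument for $\Gal(K'(A_f)/K')\cong\Z_p^2$ do not hold up as written.

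First, the bound $r\le 2$ does not follow from the image being a closed subgroup of $(1+\pi_{L_w}\mathcal{O}_{L_w})^2$: that group is open in $\mathcal{O}_{L_w}^2$ and hence has $\Z_p$-rank $2[L_w:\Q_p]$, which in general exceeds $2$. The correct source of the bound (and what the paper uses) is class field theory for the imaginary quadratic field $K$: the combined character $(\psi,\psi^\iota)$ factors through $\Gal\bigl(K(\mathfrak{f}\overline{\mathfrak{f}}p^\infty)/K\bigr)$, and for an imaginary quadratic base this Galois group is $\Delta\times\Z_p^2$ with $\Delta$ finite (the $\Z_p$-rank is $r_2+1=2$). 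It is this finiteness of the ray class quotient, not the shape of the coefficient ring $\mathcal{O}_{L_w}$, that caps the rank at $2$.

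Second, the argument for $r=2$ is garbled. Each of $\psi|_{G_{K'}}$ and $\psi^\iota|_{G_{K'}}$ has image of $\Z_p$-rank exactly $1$ (they are continuous characters valued in a finite extension of $\Q_p$), so the phrase ``infinite-rank image'' is not correct, and ``$\Z_p$-linearly independent characters'' is not defined nor deduced from the distinct infinity types $(1-k,0)$ and $(0,1-k)$. What you actually need is that $\psi\cdot\psi^\iota$ (of infinity type $(1-k,1-k)$, a twist of the norm) and $\psi\cdot(\psi^\iota)^{-1}$ (of infinity type $(1-k,k-1)$, anticyclotomic) each have $\Z_p$-rank-$1$ image on $G_{K'}$ — here $k\ge 2$ is essential so that neither infinity type vanishes — and that these two lines inside $\Z_p^2$ are transverse because one is invariant and the other anti-invariant under complex conjugation. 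That is the step that actually forces $r=2$, and it should be made explicit; the version you wrote about finite-index subgroups changing ranks by a ``finite amount'' says nothing.
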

\begin{proof}
  Note that $\Gal(K(\mathfrak{f}p^\infty)/K)\cong \Delta \times \Z_p^2$ for some finite abelian group $\Delta$. Thus, $L(A_f)\subseteq LK(\mathfrak{f}\overline{\mathfrak{f}}p^\infty)$. Note that $K(\mathfrak{f}\overline{\mathfrak{f}}p^\infty)/K(\mathfrak{f}\overline{\mathfrak{f}}p)$ is a $\Z_p^2$-extension ( in the case that $p=2$ one might have to look at $K(\mathfrak{f}\overline{\mathfrak{f}}p^\infty)/K(\mathfrak{f}\overline{\mathfrak{f}}p^2)$) and that this extension is unramified outside of $p$. Let $K'=LK(\mathfrak{f}\overline{\mathfrak{f}}p)$. Then $\psi$ factors through $\Gal(LK(\mathfrak{f}p^\infty)/K')$. As $K'/K(\mathfrak{f}p)$ is a finite extension, it follows that $LK(\mathfrak{f}p^\infty)/K'$ is a $\Z_p^2$-extension and $K'$ satisfies both conditions of the lemma.
\end{proof}
In particular, the trivialising extension $K'(A_f)$ contains the cyclotomic $\Z_p$-extension of $K'$ since ${A_f[p^\infty] \subseteq K'(A_f)}$, and therefore Lemma~\ref{lemma:sujatha-witte} implies that the fine Selmer group of $A_f$ over $K'(A_f)$ does not depend on the choice of the finite set $\Sigma$. 

\subsection{Rankin-Selberg convolutions}
Let $f_1, \ldots, f_k$ be normalised newforms with corresponding Galois representations ${V_{f_1}, \ldots, V_{f_k}}$. As before we fix Galois stable lattices ${T_{f_1}, \ldots, T_{f_k}}$, respectively. Assume that all representations are defined over the same field $L$. Then the Rankin-Selberg L-function $L(f_1 \times \ldots \times f_k, s)$ is also defined over the field $L$ (see \cite[Theorem~1.6.3]{bump}), and we can consider the $2k$-dimensional representation ${V_{f_1}\otimes \ldots \otimes V_{f_k}}$ with the $\Ok$-stable lattice ${T_{f_1} \otimes \ldots \otimes T_{f_k}}$. An easy computation in tensor products shows that 
\[(V_{f_1} \otimes \ldots \otimes V_{f_k})/(T_{f_1} \otimes \ldots \otimes T_{f_k})\cong V_{f_1}/T_{f_1}\otimes \ldots \otimes V_{f_k}/T_{f_k}=A_{f_1} \otimes \ldots \otimes A_{f_k}.\]

\begin{lemma} \label{lemma:spielweise} 
  Assume that $f_1, \ldots, f_k$ are $CM$-forms with respect to the imaginary quadratic fields ${K_1, \ldots, K_k}$. Then there is a finite extension $K'$ of ${K_1 \cdot \ldots \cdot K_k}$  such that the $\Gal(\overline{\Q}/K')$-representation ${V_{f_1} \otimes \ldots \otimes V_{f_k} \otimes K'_w}$ (here $w$ is a suitable place above $p$ in $K'$) is unramified outside of the primes above $p$. 
  
  Furthermore, we can choose $K'$ such that ${K'(A_{f_1} \otimes \ldots \otimes A_{f_k})}$ is contained in a $\Z_p^{k+1}$-extension of $K'$ and such that $(A_{f_1}\otimes \ldots \otimes A_{f_k})[p]$ is defined over $K'$. 
  
  If the imaginary quadratic number fields $K_1, \ldots, K_k$ are linearly independent (i.e. the Galois group over $\Q$ of their compositum has $2$-rank equal to $k$), then $K'(A_{f_1} \otimes \ldots \otimes A_{f_k})/K'$ is a $\Z_p^{k+1}$-extension. 
\end{lemma}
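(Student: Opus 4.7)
The plan is to apply Lemma~\ref{lemm:ground-field-cm} to each modular form $f_i$ individually, take a compositum to define $K'$, and then analyse the trivialising extension using the fact that the individual $\Z_p^2$-extensions all share a common cyclotomic direction. First, for each $i$, let $K_i'/K_i$ be as provided by Lemma~\ref{lemm:ground-field-cm}: so $V_{f_i}$ is unramified outside $p$ over $K_i'$, the trivialising extension $K_i'(A_{f_i})/K_i'$ is a $\Z_p^2$-extension, and $A_{f_i}[p]$ is defined over $K_i'$. Setting $K' := K_1' \cdots K_k'$, the ramification locus of a tensor product of Galois representations is contained in the union of the ramification loci of the factors, so $V_{f_1} \otimes \cdots \otimes V_{f_k}$ is unramified over $K'$ outside the primes above $p$. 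For the $p$-torsion assertion I would identify $(A_{f_1} \otimes \cdots \otimes A_{f_k})[\pi^e]$ with $A_{f_1}[\pi^e] \otimes_\Ok \cdots \otimes_\Ok A_{f_k}[\pi^e]$, where $e$ is the ramification index of $p$ in $F/\Q_p$; since $G_{K'}$ is contained in each $G_{K_i'}$ and acts diagonally on the tensor product, it acts trivially on this $p$-torsion.

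To show that $K'(A_{f_1} \otimes \cdots \otimes A_{f_k})$ lies in a $\Z_p^{k+1}$-extension of $K'$, I would set $L_i := K' \cdot K_i'(A_{f_i})$ and $L := L_1 \cdots L_k$. The diagonal Galois action on the tensor product yields $K'(A_{f_1} \otimes \cdots \otimes A_{f_k}) \subseteq L$. Each $L_i/K'$ sits inside a $\Z_p^2$-extension of $K'$ pulled back from $K_i'$. By the remark directly following Lemma~\ref{lemm:ground-field-cm}, every $K_i'(A_{f_i})$ contains the cyclotomic $\Z_p$-extension of $K_i'$, hence each $L_i$ contains the cyclotomic $\Z_p$-extension $K'^c$ of $K'$, and $L_i/K'^c$ is at most a $\Z_p$-extension (the anticyclotomic direction attached to $K_i$). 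Consequently $\Gal(L/K'^c) \hookrightarrow \Z_p^k$ and $\Gal(L/K') \hookrightarrow \Z_p^{k+1}$. After possibly enlarging $K'$ by a finite extension to eliminate any residual torsion, $L$, and therefore also $K'(A_{f_1} \otimes \cdots \otimes A_{f_k})$, is contained in a $\Z_p^{k+1}$-extension of $K'$.

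For the final assertion, assume the $K_i$ are linearly independent over $\Q$. Over the compositum $K_1 \cdots K_k$ each $V_{f_i}$ base-changed to $L_w$ decomposes as $V_{L_w}(\psi_i) \oplus V_{L_w}(\psi_i^{\iota_i})$, so $V_{f_1} \otimes \cdots \otimes V_{f_k}$ splits into $2^k$ one-dimensional characters of the form $\bigotimes_i \chi_{\varepsilon_i}$ with $\chi_{\varepsilon_i} \in \{\psi_i, \psi_i^{\iota_i}\}$. The character $\psi_1 \otimes \cdots \otimes \psi_k$ has infinity type $\bigl(k - \sum_i k_i,\, 0\bigr)$ and hence nontrivial cyclotomic projection, while varying the $\iota$-choices detects each of the $k$ anticyclotomic directions separately. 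Putting this together, the fixed field of the representation, $K'(A_{f_1} \otimes \cdots \otimes A_{f_k})$, will realise all $k+1$ independent $\Z_p$-directions. The main obstacle is precisely this last step: I have to rule out accidental collapses in the character decomposition (verifying that no product $\bigotimes_i \chi_{\varepsilon_i}$ factors through a strictly smaller subextension than the one predicted by its infinity type) and to translate linear independence of the $K_i$ into linear disjointness of their anticyclotomic $\Z_p$-extensions over $K'$, possibly after one more finite enlargement of $K'$ to absorb torsion in the Galois groups.
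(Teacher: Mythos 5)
Your argument for the first two assertions matches the paper's: apply Lemma~\ref{lemm:ground-field-cm} componentwise, take $K'$ to be the compositum of the $K_i'$, and observe that $K'(A_{f_1}\otimes\cdots\otimes A_{f_k})$ lies inside the compositum of the shifted $\Z_p^2$-extensions, which is at most a $\Z_p^{k+1}$-extension because all $k$ of them share the cyclotomic $\Z_p$-direction. That part is fine.

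The gap is in the last assertion, and you flag it yourself. You need to show that when the $K_i$ are linearly independent, the $k$ anticyclotomic $\Z_p$-directions (shifted up to $K'$) remain $\Z_p$-linearly independent in $\Gal(L/K'^c)$, so that $\Gal(L/K')$ has rank exactly $k+1$. Your proposed route via infinity types of the characters $\bigotimes_i\chi_{\varepsilon_i}$ does not do this: infinity types are archimedean data and cannot distinguish whether, say, the anticyclotomic $\Z_p$-extension attached to $K_1$ becomes contained in the compositum of the anticyclotomic extensions attached to $K_2,\ldots,K_k$ after base change to $K'$. This is a genuine arithmetic question about Galois groups of $p$-ramified abelian extensions, not a question about Hodge types; the observation that $\psi_1\cdots\psi_k$ has nontrivial cyclotomic projection is correct but only recovers the one cyclotomic direction you already have, and varying the $\iota$-choices gives you $2^k$ characters without any automatic independence of their kernels over $K'$.

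The paper closes this gap with a Chebotarev-density argument that has no analogue in your sketch. For each fixed $i$, one uses linear independence of the $K_j$ (i.e.\ $\Gal(K_1\cdots K_k/\Q)\cong(\Z/2\Z)^k$) to find a prime $q\ne p$ that splits in $K_i$ and is inert in every $K_j$ with $j\ne i$. An inert prime splits completely in the anticyclotomic $\Z_p$-extension $K_j^a$ (ring class field theory, \cite[Chapter~13]{lang_cyclotomic}), so $q$ is totally split in the compositum of the shifts of $K_j^a$ for $j\ne i$; whereas a split prime has finite decomposition in $K_i^a$ by \cite[Theorem~2]{brink}, so $q$ is not totally split in the shift of $K_i^a$. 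Hence the shift of $K_i^a$ cannot lie in that compositum, and an induction over $i$ gives the full $\Z_p^{k+1}$. Without this prime-splitting input (or something equally arithmetic, e.g.\ a direct computation in the global reciprocity map), the accidental collapses you worry about cannot be excluded, and the proof as written does not go through.
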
 
\begin{proof} 
Choose $K_j'$, ${j \in \{1, \ldots, k\}}$, such that these fields satisfy the conclusion of Lemma \ref{lemm:ground-field-cm} for the modular forms ${f_1, \ldots, f_k}$, respectively. Take $K'$ to be the compositum of ${K'_1, \ldots, K_k'}$. Then ${K'(A_{f_1}\otimes \ldots \otimes A_{f_k})}$ is contained in the compositum of the $\Z_p^2$-extensions of ${K_1, \ldots, K_k}$. This is at most a $\Z_p^{k+1}$-extension since the cyclotomic $\Z_p$-extension is contained in each of the $\Z_p^2$-extensions. 

Now suppose that $K_1, \ldots, K_k$ are linearly disjoint. We denote the  corresponding \emph{anti-cyclotomic} $\Z_p$-extensions by $K_1^a, \ldots, K_k^{a}$. For any ${i \in \{1, \ldots, k\}}$ and any $\Z_p$-extension $L/K_i$ we call the $\Z_p$-extension ${L \cdot K' / K'}$ the \emph{shift} of $L$ to $K'$. Now we use an auxiliary 
\begin{lemma} 
  For each ${i \in \{1, \ldots, k\}}$, the shift of the anti-cyclotomic $\Z_p$-extension $K_i^a$ is not contained in the composite of the shifts of the $K_j^a$, $j \ne i$. 
\end{lemma} 
\begin{proof} 
  Fix ${i \in \{1, \ldots, k\}}$. Let $q \ne p$ be a prime number which splits in $K_i/\Q$ and is inert in $K_j$ for every ${j \ne i}$ (the existence of infinitely many such primes follows from the Chebotarev density theorem). Then $q$ will be totally split in the anticyclotomic $\Z_p$-extension $K_j^a$ of $K_j$ for every $j \ne i$ (see \cite[Chapter~13, Theorem~5.2]{lang_cyclotomic}), and therefore $q$ is totally split in the compositum of all these $\Z_p$-extensions over $K'$. On the other hand, the two primes of $K_i$ above $q$ will split into finitely many primes of $K_i^a$, by \cite[Theorem~2]{brink}. This shows that the shift of the anti-cyclotomic $\Z_p$-extension $K_i^a$ cannot be contained in the above compositum. 
\end{proof} 
Now we return to the proof of Lemma~\ref{lemma:spielweise}. Using an inductive argument, it is easy to conclude that the composite of the shifts of the $\Z_p^2$-extensions of $K_1, \ldots, K_k$ is a $\Z_p^{k+1}$-extension of $K'$ (note that the cyclotomic and the anticyclotomic $\Z_p$-extensions are the only $\Z_p$-extensions of $K_i$ which are normal over $\Q$, and consider the eigenspaces of complex conjugation on the Galois group of the maximal abelian $p$-ramified pro-$p$-extension of $K'$). 
\end{proof}

\section{Auxiliary results} \label{section:auxiliary-results} 

The following result on the structure of Iwasawa modules is taken from \cite{kleine-mueller2}. Recall the definition of the truncated $\mu$-invariants $\mu^{(k)}$ from Section~\ref{subsection:Iwasawa-modules}. 
\begin{thm} \label{thm:iwasawamodules} 
   Let $G$ be a uniform $p$-adic Lie group of dimension $l$. Let $M$ be a finitely generated $\OkG$-module, and let ${k \in \N}$. For any ${n \in \N}$, we denote by $M_{G_n}$ the maximal quotient of $M$ on which ${G_n = G^{p^n}}$ acts trivially. 
   
   Then 
   \[ |v_p(|M_{G_n}/\pi^k|) - (\rg_{\OkG}(M) \cdot f k + \mu^{(k)}(M) \cdot f ) \cdot p^{nl}| = O(k p^{n(l-1)}). \] 
   Here the implicit $O$-constant does not depend on $k$. 
\end{thm}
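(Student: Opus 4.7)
The plan is to reduce via the structure theorem for finitely generated $\OkG$-modules to an elementary module
\[ E_M = \OkG^r \oplus \bigoplus_{i=1}^s \OkG/(\pi^{e_i}) \oplus N, \]
where $r = \rg_{\OkG}(M)$, $\sum_i e_i = \mu(M)$, and $N$ is an $\OkG$-torsion module with trivial $\pi$-primary part (so $\mu(N) = 0$), and then to establish the asymptotic separately for each summand. Since $G$ is uniform of dimension $l$, each $G_n = G^{p^n}$ is a uniform normal subgroup of index $p^{nl}$, so $\OkG_{G_n} = \Ok[G/G_n]$ is free over $\Ok$ of rank $p^{nl}$.

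For the free summand, a direct computation gives $(\OkG^r)_{G_n}/\pi^k \cong (\Ok/\pi^k)^{r p^{nl}}$, whose $p$-adic valuation is exactly $f r k p^{nl}$, since $v_p(|\Ok/\pi^k|) = f k$. For a $\pi$-torsion summand $\OkG/(\pi^{e_i})$ I would exploit that coinvariants commute with tensor products to obtain $(\OkG/(\pi^{e_i}))_{G_n}/\pi^k \cong (\Ok/\pi^{\min(e_i,k)})[G/G_n]$, of $p$-adic valuation $f \min(e_i, k) \cdot p^{nl}$. Summing over $i$ yields the claimed leading term $(rk + \mu^{(k)}(M)) \cdot f \cdot p^{nl}$.

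The principal obstacle is to show that $N$, together with the pseudo-null kernel and cokernel of the pseudo-isomorphism $M \to E_M$, contributes only $O(k p^{n(l-1)})$ to $v_p(|M_{G_n}/\pi^k|)$. For $N$ the key point is that $\mu(N) = 0$, so each graded piece $\pi^i N/\pi^{i+1} N$ in the natural $\pi$-filtration of $N/\pi^k N$ is a torsion module over $\F_q\llbracket G\rrbracket$. Applying Perbet's growth bounds \cite{perbet} for uniform $p$-adic Lie groups, the $G_n$-coinvariants of each such graded piece have $\F_q$-dimension of order $p^{n(l-1)}$; right-exactness of coinvariants combined with additivity of $p$-adic valuation along the $k$ graded pieces sums to $O(k p^{n(l-1)})$. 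An analogous argument, using that a pseudo-null $\OkG$-module has annihilator of height at least two and hence its image in $\F_q\llbracket G\rrbracket$ is torsion of codimension at least one, handles the pseudo-isomorphism correction.

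Gluing the four contributions via additivity of $v_p(|\cdot|)$ along short exact sequences produces the asserted asymptotic. The delicate bookkeeping will be to ensure that the implicit constants in the $O(k p^{n(l-1)})$ bounds depend only on intrinsic invariants of $N$ and of the pseudo-null correction modules, but not on $k$: the linear factor $k$ must arise solely from the length of the $\pi$-filtration, while the estimates on each individual graded piece must be uniform in $k$.
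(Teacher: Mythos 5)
Your strategy of reducing to elementary modules and then computing each summand directly parallels the paper's approach in spirit, but the claimed decomposition
\[ E_M = \OkG^r \oplus \bigoplus_{i=1}^s \OkG/(\pi^{e_i}) \oplus N \]
is a genuine gap for general uniform $G$. The structure theory of finitely generated modules over noncommutative Iwasawa algebras (Coates--Schneider--Sujatha, Venjakob) provides pseudo-isomorphisms to elementary modules \emph{only for torsion modules}. A finitely generated torsion-free $\OkG$-module need not be pseudo-isomorphic to a free module when $G$ is not abelian, so you cannot isolate a free direct summand $\OkG^r$ up to pseudo-isomorphism. (When $G\cong\Z_p^l$ this decomposition is correct, but the theorem is stated for arbitrary uniform $G$.) Everything you wrote about the free and $\pi$-primary summands, and the use of Perbet's bounds for $N$ and the pseudo-null corrections, is fine; the problem is only that the starting decomposition is unavailable.

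The paper sidesteps this: after the pseudo-isomorphism reduction it splits $M$ along the (non-split) short exact sequence $0\to t(M)\to M\to \tilde M\to 0$ into the torsion submodule $t(M)$ and the torsion-free quotient $\tilde M=M/t(M)$. The torsion part is treated by the structure theorem and Perbet's Lemme 2.8, producing the $\mu^{(k)}\cdot f p^{nl}$ term; the torsion-free quotient is handled \emph{without} claiming it is pseudo-isomorphic to $\OkG^r$, using the identity $\rg_{\F_q\llbracket G\rrbracket}(\tilde M/\pi)=\rg_{\OkG}(\tilde M)$ for torsion-free $\tilde M$ and Perbet's Lemme 2.9, yielding the $\rg_{\OkG}(M)\cdot fk\cdot p^{nl}$ term. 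The two pieces are then glued via right-exactness of coinvariants and the $O(p^{n(l-1)})$ bounds on higher homology $H_i(G_n,-)$. To repair your argument you should replace your putative free summand with the torsion-free quotient $\tilde M$, compute its contribution via the mod-$\pi$ rank rather than an explicit free model, and glue along the non-split exact sequence rather than a direct sum.
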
 
\begin{proof} 
  An analogous statement has been proved in  \cite[Theorem~5.5]{kleine-mueller2} for the case ${F = \Q_p}$, i.e. ${\Ok = \Z_p}$ (the corresponding result itself is a generalisation of \cite[Theorem~2.1(ii)]{perbet} and was proved along the same lines). The arguments carry over to the more general setting with only minor changes, and therefore we provide only a rather brief sketch. 
  
  For every finitely generated $\OkG$-module $N$, the homology groups 
  \[ H_i(G_n, N) = \textup{Tor}_i^{\OkG}(\Ok, N) \] 
  can be bounded as follows: for every ${i \in \N}$, we have 
  \[ v_p(|H_i(G_n,N)|) = O(p^{n(l-1)}), \] 
  where $l = \dim(G)$ (this is a direct analogue of \cite[Corollaire~2.3]{perbet}). 
  
  If $N$ is a finitely generated $\OkG$-module such that $N/\pi$ is a torsion $\F_q\llbracket G \rrbracket $-module, then the proof of \cite[Corollaire~2.4]{perbet} (cf. also the proof of \cite[Theorem~5.5]{kleine-mueller2}) implies that 
  \[ v_p(|H_i(G_n, N/\pi^k)|) = O(k p^{n(l-1)}), \] 
  where the implicit constant does not depend on $k$. 
  
  One can deduce from this fact (see \cite[Proposition~2.5]{perbet} and \cite[Proposition~5.6]{kleine-mueller2}) that for any finitely generated $\OkG$-modules $M$ and $N$ which are pseudo-isomorphic, one has 
  \[ |v_p(|M_{G_n}/\pi^k|) - v_p(|N_{G_n}/\pi^k|)| = O(k p^{n(l-1)}). \] 
  By the general structure theory of finitely generated $\OkG$-modules,\footnote{Although the results in \cite{coates-schneider-sujatha} are proved only for $\Z_p\llbracket G\rrbracket $-modules, the conclusions remain valid if $\Z_p$ is replaced by $\Ok$.}  we are reduced to consider elementary $\OkG$-modules. In fact, as in \cite[Proposition~5.6]{perbet}, we are reduced to consider the maximal $\OkG$-torsion submodule $t(M)$ and the quotient ${\tilde{M} = M/t(M)}$ separately. 
  
  Concerning the torsion part, it follows from \cite[Lemme~2.8]{perbet} that 
  \[| v_p(|t(M)_{G_n}/\pi^k|) - \mu^{(k)} \cdot f p^{ln} | = O(k p^{n(l-1)}); \] 
  the reason for the appearance of the factor $f$ is that ${v_p(|\Ok/(\pi^i)|) = fi}$ for every ${i \in \N}$. 
  It remains to study the growth of the coinvariants of ${\tilde{M} = M/t(M)}$. Since the quotient $\tilde{M}/\pi$ is annihilated by $p$, \cite[Theorem~2.1(i)]{perbet} implies that 
  \begin{eqnarray*} 
     v_p(|(\tilde{M}/\pi)_{G_n}|) = v_p(|\tilde{M}_{G_n}/\pi|) & = & \rg_{\F_p \llbracket G \rrbracket }(\tilde{M}/\pi) \cdot p^{nl} + O(p^{n(l-1)}) \\ 
     & = & f \cdot \rg_{\F_q\llbracket G\rrbracket }(\tilde{M}/\pi) \cdot p^{nl} + O(p^{n(l-1)}). 
  \end{eqnarray*} 
  Since $\tilde{M}$ is $\OkG$-torsionfree by construction, it follows from \cite[Lemma~2.2]{kleine-mueller1} that 
  \[ \rg_{\F_q\llbracket G \rrbracket }(\tilde{M}/\pi) = \rg_{\OkG}(\tilde{M}), \] 
  and the latter is of course the same as $\rg_{\OkG}(M)$. Therefore the proof of \cite[Lemme~2.9]{perbet} implies that 
  \[ |v_p(|\tilde{M}_{G_n}/\pi^k|) - f k \cdot \rg_{\OkG}(M)\cdot p^{nl}| = O(k p^{n(l-1)}). \] 
\end{proof} 

\begin{lemma}
\label{lem:cm-torsion contained}
  Let $f$ be a $CM$ modular form and let $K'$ be the field defined in Lemma~\ref{lemm:ground-field-cm}. Let $K_\infty=K'(A_f)$. Then $A_f[p^n]\subseteq A_f(K_n)$. 
\end{lemma}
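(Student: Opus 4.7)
The plan is to combine the $\Z_p^2$-structure of $G = \Gal(K_\infty/K')$ (provided by Lemma~\ref{lemm:ground-field-cm}) with the fact that $A_f[p] \subseteq A_f(K')$, and to pass from this mod $p$ triviality to mod $p^n$ triviality on $G^{p^n}$ via the binomial theorem.

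First, I would identify the Galois action of $G$ on $A_f \cong (F/\Ok)^2$ with a faithful $\Ok$-linear representation $\rho \colon G \to \mathrm{Aut}_\Ok(A_f) = GL_2(\Ok)$; faithfulness is automatic because $K_\infty = K'(A_f)$ is by definition the trivialising extension. The hypothesis $A_f[p] \subseteq A_f(K')$, also supplied by Lemma~\ref{lemm:ground-field-cm}, says exactly that $\rho(\sigma)$ acts trivially on $A_f[p] \cong (\Ok/p\Ok)^2$ for every $\sigma \in G$, so
\[
\rho(G) \subseteq I + p\,M_2(\Ok).
\]

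The main step is then to show, for every $\sigma \in G_n = G^{p^n}$, that $\rho(\sigma) \equiv I \pmod{p^n}$. Since $G$ is abelian, one can write $\sigma = \tau^{p^n}$ for some $\tau \in G$; setting $\rho(\tau) = I + pB$ with $B \in M_2(\Ok)$, the matrices $I$ and $B$ commute, so the binomial theorem yields
\[
\rho(\sigma) = (I+pB)^{p^n} = I + \sum_{k=1}^{p^n} \binom{p^n}{k} p^k B^k.
\]
Kummer's theorem gives $v_p\bigl(\binom{p^n}{k}\bigr) = n - v_p(k)$ for $1 \le k \le p^n$, and the elementary inequality $k > v_p(k)$ for $k \ge 1$ (valid because $k \ge p^{v_p(k)} > v_p(k)$) forces every summand into $p^n M_2(\Ok)$. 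Hence $\rho(\sigma) \equiv I \pmod{p^n}$, which means $G_n$ acts trivially on $A_f[p^n] \cong (\Ok/p^n\Ok)^2$, giving the desired inclusion $A_f[p^n] \subseteq A_f^{G_n} = A_f(K_n)$. The only non-trivial step is the binomial estimate; the inequality $p^j > j$ for all $j \ge 0$ applies uniformly to odd $p$ and to $p = 2$, so no separate case analysis for the prime $2$ is needed.
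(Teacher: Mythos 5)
Your proof is correct. Both arguments hinge on the same observation — since $A_f[p]\subseteq A_f(K')$, the image of $G$ in $\mathrm{Aut}_\Ok(A_f)\cong GL_2(\Ok)$ lies in the principal congruence subgroup $I+p\,M_2(\Ok)$, and raising to a $p^n$-th power gains $p$-divisibility — but the packaging differs. The paper runs a one-step induction directly in $A_f$: for $x\in A_f[p^{n+1}]$ and $\sigma\in G_n$ (with $A_f[p^n]\subseteq A_f(K_n)$ known), one has $\sigma(x)-x\in A_f[p]$, which is $G$-invariant, so $\sigma^p(x)=x+p(\sigma(x)-x)=x$; hence $G_{n+1}=G_n^p$ fixes $A_f[p^{n+1}]$. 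You instead translate to $GL_2(\Ok)$ and compute $(I+pB)^{p^n}$ once via the binomial expansion and the Kummer estimate $v_p\bigl(\binom{p^n}{k}\bigr)=n-v_p(k)$ together with $k\ge v_p(k)+1$. The two are the same idea unfolded differently: your route avoids explicit induction and in fact yields the slightly sharper congruence $\rho(\sigma)\equiv I\pmod{p^{n+1}}$ for $\sigma\in G_n$, at the cost of invoking the identification $\mathrm{Aut}_\Ok(A_f)\cong GL_2(\Ok)$ and a binomial estimate; the paper's route is more elementary, requiring no matrix algebra, and never leaves the module $A_f$. One minor remark: the faithfulness of $\rho$, which you note in passing, is not actually used anywhere in your argument.
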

\begin{proof}
We prove the claim by induction: by the hypothesis on $K'$, we see that $A_f[p]$ is defined over $K'\subseteq K_1$. Assume that $A_f[p^n]\subseteq A_f(K_n)$, and let 
\[ x\in A_f[p^{n+1}]\setminus A_f[p^n]. \] 
Then $px=y$ is invariant under $\Gal(K_\infty/K_n)$. Let $\sigma\in \Gal(K_\infty/K_n)$, then ${\sigma(x)=x+v}$ for some ${v\in A_f[p]}$. Thus, ${\sigma^p(x)=x+pv=x}$, showing that the subextension $K_n(A_f[p^{n+1}])/K_n$ is of exponent $p$. As $A_f$ is defined over $K_\infty$, we obtain that ${A_f[p^{n+1}]\subseteq A_f(K_{n+1}})$.
\end{proof}

\begin{lemma} \label{lemma_lim-murty} 
   Let $A$ be associated with a $p$-adic representation of $G_K$ of dimension $d$, and let $K_\infty/K$ be an admissible $p$-adic Lie extension with Galois group $G$. Let $\Sigma$ be a finite set of primes of $K$ which contains $\Sigma_p$, $\Sigma_{\textup{ram}}(A)$ and $\Sigma_{\textup{ram}}(K_\infty/K)$. If ${p = 2}$, then we assume that $K$ is totally imaginary. 
   
   Let ${k \in \N}$. If ${L \subseteq K_\Sigma}$ is a finite extension of $K$ such that ${A[\pi^k] \subseteq A(L)}$, then 
   \[ |v_p(|\Sel_{0,A[\pi^k], \Sigma(L)}(L)|) - d \cdot v_p(|(Y(L) \otimes_{\Z_p} \Ok)/\pi^k|)| \le f d k |\Sigma(L)|, \] 
   where $p^f = |\Ok/(\pi)|$ denotes the inertia degree of $p$ in the extension $F/\Q_p$. 
\end{lemma}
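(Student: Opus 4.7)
The plan is to follow the approach of Coates--Sujatha (\cite[Lemma~3.8]{coates-sujatha}) and Lim--Murty (\cite{lim-murty}), generalised to the $\Ok$-module setting. The starting observation is that the hypothesis ${A[\pi^k] \subseteq A(L)}$ forces trivial $\Gal(L_\Sigma/L)$-action on ${A[\pi^k] \cong (\Ok/\pi^k)^d}$, so $H^1(L_\Sigma/L, A[\pi^k])$ reduces to a group of homomorphisms. First I would check that a class lies in $\Sel_{0, A[\pi^k], \Sigma(L)}(L)$ precisely when its restriction to every decomposition group at ${v \in \Sigma(L)}$ vanishes, equivalently when the associated abelian subextension of $L_\Sigma/L$ splits completely at every prime in $\Sigma(L)$. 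Since complete splitting forbids ramification, such a subextension is in fact unramified everywhere, abelian, and pro-$p$, hence contained in the field $H_\Sigma(L)$ of Section~\ref{section:ideal-class-groups}. Class field theory then yields
\[ \Sel_{0, A[\pi^k], \Sigma(L)}(L) \cong \textup{Hom}(Y_\Sigma(L), (\Ok/\pi^k)^d). \]

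Next I would compute the size of this Hom group via the tensor--Hom adjunction, which rewrites it as $\textup{Hom}_\Ok((Y_\Sigma(L) \otimes_{\Z_p}\Ok)/\pi^k, \Ok/\pi^k)^d$, and then invoke the Matlis self-duality of $\Ok/\pi^k$ over itself to obtain
\[ v_p(|\Sel_{0, A[\pi^k], \Sigma(L)}(L)|) = d \cdot v_p(|(Y_\Sigma(L) \otimes_{\Z_p}\Ok)/\pi^k|). \]
To replace $Y_\Sigma(L)$ by $Y(L)$, I would use the tautological right exact sequence
\[ \bigoplus_{v \in \Sigma(L)}\Z_p \longrightarrow Y(L) \longrightarrow Y_\Sigma(L) \longrightarrow 0 \]
obtained by modding out the classes of primes above $\Sigma$. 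Tensoring with $\Ok$ (flat over $\Z_p$) and reducing modulo $\pi^k$, the kernel of ${Y(L) \twoheadrightarrow Y_\Sigma(L)}$ becomes an $\Ok$-module generated by at most $|\Sigma(L)|$ elements, so the discrepancy between $v_p(|(Y(L)\otimes\Ok)/\pi^k|)$ and $v_p(|(Y_\Sigma(L)\otimes\Ok)/\pi^k|)$ is bounded by $|\Sigma(L)| \cdot v_p(|\Ok/\pi^k|) = fk|\Sigma(L)|$; multiplying by $d$ gives the claimed inequality.

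The hardest part will be the careful bookkeeping of the inertia factor $f$ in the two places where the passage from $\Z_p$- to $\Ok$-coefficients really enters: the $\Ok$-duality step, where one must verify that $\Ok/\pi^k$ acts as a Matlis cogenerator on finite $\Ok/\pi^k$-modules so that the Hom group has the predicted cardinality $|(Y_\Sigma(L) \otimes_{\Z_p}\Ok)/\pi^k|$, and the final error estimate, where the correct bound is $fk|\Sigma(L)|$ rather than $k|\Sigma(L)|$. Apart from these two points, the argument is essentially a transcription of \cite[Lemma~3.8]{coates-sujatha} into the present setting.
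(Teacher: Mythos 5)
Your proposal is correct and follows essentially the same three-step strategy as the paper: reduce $\Sel_{0,A[\pi^k],\Sigma(L)}(L)$ to $\textup{Hom}(Y_\Sigma(L), A[\pi^k])$ using the trivial-action hypothesis, equate the cardinality of this Hom group with that of $(Y_\Sigma(L)\otimes_{\Z_p}\Ok)/\pi^k$, and bound the discrepancy between $Y(L)$ and $Y_\Sigma(L)$ by the $\Sigma(L)$-ideals. The only variation is in the middle step, where the paper computes the size by hand on cyclic factors (counting maps $\Z/p^x\Z \to (F/\Ok)[\pi^k]$ and matching against $\Ok/\pi^{\min(ex,k)}$), whereas you obtain the same identity more abstractly via tensor--Hom adjunction and the Matlis self-duality of $\Ok/\pi^k$; both verifications are valid and equally short.
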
 
This lemma is a generalisation of \cite[Theorem~5.1]{lim-murty}, and the proof is essentially the same: 
\begin{proof} 
  Recall that $K_\Sigma$ denotes the maximal $\Sigma$-ramified algebraic extension of $K$. Since ${A[\pi^k] \subseteq A(L)}$ by hypothesis, we have 
  \[ H^1(K_\Sigma/L, A[\pi^k]) = \textup{Hom}(K_\Sigma/L, A[\pi^k]), \] 
  and 
  \[ H^1(L_v, A[\pi^k]) = \textup{Hom}(L_v, A[\pi^k])\] 
  for each prime $v$. 
  
  Therefore 
  \[ \Sel_{0, A[\pi^k], \Sigma(L)}(L) \cong \textup{Hom}(Y_\Sigma(L), A[\pi^k]). \] 
  The homomorphisms on the right hand side can be described as follows. Suppose that we look for homomorphisms 
  \[ \phi \colon \Z/p^x \Z \longrightarrow (F/\Ok)[\pi^k],  \] 
  for some ${x \in \N}$. 
  Then $\phi$ is completely determined by the image of a generator $b$ of $\Z/p^x \Z$. Now $\phi(b) = a \cdot \pi^{-y}$ for some ${a \in \Ok/(\pi^y)}$ and ${y = \min(ex, k)}$. 
  Therefore one has exactly $p^{f \cdot \min(ex,k)}$ possibilities for $\phi$. 
  
  On the other hand, ${(\Z/p^x \Z \otimes \Ok)/\pi^k \cong \Ok/\pi^{\min(ex,k)}}$ has the same cardinality. Since ${A \cong (F/\Ok)^d}$, this shows that the order of $\textup{Hom}(Y_\Sigma(L), A[\pi^k])$ is equal to $p^z$ with 
  \[ z = d \cdot v_p(|(Y_\Sigma(L) \otimes_{\Z_p} \Ok)/\pi^k|). \] 
  It therefore remains to compare $(Y(L) \otimes_{\Z_p} \Ok)/\pi^k$ and $(Y_\Sigma(L) \otimes_{\Z_p} \Ok)/\pi^k$. To this purpose, we note that the canonical surjection ${\textup{Cl}(L) \twoheadrightarrow \textup{Cl}_\Sigma(L)}$ induces an exact sequence 
  \[ C \otimes_{\Z_p} \Ok \longrightarrow Y(L) \otimes_{\Z_p} \Ok \longrightarrow Y_\Sigma(L) \otimes_{\Z_p} \Ok \longrightarrow 0, \] 
  where $C$ is an abelian group of $p$-rank at most $|\Sigma(L)|$. Therefore we obtain an exact sequence 
  \[ (C \otimes_{\Z_p} \Ok)/\pi^k \longrightarrow (Y(L) \otimes_{\Z_p} \Ok)/ \pi^k \longrightarrow (Y_\Sigma(L) \otimes_{\Z_p} \Ok)/ \pi^k \longrightarrow 0. \] 
  This proves the lemma. 
\end{proof}

\section{Control theorems} \label{section:control} 

\begin{lemma} \label{lemma:control-alt} 
  Let $A$ be associated with a $p$-adic representation of $G_K$ of dimension $d$, and let $\Sigma$ be a finite set of primes of $K$ containing $\Sigma_p$ and $\Sigma_{\textup{ram}}(A)$. If ${p = 2}$, then we assume that $K$ is totally imaginary. 
  
  Let $L$ be a finite extension of $K$ which is unramified outside of $\Sigma$. Then 
  \[ |v_p(|\Sel_{0,A, \Sigma(L)}(L)[\pi^k]|) - v_p(|\Sel_{0,A[\pi^k], \Sigma(L)}(L)|)| \le f d k (1 + |\Sigma(L)|)\] 
  for each ${k \in \N}$, where $f$ denotes the inertia degree of $p$ in the extension $F/\Q_p$. 
\end{lemma}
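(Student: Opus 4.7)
The plan is to apply $G$-cohomology (for $G = \Gal(K_\Sigma/L)$ globally and for $G_{L_v}$ locally) to the Kummer-type exact sequence
\[ 0 \longrightarrow A[\pi^k] \longrightarrow A \stackrel{\pi^k}{\longrightarrow} A \longrightarrow 0 \]
of discrete $G_K$-modules. This will yield a commutative diagram with left-exact rows
\[ \begin{tikzcd}[column sep=small]
0 \ar[r] & \Sel_{0,A[\pi^k], \Sigma(L)}(L) \ar[r] \ar[d, "\alpha"] & H^1(K_\Sigma/L, A[\pi^k]) \ar[r] \ar[d, "\beta"] & \prod_{v \in \Sigma(L)} H^1(L_v, A[\pi^k]) \ar[d, "\gamma"] \\
0 \ar[r] & \Sel_{0,A, \Sigma(L)}(L)[\pi^k] \ar[r] & H^1(K_\Sigma/L, A)[\pi^k] \ar[r] & \prod_{v \in \Sigma(L)} H^1(L_v, A)[\pi^k]
\end{tikzcd} \]
in which the long exact cohomology sequence identifies $\beta$ and $\gamma$ as surjective with kernels $A(L)/\pi^k A(L)$ and $\prod_{v \in \Sigma(L)} A(L_v)/\pi^k A(L_v)$, respectively.

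A routine diagram chase---a partial snake lemma valid for left-exact rows whose middle and right vertical maps are surjective---will then give an injection $\ker(\alpha) \hookrightarrow \ker(\beta)$ and an injection of $\coker(\alpha)$ into a quotient of $\ker(\gamma)$. Consequently
\[ |v_p(|\Sel_{0,A, \Sigma(L)}(L)[\pi^k]|) - v_p(|\Sel_{0,A[\pi^k], \Sigma(L)}(L)|)| \le v_p(|A(L)/\pi^k A(L)|) + \sum_{v \in \Sigma(L)} v_p(|A(L_v)/\pi^k A(L_v)|), \]
so the proof reduces to bounding each term on the right by $fdk$.

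For this I will use that $A(L)$ is a cofinitely generated $\Ok$-submodule of $A \cong (F/\Ok)^d$, hence $A(L) \cong (F/\Ok)^a \oplus T$ for some $a \le d$ and some finite $\Ok$-module $T$. Since the divisible part vanishes modulo $\pi^k$, we have $A(L)/\pi^k A(L) \cong T/\pi^k T$, which has the same order as $T[\pi^k]$ and so cardinality at most $|A[\pi^k]| = |\Ok/(\pi^k)|^d = p^{fdk}$. The same argument applied to each $A(L_v)$ yields the analogous local estimate. Summing produces $v_p(|\ker(\alpha)|) + v_p(|\coker(\alpha)|) \le fdk(1 + |\Sigma(L)|)$, as required. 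I do not anticipate a serious obstacle here: the diagram chase is entirely formal once $\beta$ and $\gamma$ are known to be surjective, and the bound on $A(L)/\pi^k A(L)$ is a direct consequence of the $\Ok$-module structure of $A(L)$.
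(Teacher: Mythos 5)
Your proof is correct. The paper does not give its own argument here—it simply cites \cite[Lemma~3.1]{kleine-mueller1}—but the route you take (the long exact sequence in $\Gal(K_\Sigma/L)$- and $G_{L_v}$-cohomology attached to $0 \to A[\pi^k]\to A \xrightarrow{\pi^k} A \to 0$, the resulting commutative diagram of left-exact rows with $\beta$ and $\gamma$ surjective, the diagram chase bounding $\ker(\alpha)$ by $\ker(\beta)\cong A(L)/\pi^kA(L)$ and $\coker(\alpha)$ by $\ker(\gamma)\cong\prod_v A(L_v)/\pi^kA(L_v)$, and the cofinitely generated $\Ok$-module estimate giving $v_p(|A(L)/\pi^k A(L)|)\le fdk$) is the standard and evidently intended argument, and all steps check out.
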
 
\begin{proof} 
  This is \cite[Lemma~3.1]{kleine-mueller1}. 
\end{proof} 

\begin{thm} \label{control-allg} 
  Let $A$ be associated with a $p$-adic representation of $G_K$ of dimension $d$, let $K_\infty/K$ be an admissible $p$-adic Lie extension of dimension $l$, and write ${G = \Gal(K_\infty/K)}$. We fix a finite set $\Sigma$ of primes of $K$ which contains $\Sigma_p$, $\Sigma_{\textup{ram}}(A)$ and $\Sigma_{\textup{ram}}(K_\infty/K)$. If ${p = 2}$, then we assume that $K$ is totally imaginary. 
  
  Let $k \in \N$. If the decomposition subgroup ${D_v \subseteq G}$ is infinite for each ${v \in \Sigma}$, then the kernel and the cokernel of the natural map 
  \[ r_n \colon \Sel_{0, A[\pi^k], \Sigma(K_n)}(K_n) \longrightarrow \Sel_{0, A[\pi^k], \Sigma}(K_\infty)^{G_n} \] 
  are finite. Moreover, $v_p(|\ker(r_n)|) = O(k)$ and $v_p(|\coker(r_n)|) = O(k p^{n(l-1)})$. Here the implicit constants do not depend on $k$ or $n$. 
\end{thm}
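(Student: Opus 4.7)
The plan is to use the standard restriction diagram comparing the truncated fine Selmer groups at $K_n$ and $K_\infty$, and then bound the kernel and cokernel of $r_n$ by analysing the global and local restriction maps separately. By Lemma~\ref{lemma:lazard} I may assume without loss of generality that $G$ is uniform of dimension $l$ and that $G_n = G^{p^n}$, since passing to an open uniform normal subgroup only alters the implicit constants. The basic setup is then the commutative diagram with left-exact rows
\[
\begin{tikzcd}
0 \arrow[r] & \Sel_{0,A[\pi^k],\Sigma(K_n)}(K_n) \arrow[r] \arrow[d, "r_n"] & H^1(K_\Sigma/K_n, A[\pi^k]) \arrow[r] \arrow[d, "s_n"] & \prod_{v\in\Sigma(K_n)} H^1(K_{n,v},A[\pi^k]) \arrow[d, "t_n"] \\
0 \arrow[r] & \Sel_{0,A[\pi^k],\Sigma}(K_\infty)^{G_n} \arrow[r] & H^1(K_\Sigma/K_\infty, A[\pi^k])^{G_n} \arrow[r] & \bigl(\prod_{w\mid\Sigma} H^1(K_{\infty,w},A[\pi^k])\bigr)^{G_n}
\end{tikzcd}
\]
in which $s_n$ and $t_n$ are restriction maps. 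A standard snake-type argument produces $\ker(r_n) \hookrightarrow \ker(s_n)$ together with the inequality $v_p(|\coker(r_n)|) \le v_p(|\coker(s_n)|) + v_p(|\ker(t_n)|)$, reducing the theorem to bounding these three cohomological quantities.

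For the global contribution I set $N = A[\pi^k]^{\Gal(K_\Sigma/K_\infty)}$, a finite $G_n$-module of order at most $|A[\pi^k]| = p^{fdk}$. The inflation-restriction sequence gives $\ker(s_n) = H^1(G_n, N)$ and embeds $\coker(s_n)$ into $H^2(G_n, N)$. Since $G_n$ is a uniform pro-$p$ group of dimension $l$ that is topologically generated by $l$ elements and has relation rank bounded in terms of $l$, a direct count of continuous cocycles yields $|H^i(G_n,N)| \le |N|^{c_i(l)}$ for $i=1,2$; in particular $v_p(|\ker(s_n)|)$ and $v_p(|\coker(s_n)|)$ are both $O(k)$, with constants independent of $n$.

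For the local contribution, inflation-restriction at each prime $v$ of $K_n$ above a fixed $v_0 \in \Sigma$ identifies the $v$-component of $\ker(t_n)$ with $H^1(D_{v,n}, A[\pi^k]^{G_{K_{\infty,w}}})$, where $w$ is a chosen prime of $K_\infty$ above $v$ and $D_{v,n} = D_{w} \cap G_n = \Gal(K_{\infty,w}/K_{n,v})$. By the hypothesis, $D_{v_0}$ is a closed infinite subgroup of $G$, hence a $p$-adic Lie group of positive dimension $d_{v_0} \ge 1$; so $D_{v,n}$ is topologically generated by at most $l$ elements, and the cocycle count from the previous paragraph shows that each individual local $H^1$ has $v_p$ of order $O(k)$. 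Furthermore, the number of primes of $K_n$ above $v_0$ equals $|G_n \backslash G / D_{v_0}|$, which a standard double-coset/orbit count on the $l$-dimensional $p$-adic Lie group $G$ bounds by $O(p^{n(l - d_{v_0})}) = O(p^{n(l-1)})$ since $d_{v_0} \ge 1$. Multiplying these two bounds gives $v_p(|\ker(t_n)|) = O(k p^{n(l-1)})$, and combining with the snake-type inequality yields the theorem.

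The main delicate point is this local step, and the infinity of the decomposition subgroups enters twice in an essential way: it bounds the generator rank of each local $D_{v,n}$ (so that each local $H^1$ is $O(k)$), and it cuts down the count of primes of $K_n$ above a given $v_0 \in \Sigma$ by a full factor of $p^n$ compared with the trivial upper bound $[K_n:K] = p^{nl}$. Without it the cokernel bound would carry a factor of $p^{nl}$ instead of $p^{n(l-1)}$, which would destroy the dimensional bookkeeping needed when this control theorem is later combined with Theorem~\ref{thm:iwasawamodules}.
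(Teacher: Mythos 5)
Your proposal is correct and follows essentially the same strategy as the paper's proof: the same restriction diagram, the snake-type inequality $v_p(|\coker(r_n)|)\le v_p(|\coker(s_n)|)+v_p(|\ker(t_n)|)$, inflation--restriction to identify the relevant kernels and cokernels, and the hypothesis on the decomposition groups used to bound $|\Sigma(K_n)|$ by $O(p^{n(l-1)})$. The only difference is that where the paper cites \cite[Lemma~2.2]{kundu-lim} for the $H^i(G_n,-)$ bounds and \cite[Lemma~4.1]{limwildkernel} for the prime count, you unpack both with direct cocycle-counting and double-coset arguments, which amounts to re-proving those lemmas in-line rather than a genuinely different route.
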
 
\begin{proof} 
We start from the following commutative diagram:  
  \[\begin{tikzcd}[scale cd = 0.85] 
0\arrow[r]&\textup{Sel}_{0,A[\pi^k],\Sigma(K_n)}(K_n)\arrow[r]\arrow[d,"r_n"]&H^1(K_{\Sigma(K_n)}/K_n,A[\pi^k])\arrow[r]\arrow[d,"h_n"]& \prod_{v\in \Sigma(K_n)}H^1({K_{n,v}},A[\pi^k])\arrow[d,"g_n"] \\0 \arrow[r]&\textup{Sel}_{0,A[\pi^k], \Sigma}(K_\infty)^{G_n}\arrow[r]&H^1(K_{\Sigma}/K_\infty,A[\pi^k])^{G_n}\arrow[r]& \prod_{v\in \Sigma(K_\infty)}H^1({K_{\infty,v}},A[\pi^k]). \end{tikzcd}
\] 
In view of the snake lemma, we have to bound the kernels of $g_n$ and $h_n$ and the cokernel of $h_n$. Now the inflation-restriction exact sequence implies that 
\[ \ker(h_n) = H^1(G_n, A(K_\infty)[\pi^k]) \quad \text{ and } \quad \coker(h_n) = H^2(G_n, A(K_\infty)[\pi^k]), \] 
and a similar description exists for the kernel of $g_n$. Since ${A \cong (F/\Ok)^d}$, it follows that $A[\pi^k]$ is finite. Therefore \cite[Lemma~2.2]{kundu-lim} implies that the kernel and cokernel of $h_n$ are finite and of bounded order. 

The same holds for the kernel of each map 
\[ g_{n,v} \colon H^1(K_{n,v}, A[\pi^k]) \longrightarrow H^1(K_{\infty,v}, A[\pi^k]). \] 
Now we use the hypothesis on the decomposition groups. Let ${G_0 \subseteq G}$ be a uniform normal subgroup of finite index (cf. Lemma~\ref{lemma:lazard}). Let ${K' = K_\infty^{G_0}}$. Then $K'$ is a finite extension of $K$, and no prime ${v \in \Sigma(K')}$ is totally split in the uniform extension $K_\infty/K'$. Therefore \cite[Lemma~4.1]{limwildkernel} implies that $|\Sigma(K_n)| = O(p^{n(l-1)})$. This proves the desired bound for the order of $\ker(g_n)$. 
\end{proof} 

\subsection{Control theorems in the CM case}

Let $f$ be a CM modular form of weight $k \ge 2$ with Hecke character $\psi$ (over $K$). Then $\psi$ has infinity type $(1-k,0)$, see \cite[15.10]{kato}.  Let $L_w$ be the $p$-adic field containing all values of $\psi$. Then $\Gal(\overline{\Q}/K)$ acts on $L_w(\psi)$ as follows. Let $\mathfrak{f}$ be the conductor of $\psi$ and let $\mathfrak{a}$ be an ideal that is coprime to $p\mathfrak{f}$. Then the action of $\Gal(\overline{\Q}/K)$ on $L_w(\psi)$ factors through $\Gal(K(\mathfrak{f}p^\infty)/K)$, and the Artin symbol of $\mathfrak{a}$ in $\Gal(K(\mathfrak{f}p^\infty)/K)$ acts on $L_w(\psi)$ via $\psi(\mathfrak{a})^{-1}$.

\begin{lemma}
\label{lemma:finite-fixgroup}
Let $f$ be a modular form with complex multiplication by some Hecke character $\psi$ which is defined over $K$. Let $K'$ be a finite extension of $K$ and let $w$ be a prime above $p$ such that $V_f \otimes K_w'$ is as a $\Gal(\overline{\Q}/K')$-module unramified outside of the primes above $p$. Let $K_\infty/K'$ be a $\Z_p^l$-extension such that $A_f$ is defined over $K_\infty$. Let $\mathfrak{p}$ be a prime above $p$ in $K'$ and let $D_{\mathfrak{p}}$ be its decomposition group in $\Gal(K_\infty/K')$. 
Then $A_f^{D_\mathfrak{p}}$ is finite. 
\end{lemma}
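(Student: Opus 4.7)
The plan is to reduce the finiteness of $A_f^{D_\p}$ to a vanishing statement about the local Galois representation. Since $A_f \cong (F/\Ok)^2$ as an $\Ok$-module, the submodule $A_f^{D_\p}$ is finite if and only if its divisible part vanishes, which is equivalent to $V_f^{\widetilde{D}} = 0$, where $\widetilde{D} \subseteq G_{K'}$ denotes the preimage of $D_\p$ under the quotient map $G_{K'} \twoheadrightarrow \Gal(K_\infty/K')$. Moreover, since $A_f$ is defined over $K_\infty$ by hypothesis, $G_{K_\infty}$ acts trivially on $A_f$ and hence also on $V_f$ (because an $F$-subspace of $V_f$ contained in the lattice $T_f$ must vanish); thus the Galois action on $V_f$ factors through $\Gal(K_\infty/K')$, and $V_f^{\widetilde{D}}$ agrees with the local invariants $V_f^{G_{K'_\p}}$. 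It therefore suffices to show that $V_f|_{G_{K'_\p}}$ has no non-zero fixed vectors.

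For this I would extend scalars to $L_w$ and invoke the Kato decomposition
\[ V_f \otimes_F L_w \cong V_{L_w}(\psi) \oplus \iota V_{L_w}(\psi) \]
recalled above the statement of Lemma~\ref{lemm:ground-field-cm}. The task then reduces to showing that both local characters $\psi|_{G_{K'_\p}}$ and $(\iota\psi)|_{G_{K'_\p}}$ are non-trivial. This is where the CM structure enters: by local--global compatibility for algebraic Hecke characters, the infinity type $(1-k, 0)$ of $\psi$ with $k \ge 2$ forces the restriction of $\psi_\p$ to the local units $\Ok_{K'_\p}^\times$ to equal a finite-order twist of $u \mapsto \sigma_\p(u)^{1-k}$, where $\sigma_\p \colon K'_\p \hookrightarrow L_w$ is the embedding determined by $\p$. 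Because $1-k \ne 0$, this algebraic factor has infinite order on every open subgroup, so $\psi_\p$ is itself non-trivial. The analogous computation at the conjugate place (yielding the factor $u \mapsto \overline{\sigma}_\p(u)^{1-k}$) handles $(\iota\psi)_\p$.

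The main technical step will be the careful invocation of local--global compatibility in Kato's conventions, so as to translate the infinity type into the explicit formula for the local character on units. Once this is in place, $V_f \otimes_F L_w$ has no $G_{K'_\p}$-fixed vectors, hence neither does $V_f$, so $V_f^{\widetilde{D}} = 0$ and $A_f^{D_\p}$ is finite.
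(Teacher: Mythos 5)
The paper's proof is a \emph{global} argument and differs from yours: it shows $D_\p$ is of finite index in $\Gal(K_\infty/K')$ (using that $K_\infty$ contains the unique $\Z_p^2$-extension of the imaginary quadratic field), picks a prime ideal $\mathfrak{a}$ of $K'$, totally split in $K'/K$, whose Frobenius lies in $D_\p$, and then computes directly that a power of that Frobenius acts on $A_f$ by multiplication by $\psi^{-1}(\mathfrak{a}^t) = \alpha^{k-1}$ for some $\alpha \equiv 1 \bmod \f'$, hence non-trivially. Your approach instead reduces to the vanishing of the \emph{local} invariants $V_f^{G_{K'_\p}}$ and argues via local--global compatibility. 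The reduction you carry out (from $A_f^{D_\p}$ finite, to $V_f^{\widetilde D}=0$, to $V_f^{G_{K'_\p}}=0$, using that the action factors through $\Gal(K_\infty/K')$) is correct and in some ways cleaner, since you never have to know that $D_\p$ has finite index.

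However, the step where you handle the second summand has a gap when $p$ splits in $K$. If $p = v\bar v$ and $\p$ lies above $v$, then $K_v \cong \Q_p$ has a unique embedding into $\overline\Q_p$, so there is no second embedding $\bar\sigma_\p$ of $K'_\p$ realising the infinity type at the conjugate archimedean place. Concretely, the infinity type of $\iota\psi$ at the embedding corresponding to $v$ is $0$, not $1-k$, so local--global compatibility gives that $(\iota\psi)_v\vert_{\Ok_{K_v}^\times}$ is of \emph{finite} order, and your formula $u \mapsto \overline\sigma_\p(u)^{1-k}$ does not apply. In that case you cannot conclude non-triviality of $(\iota\psi)\vert_{G_{K'_\p}}$ from the inertial action alone: you must instead observe that $(\iota\psi)(\mathrm{Frob}_\p)$ equals (up to inversion) the Weil number $\psi(\bar v)$ of non-zero weight, which is not a root of unity, so the unramified part of the local character already has infinite order. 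In the inert or ramified case your computation is fine as stated, since both embeddings $\sigma,\bar\sigma$ extend to $K_v$ and both summands are infinitely ramified. With that supplement your argument closes, but as written the split case is not handled; this is exactly the case the paper's global Frobenius argument deals with uniformly.
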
\begin{proof}
The representation $V_f$ factors through $\Gal(K(\mathfrak{f}'p^\infty)/K)$ for some suitable ideal $\mathfrak{f}'$. It therefore suffices to consider the case that $K_\infty/K'$ is a $\Z_p^2$-extension which contains the unique $\Z_p^2$-extension of $K$. Then $D_\mathfrak{p}$ is a finite index subgroup of $\Gal(K_\infty/K')$. Let $\mathfrak{a}$ be a prime ideal of $K'$ which is totally split in $K'/K$ and such that the Frobenius $\sigma_\mathfrak{a}$ of $\mathfrak{a}$ lies inside $D_\mathfrak{p}$. Let $t$ be the least positive integer such that $\mathfrak{a}^t=(\alpha)$ with $\alpha\equiv 1\pmod{\mathfrak{f}'}$. We obtain that $\sigma^t_\mathfrak{a}$ acts on $A_f$ via multiplication by $\psi^{-1}(\mathfrak{a}^t)=\alpha^{k-1}$ which is certainly non-trivial (i.e. $\ne 1$). Thus, the subgroup of $A_f$ that is fixed under $\sigma_{\mathfrak{a}}$ is finite, since it is contained in $A_f[\pi^k]$ for a suitable ${k \gg 0}$. As $\sigma_{\mathfrak{a}}\in D_{\mathfrak{p}}$, the claim follows.
\end{proof}

\begin{thm}
\label{control-cm}
  Let $f_1, \ldots, f_k$ be modular forms with complex multiplication by imaginary quadratic fields $K_1, \ldots, K_k$. Suppose that we are in one of the following cases 
  \begin{enumerate}[(i)]
      \item Suppose that ${k = 1}$. Let $K'$ be a finite extension of $K_1$ and let $w$ be a prime above $p$ such that $V_{f_1} \otimes K_w'$ is as a $\Gal(\overline{\Q}/K')$-module unramified outside of the primes above $p$. Let $K_\infty/K'$ be a $\Z_p^l$-extension such that $A_{f_1}$ is defined over $K_\infty$. 
  \item Suppose that the imaginary quadratic fields $K_i$ are linearly independent. Let $K'$ be an extension of ${K_1 \cdot \ldots \cdot K_k}$ such that ${V_{f_1} \otimes \ldots V_{f_k} \otimes K_w'}$ is unramified outside $p$ and let $K_\infty/K'$ be a $\Z_p^l$-extension such that $A_{f_1}\otimes \ldots \otimes A_{f_k}$ is defined over $K_\infty.$ For each prime $\p$ of $K_\infty$ above $p$, let $D_\mathfrak{p}$ be the decomposition group in $K_\infty/K$ (note that there are only finitely many different decomposition groups $D_\mathfrak{p}$ as $K_\infty/K'$ is abelian). 
  
  We assume that $(A_{f_1}\otimes \ldots \otimes A_{f_k})^{D_\mathfrak{p}}$ is finite for all $\mathfrak{p}$.
  \end{enumerate}
 Let ${G = \Gal(K_\infty/K')}$, ${G_n = G^{p^n}}$ and ${K_n = K_\infty^{G_n}}$ for each ${n \in \N}$. Then the natural map
  \[r_n\colon \Sel_{0,f}(K_n)\longrightarrow \Sel_{0,f}(K_\infty)^{G_n}\]
  has finite kernel and cokernel. Moreover, in both cases 
  \[ v_p(|ker(r_n)|) = O(n) \quad \text{ and } \quad v_p(|\coker(r_n)|) = O(np^{n(l-1)}). \] 
  \end{thm}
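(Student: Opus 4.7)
The plan is to mimic the snake-lemma argument of Theorem~\ref{control-allg}, but with the full Galois module $A$ in place of the finite truncation $A[\pi^k]$; here $A = A_{f_1}$ in case (i) and $A = A_{f_1} \otimes \ldots \otimes A_{f_k}$ in case (ii). I would set up the standard commutative diagram whose rows are the defining left-exact sequences
\[
0 \longrightarrow \Sel_{0,f}(K_n) \longrightarrow H^1(K_\Sigma/K_n, A) \longrightarrow \prod_{v \in \Sigma(K_n)} H^1(K_{n,v}, A)
\]
and its analogue over $K_\infty$, with vertical maps $r_n$, $h_n$, and $g_n$. The snake lemma yields $\ker(r_n) \subseteq \ker(h_n)$ together with a four-term exact sequence that places $\coker(r_n)$ between $\ker(g_n)$ and $\coker(h_n)$. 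Since $A$ is defined over $K_\infty$ by hypothesis, inflation--restriction identifies $\ker(h_n) = H^1(G_n, A)$, $\coker(h_n) \hookrightarrow H^2(G_n, A)$, and $\ker(g_n) = \bigoplus_{\bar v} H^1(G_{n,\bar v}, A)$, where $\bar v$ runs over primes of $K_\infty$ lying above some $v \in \Sigma(K_n)$ and $G_{n,\bar v}$ denotes the associated decomposition subgroup.

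The heart of the proof is bounding these cohomology groups. The CM hypothesis supplies finiteness of $A^{D_\p}$ at every prime $\p$ of $K_\infty$ above $p$: this is Lemma~\ref{lemma:finite-fixgroup} in case (i) and the explicit assumption in case (ii). Exploiting the explicit form of the CM Galois action---in case (i), the Frobenius $\sigma_\mathfrak{a}$ acts on each isotypic component by the algebraic scalar $\psi^{-1}(\mathfrak{a}^t) = \alpha^{k-1}$, and in case (ii) by the tensor product of such scalars---one can promote the qualitative finiteness of $A^{D_\p}$ to a uniform quantitative estimate $v_p(|A^{G_{n,\bar v}}|) = O(n)$ in $\bar v$. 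After shrinking $G$ to an open uniform subgroup via Lemma~\ref{lemma:lazard}, $G_n$ is a $\Z_p^l$-group acting on the divisible cofinitely generated $\Ok$-module $A \cong (F/\Ok)^{2^k}$; the Koszul complex computation then converts the bound on $H^0$ into $v_p(|H^i(G_n, A)|) = O(n)$ for all $i \ge 0$, and analogously $v_p(|H^1(G_{n,\bar v}, A)|) = O(n)$ uniformly in $\bar v$.

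The kernel bound $v_p(|\ker(r_n)|) \le v_p(|H^1(G_n, A)|) = O(n)$ then follows immediately. For the cokernel, we combine with a prime-counting estimate. As $K_\infty/K'$ is a $\Z_p^l$-extension it is ramified only at primes above $p$, so $\Sigma$ consists only of such primes; their decomposition groups $D_\p$ in $G$ are infinite (by the argument of Lemma~\ref{lemma:finite-fixgroup}), so no prime of $\Sigma$ splits completely in $K_\infty/K'$. By \cite[Lemma~4.1]{limwildkernel} this forces $|\Sigma(K_n)| = O(p^{n(l-1)})$. Combining the uniform local $O(n)$ bound with this prime count gives $v_p(|\ker(g_n)|) = O(n p^{n(l-1)})$, and hence
\[
v_p(|\coker(r_n)|) \le v_p(|\ker(g_n)|) + v_p(|\coker(h_n)|) = O(n p^{n(l-1)}),
\]
as required.

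The main obstacle I anticipate is the uniform-in-$\bar v$ quantitative bound $v_p(|A^{G_{n,\bar v}}|) = O(n)$ at primes above $p$, since one only has the qualitative statement $|A^{D_\p}| < \infty$ while the subgroups $G_{n,\bar v} = D_\p \cap G_n$ shrink with $n$. Making this bound uniform requires reading off from the explicit algebraic form of $\psi$ that the $\pi$-adic valuation of $\alpha^{(k-1)p^n} - 1$ (and its tensor-product analogues in case (ii)) grows at most linearly in $n$; this follows from $\alpha$ not being a root of unity together with standard $p$-adic estimates on $p^n$-th powers of units.
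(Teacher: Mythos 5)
Your skeleton matches the paper's: the same three-column commutative diagram, the same snake-lemma reduction to $\ker(h_n)$, $\coker(h_n)$, $\ker(g_n)$, and the same inflation--restriction identifications (valid because $A$ is defined over $K_\infty$, so $A(K_\infty) = A$). The prime-counting step also lands in the same place, though by a slightly different route: you invoke \cite[Lemma~4.1]{limwildkernel} from the ``no prime totally split'' condition to get $|\Sigma(K_n)| = O(p^{n(l-1)})$, whereas the paper observes that every $D_\p$ has dimension at least $2$ (since $K_\infty$ contains the relevant $\Z_p^2$-extensions) and therefore gets the sharper $O(p^{n(l-2)})$; both suffice for the stated $O(np^{n(l-1)})$ bound on $\coker(r_n)$.

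The genuine divergence is in how one bounds $v_p(|H^i(G_n,A)|)$ and $v_p(|H^1(G_{n,\bar v},A)|)$ by $O(n)$, and this is where your proposal is soft. The paper does \emph{not} invoke any explicit computation with the Hecke character; instead it notes that $M(K')$ and each $M(K_{n,v})$ are finite (via Lemma~\ref{lemma:finite-fixgroup} in case (i) and by hypothesis in case (ii)), that $M(K_\infty)^\vee$ is finitely generated over $\Z_p\llbracket G\rrbracket$, and then cites \cite[Lemma~2.1]{kundu-lim} to bound the $p$-ranks of $H^i(G_n,M)$ independently of $n$ and \cite[Lemma~2.1.1]{Lim-Liang} together with \cite[Proposition~1.9.1]{nsw} to bound the exponent by $p^{ln+b}$; multiplying rank and exponent bounds gives $O(n)$. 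Your route --- an explicit estimate on $v_p(\alpha^{(k-1)p^n}-1)$ to bound $H^0$, then ``the Koszul complex computation'' to propagate to $H^i$ --- has two weaknesses. First, in case (ii) you have only the \emph{assumption} $(\star)$ that $M^{D_\p}$ is finite; there is no guaranteed explicit scalar form to estimate, so your argument as written applies only to case (i). Second, and more importantly, the passage from a bound on $H^0$ to a bound on $H^i$ via the Koszul complex is not automatic: the individual maps $\rho(\gamma_j)^{p^n}-1$ need not be injective on $A^\vee$ even when their joint kernel is finite, so bounding the homology of the Koszul complex requires precisely the rank-and-exponent control that the paper imports from the cited lemmas. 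You correctly identify the uniform-in-$n$ quantitative bound as the crux, but you underestimate what is needed to close it; the module-theoretic argument used in the paper handles both cases uniformly and should replace the handwaved Koszul step.
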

\begin{proof}
Note that for a $CM$ modular form the definition of the fine Selmer group does not depend on the set $\Sigma$ due to Lemma~\ref{lemma:sujatha-witte}. We write $M$ for $A_{f_1}$ and ${A_{f_1}\otimes \ldots \otimes A_{f_k}}$ respectively. Consider the following diagram 
  \[\begin{tikzcd}[scale cd = 0.85] 
0\arrow[r]&\textup{Sel}_{0,M}(K_n)\arrow[r]\arrow[d,"r_n"]&H^1(K_{\Sigma_p}/K_n,M)\arrow[r]\arrow[d,"h_n"]& \prod_{v\in \Sigma_p(K_n)}H^1({K_{n,v}},M)\arrow[d,"g_n"] \\0 \arrow[r]&\textup{Sel}_{0,M}(K_\infty)^{G_n}\arrow[r]&H^1(K_{\Sigma_p}/K_\infty,M)^{G_n}\arrow[r]& \prod_{v\in \Sigma_p(K_\infty)}H^1({K_{\infty,v}},M)\end{tikzcd}
\]
  
We start by analysing the kernel and cokernel of $h_n$. Recall that 
\[ V_{f_i}\otimes L_{w}\cong L_w(\psi_i)\oplus \iota L_w(\psi_i). \] 
It is an easy computation to conclude that $M(K')$ is finite. So we can use \cite[Lemma 6.2]{kleine-mueller2} to see that for both ${i = 1}$ and ${i = 2}$, the group $H^i(K_\infty/K_n,M(K_\infty))$ is finite. By \cite[Lemma 2.1]{kundu-lim}, the $p$-ranks are bounded independently of $n$.
Note that $M(K_\infty)^\vee$ is finitely generated over $\Z_p\llbracket G\rrbracket $. Using \cite[Lemma 2.1.1]{Lim-Liang} together with \cite[Proposition 1.9.1]{nsw} (see also \cite[Proof of Theorem~6.8]{kleine-mueller2}), we can bound the exponent of the torsion subgroup of both cohomology groups by $p^{ln+b}$ for some constant $b$. Therefore $v_p(|\ker(h_n)|)$ and $v_p(|\coker(h_n)|)$ are of size $O(n)$.

It remains to estimate the kernel of $g_n=\prod_{v\mid p}g_{n,v}$. Here $g_{n,v}$ denotes the corresponding local map at the prime $v$. As $K_\infty$ contains the unique $\Z_p^2$-extension of $K_1$ (in the case that $M=A_{f_1}\otimes \ldots \otimes A_{f_k}$ it contains all the $\Z_p^2$-extensions of the $K_i$), we see that each prime above $p$ has a decomposition group of dimension at least two. In the first case we can use Lemma \ref{lemma:finite-fixgroup} to see that $M(K_{n,v})$ is finite for every prime $v\mid p$. In the second case our assumptions assure that $M(K_{n,v})$ is finite. Thus, in both cases we can use the same arguments as for the kernel of $h_n$ to see that $v_p(|\ker(g_{n,v})|)=O(n)$. Due to the fact that each decomposition group has dimension at least two, there are at most $O(p^{n(l-2)})$ primes in $K_{n,v}$ above $p$. Thus $v_p(\vert \ker g_n\vert)=O(np^{n(l-2)})$.
\end{proof}

\section{Proofs of the main results} \label{section:main} 

\begin{thm} \label{thm:A} 
  Let $A$ be associated with a representation of $G_K$ of dimension $d$, and let $K_\infty/K$ be an admissible $p$-adic Lie extension with Galois group $G$. We fix a finite set $\Sigma$ which contains $\Sigma_p$, $\Sigma_{\textup{ram}}(A)$ and $\Sigma_{\textup{ram}}(K_\infty/K)$. If ${p = 2}$, then we assume that $K$ is totally imaginary. 
  
  Let ${k \in \N}$. If the decomposition subgroup ${D_v \subseteq G}$ is infinite for each ${v \in \Sigma}$ and if ${A[\pi^k] \subseteq A(K_\infty)}$, then
  \begin{align} \label{eq:thmA} k \cdot \rg_{\OkG}(Y_{A, \Sigma}^{(K_\infty)}) + \mu^{(k)}_{\OkG}(Y_{A, \Sigma}^{(K_\infty)}) = d \cdot \mu^{(k)}_{\OkG}(Y(K_\infty)\otimes \Ok).\end{align}  
\end{thm}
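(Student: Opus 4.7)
My strategy is to pass from the Iwasawa module $Y_{A,\Sigma}^{(K_\infty)}$ down through finite-level truncated fine Selmer groups to finite-level ideal class groups via a chain of control and comparison results, and then to extract the Iwasawa invariants by applying Theorem~\ref{thm:iwasawamodules} to both ends of this chain.

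First I would invoke Lemma~\ref{lemma:uniform} to reduce to the case in which $G$ is itself uniform of some dimension $l$, since both sides of \eqref{eq:thmA} are multiplied by $[G:G_0]$ when one replaces $G$ by an open uniform normal subgroup $G_0$. With $G$ uniform, write $G_n = G^{p^n}$ and $K_n = K_\infty^{G_n}$. Theorem~\ref{thm:iwasawamodules} applied to $M = Y_{A,\Sigma}^{(K_\infty)}$ yields the expansion
\[ v_p\bigl(|(Y_{A,\Sigma}^{(K_\infty)})_{G_n}/\pi^k|\bigr) = f\bigl(k\cdot \rg_{\OkG}(Y_{A,\Sigma}^{(K_\infty)}) + \mu^{(k)}_{\OkG}(Y_{A,\Sigma}^{(K_\infty)})\bigr) p^{nl} + O(kp^{n(l-1)}), \]
while Pontryagin duality identifies $|(Y_{A,\Sigma}^{(K_\infty)})_{G_n}/\pi^k|$ with $|\Sel_{0,A,\Sigma}(K_\infty)^{G_n}[\pi^k]|$.

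Next I would string together control and comparison results to transport this quantity to the finite level. Theorem~\ref{control-allg} relates $\Sel_{0,A[\pi^k],\Sigma(K_n)}(K_n)$ to $\Sel_{0,A[\pi^k],\Sigma}(K_\infty)^{G_n}$ (the hypothesis that each $D_v$ is infinite enters here through the bound $|\Sigma(K_n)| = O(p^{n(l-1)})$ established inside the proof of that theorem); Lemma~\ref{lemma:control-alt}, applied at the level $K_n$ and, by a parallel inflation-restriction argument using $A[\pi^k] \subseteq A(K_\infty)$, also at the level $K_\infty$, interchanges $\Sel_{0,A[\pi^k]}$ with $\Sel_{0,A}[\pi^k]$ at the required precision; and Lemma~\ref{lemma_lim-murty} applied at $L = K_n$ gives
\[ \bigl|v_p\bigl(|\Sel_{0,A[\pi^k],\Sigma(K_n)}(K_n)|\bigr) - d \cdot v_p\bigl(|(Y(K_n) \otimes_{\Z_p}\Ok)/\pi^k|\bigr)\bigr| \le fdk\,|\Sigma(K_n)| = O(kp^{n(l-1)}). \]
Concatenating all three comparisons yields the key identity
\[ v_p\bigl(|(Y_{A,\Sigma}^{(K_\infty)})_{G_n}/\pi^k|\bigr) = d \cdot v_p\bigl(|(Y(K_n) \otimes_{\Z_p}\Ok)/\pi^k|\bigr) + O(kp^{n(l-1)}). \]

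Finally I would expand the right-hand side in terms of Iwasawa invariants of $Y(K_\infty) \otimes \Ok$. A Perbet-type control argument for the ideal class group tower, or equivalently, the proof of Theorem~\ref{thm:iwasawamodules} applied to $Y(K_\infty) \otimes \Ok$ combined with the natural specialization map $(Y(K_\infty)\otimes \Ok)_{G_n} \to Y(K_n) \otimes \Ok$, would give
\[ v_p\bigl(|(Y(K_n) \otimes_{\Z_p}\Ok)/\pi^k|\bigr) = f\,\mu^{(k)}_{\OkG}(Y(K_\infty) \otimes_{\Z_p}\Ok)\,p^{nl} + O(kp^{n(l-1)}). \]
Equating the two asymptotic expressions for $v_p(|(Y_{A,\Sigma}^{(K_\infty)})_{G_n}/\pi^k|)$, dividing by $f p^{nl}$ and letting $n \to \infty$ then produces \eqref{eq:thmA}. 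The main obstacle I expect is justifying this last expansion: one must show that $v_p(|(Y(K_n) \otimes \Ok)/\pi^k|)$ sees only the truncated $\mu$-invariant of $Y(K_\infty) \otimes \Ok$ and \emph{not} the $\rg$-term that appears in the Theorem~\ref{thm:iwasawamodules} expansion for the coinvariants $(Y(K_\infty)\otimes \Ok)_{G_n}/\pi^k$. This asymmetry reflects the finiteness of each $Y(K_n)$ — the potential free-part contribution of the coinvariants cannot survive to the finite group $Y(K_n)$ — and making it precise requires a genuine control theorem for ideal class groups in the uniform tower, which is more delicate than the coinvariant-level bound that sufficed on the Selmer side.
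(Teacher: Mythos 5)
Your roadmap coincides with the paper's: reduce to $G$ uniform via Lemma~\ref{lemma:uniform}, apply Theorem~\ref{thm:iwasawamodules} to $Y_{A,\Sigma}^{(K_\infty)}$, transport the resulting invariant of coinvariant orders to the finite-level ideal class groups through Theorem~\ref{control-allg}, Lemma~\ref{lemma:control-alt} and Lemma~\ref{lemma_lim-murty}, and then expand $v_p(|(Y(K_n)\otimes\Ok)/\pi^k|)$ asymptotically. So this is essentially the proof the paper gives.

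The ``main obstacle'' you flag at the end, however, has a cleaner resolution than the one you sketch, and as written your proposal does leave it open. You worry about why no rank term survives on the class-group side and suggest this requires killing a ``potential free-part contribution of the coinvariants.'' In fact there is no free part to kill: the hypothesis that each $D_v$ is infinite means no prime of $\Sigma$ is completely split in $K_\infty/K$, and Ochi--Venjakob then gives that $Y(K_\infty)$ is already a \emph{torsion} $\Z_p\llbracket G\rrbracket$-module, so $\rg_{\OkG}(Y(K_\infty)\otimes\Ok)=0$ and the rank term in Theorem~\ref{thm:iwasawamodules} simply vanishes. What you do still need is a control theorem comparing $(Y(K_\infty)\otimes\Ok)_{G_n}/\pi^k$ to $(Y(K_n)\otimes\Ok)/\pi^k$ with $O(p^{n(l-1)})$ error; the paper gets this by invoking the map $\alpha_n\colon Y(K_\infty)_{G_n}/p^x\to Y(K_n)/p^x$ from the proof of \cite[Theorem~7.2]{kleine-mueller2} (with $ex\geq k$) and then running a short diagram argument to descend from $p^x$-quotients to $\pi^k$-quotients after tensoring with $\Ok$. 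That tensor-and-truncate step is a genuine wrinkle your sketch elides, but it is elementary once the $p^x$-level control bound is in hand; it is not ``more delicate than the Selmer side,'' since the Selmer-side control (Theorem~\ref{control-allg}) is itself another coinvariant-level bound of the same precision.
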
 
\begin{proof} 
  In view of Lemma~\ref{lemma:lazard}, there exists a uniform subgroup ${G_0 \subseteq G}$ of finite index. Let ${K' = K_\infty^{G_0}}$ be the subfield of $K_\infty$ which is fixed by $G_0$. Since each decomposition subgroup $D_v$ is infinite by hypothesis, no prime of $\Sigma$ is completely split in the induced uniform extension $K_\infty/K'$. This is all what we need in the sequel. We therefore may without loss of generality assume that $G$ itself is uniform because Lemma~\ref{lemma:uniform} implies that changing $K$ to $K'$ just implies that each term in the equation~\eqref{eq:thmA} will be multiplied by the same factor ${x = [G: G_0]}$. 
  
  Since no prime ${v \in \Sigma}$ is totally split in $K_\infty/K$, it follows from \cite[Theorem~6.1]{ochi-venjakob} that the projective limit ${Y(K_\infty) = \varprojlim_n Y(K_n)}$ of the ideal class groups is a torsion $\Z_p\llbracket G \rrbracket $-module. Let $x$ be minimal such that $ex\ge k$. We have seen in the proof of \cite[Theorem~7.2]{kleine-mueller2} (see in particular equation~(16) in this proof) that the kernels and the cokernels of the natural maps 
  \[ \alpha_n\colon Y(K_\infty)_ {G_n}/p^x \longrightarrow Y(K_n)/p^x \] 
  are finite and of rank $O(p^{n(l-1)})$ as $n \to \infty$. If we tensor both sides with $\Ok$, the kernel and cokernel stay of the same order of magnitude $O(p^{n(l-1)})$ (in fact, the $p$-valuations of the cardinalities are multiplied by $f$). In the following, we will denote the induced map on the tensored quotients also by $\alpha_n$. Thus, for our fixed ${k \in \N}$, the order of the cokernel of
  \[\alpha'_n\colon(Y(K_\infty)_{G_n}\otimes \Ok)/\pi^k\longrightarrow  (Y(K_n)\otimes \Ok)/\pi^k\]
  stays bounded (in rank) by $O(p^{n(l-1)})$. Let $x$ be an element in the kernel of $\alpha'_n$ and let $y\in (Y(K_\infty)_{G_n}\otimes \Ok)/\pi^{ex}$ be a preimage of $x$ under the canonical map 
  \[ (Y(K_\infty)_{G_n} \otimes \Ok)/\pi^{ex} \longrightarrow (Y(K_\infty)_{G_n} \otimes \Ok)/\pi^k. \] 
  Then $\alpha_n(y)\in \pi^k(Y(K_n)\otimes \Ok)/\pi^{ex}$. But the kernel and cokernel of $\alpha_n$ are of order of magnitude $O(p^{n(l-1)})$. Thus, 
  $$ \alpha_n^{-1}(\pi^k(Y(K_n)\otimes \Ok)/\pi^{ex})/(\pi^k(Y(K_\infty)_{G_n}\otimes \Ok)/\pi^{ex})$$ is of this size. It is straight forward that $\ker(\alpha'_n)$ is the image of 
  $$\alpha_n^{-1}(\pi^k(Y(K_n)\otimes \Ok)/\pi^{ex})/(\pi^k(Y(K_\infty)_{G_n}\otimes \Ok)/\pi^{ex})$$ in $(Y(K_\infty)_{G_n}\otimes \Ok)/\pi^k$. In particular, the kernel of $\alpha'_n$ is also of order of magnitude $O(p^{n(l-1)})$. 
  
  Therefore we obtain that 
  \[ |v_p(|(Y(K_n)\otimes \Ok)/\pi^k|) - v_p(|(Y(K_\infty)_{G_n}\otimes \Ok)/\pi^k |)| = O(p^{n(l-1)}). \] 
  It follows from Theorem~\ref{thm:iwasawamodules} that  
  \[ |\mu_{\OkG}^{(k)}(Y(K_\infty)\otimes \Ok) \cdot f p^{nl} - v_p(|(Y(K_\infty)_{G_n}\otimes \Ok)/\pi^k|)| = O(p^{n(l-1)}) \] 
  and therefore also 
  \[ | \mu_{\OkG}^{(k)}(Y(K_\infty)\otimes \Ok) \cdot f p^{nl} - v_p(|(Y(K_n)\otimes \Ok)/\pi^k|)| = O(p^{n(l-1)}). \] 
  On the other hand, Theorem~\ref{thm:iwasawamodules}, applied to $Y_{A, \Sigma}^{(K_\infty)}$, implies that 
  \[ | (\rg_{\OkG}(Y_{A, \Sigma}^{(K_\infty)}) \cdot f k + \mu^{(k)}(Y_{A, \Sigma}^{(K_\infty)}) f) \cdot p^{nl} - v_p(|\Sel_{0, A, \Sigma}(K_\infty)^{G_n}[\pi^k]|) | = O(p^{n(l-1)}). \] 
  Now we apply Lemma~\ref{lemma_lim-murty}, Lemma~\ref{lemma:control-alt} and Theorem~\ref{control-allg} in order to conclude the proof of the theorem. 
\end{proof} 

In the special setting of CM modular forms, we can prove the following more precise comparison result.  
\begin{thm} \label{thm:B} 
  Let ${M = A_f}$ be either associated with a CM modular form or let $M=A_{f_1}\otimes\cdots \otimes  A_{f_k}$ be associated with a Rankin-Selberg convolution of $k$ CM modular forms. We assume that ${M[p] \subseteq M(K)}$. In the first case we set $k=1$. In the second case we assume that $M^{D_\mathfrak{p}}$ is finite for all $\mathfrak{p}$ lying above $p$. Let $K_\infty/K$ be as in Theorem~\ref{control-cm}, i.e. $K_\infty$ is a trivialising extension for $M$, and let $e$ and $f$ denote the ramification index and the inertia degree of $p$ in the extension $F_w/\Q_p$ (respectively, in the extension $(F_1F_2....F_k)_w/\Q_p$ in the second case). Then 
  \[\mu_{\Z_p\llbracket G\rrbracket }(Y_{M}^{(K_\infty)})=2^k fe \mu_{\Z_p\llbracket G\rrbracket }(Y(K_\infty)), \quad (l_0)_{\Z_p\llbracket G\rrbracket }(Y_{M}^{(K_\infty)})=2^k fe(l_0)_{\Z_p\llbracket G\rrbracket }(Y(K_\infty)), \] 
  and \[ \rg_{\Z_p\llbracket G\rrbracket }(Y_{M}^{(K_\infty)})=0. \] 
\end{thm}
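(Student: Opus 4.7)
The plan is to follow the broad strategy of the proof of Theorem~\ref{thm:A}---comparing truncated Selmer groups of $M$ with truncated ideal class groups via Lemma~\ref{lemma_lim-murty} and Lemma~\ref{lemma:control-alt}, passing to the limit via a control theorem, and extracting Iwasawa invariants from asymptotic size formulas---but refining the asymptotic so as to recover not only $\mu$ but also $l_0$. By Lemma~\ref{lemma:sujatha-witte} the fine Selmer group of $M$ over $K_\infty$ is independent of $\Sigma$, so I may fix $\Sigma = \Sigma_p\cup\Sigma_{\textup{ram}}(M)\cup\Sigma_{\textup{ram}}(K_\infty/K)$. Since $K_\infty$ is trivialising, an induction in the style of Lemma~\ref{lem:cm-torsion contained} (extended to the Rankin--Selberg case by means of hypothesis $(\star)$) yields $M[\pi^{k'}]\subseteq M(K_\infty)$ for every $k'\in\N$, and because $K_\infty$ contains the $\Z_p^2$-extensions of each $K_i$, every prime of $K$ above $p$ has decomposition group of dimension at least two. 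The argument of Theorem~\ref{thm:A} therefore carries over with Theorem~\ref{control-cm} in place of Theorem~\ref{control-allg}, yielding for every $k'\in\N$
\[ k'\cdot\rg_{\OkG}(Y_M^{(K_\infty)}) + \mu_{\OkG}^{(k')}(Y_M^{(K_\infty)}) \;=\; 2^k\cdot\mu_{\OkG}^{(k')}(Y(K_\infty)\otimes_{\Z_p}\Ok), \]
where $d = 2^k$ is the $\Ok$-dimension of $M$. Letting $k'\to\infty$ the truncated $\mu$-invariants stabilise, and the linear-in-$k'$ growth on the left forces $\rg_{\OkG}(Y_M^{(K_\infty)}) = 0$ and $\mu_{\OkG}(Y_M^{(K_\infty)}) = 2^k \mu_{\OkG}(Y(K_\infty)\otimes\Ok)$. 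Corollary~\ref{cor:mu-skalierung-f} together with Lemma~\ref{lemma:mu-skalierung-e} translates this into the claimed $\Z_p\llbracket G\rrbracket$-identity with factor $2^k ef$; the $\Z_p\llbracket G\rrbracket$-rank vanishes because $\OkG$ is a free $\Z_p\llbracket G\rrbracket$-module of finite rank.

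For the $l_0$-invariant I invoke a Cuoco--Monsky-type refinement of Theorem~\ref{thm:iwasawamodules}: for any finitely generated torsion $\OkG$-module $N$ (with $G\cong\Z_p^l$) and fixed $k'\in\N$,
\[ v_p\bigl(|N_{G_n}/\pi^{k'}|\bigr) \;=\; \mu_{\OkG}^{(k')}(N)\,f\,p^{nl} + \widetilde{l_0}^{(k')}(N)\,f\,n\,p^{n(l-1)} + O\bigl(p^{n(l-1)}\bigr), \]
where $\widetilde{l_0}^{(k')}(N)$ is a $k'$-truncated $l_0$-invariant that stabilises to $(l_0)_{\OkG}(N)$ once $k'$ exceeds the $\pi$-exponents occurring in the characteristic ideal of $N$. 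This is obtained by reducing to elementary $\OkG$-modules as in the proof of Theorem~\ref{thm:iwasawamodules} and generalising the $\Z_p$-coefficient analysis of \cite{cuoco-monsky} to $\OkG$-coefficients in the same spirit as Lemma~\ref{lemma:l0-2}. Applying the asymptotic to both $Y_M^{(K_\infty)}$ and $Y(K_\infty)\otimes\Ok$ and threading together Theorem~\ref{control-cm}, Lemma~\ref{lemma:control-alt}, Lemma~\ref{lemma_lim-murty}, and the $\alpha_n$-comparison for ideal class groups from the proof of Theorem~\ref{thm:A}, all of whose errors are $o(n p^{n(l-1)})$ for fixed $k'$, I match the $n p^{n(l-1)}$-coefficients to obtain $\widetilde{l_0}^{(k')}(Y_M^{(K_\infty)}) = 2^k \widetilde{l_0}^{(k')}(Y(K_\infty)\otimes\Ok)$; letting $k'\to\infty$ gives $(l_0)_{\OkG}(Y_M^{(K_\infty)}) = 2^k (l_0)_{\OkG}(Y(K_\infty)\otimes\Ok)$. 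Lemma~\ref{lemma:l0-1} and Lemma~\ref{lemma:l0-2} then convert this into the claimed $\Z_p\llbracket G\rrbracket$-identity with factor $2^k ef$.

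The principal technical obstacle is twofold. First, the refined asymptotic above must be established with the correct $l_0$-coefficient and an error $O(p^{n(l-1)})$ uniform in $n$ for fixed $k'$; this requires adapting \cite[Sections~2--3]{cuoco-monsky} to $\Ok$-coefficients in the spirit of Lemma~\ref{lemma:l0-2}, treating the $\pi$-torsion and $\pi$-torsion-free parts of $N$ separately. Second, one must verify that the cokernel of the Selmer control map is in fact $O(n p^{n(l-2)})$---sharper than the bound $O(n p^{n(l-1)})$ stated in Theorem~\ref{control-cm}---which follows from the proof of that theorem because decomposition groups at primes above $p$ have dimension at least two, so $|\Sigma_p(K_n)|=O(p^{n(l-2)})$. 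Together with the $O(k' p^{n(l-1)})$ bound for Lemma~\ref{lemma_lim-murty}, the analogous bound for Lemma~\ref{lemma:control-alt}, and the $O(p^{n(l-1)})$ error for the $\alpha_n$-maps, these guarantee that the $n p^{n(l-1)}$-coefficient really can be isolated, which is what makes the $l_0$-statement accessible.
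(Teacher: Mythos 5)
Your treatment of the $\mu$-invariant and the $\OkG$-rank is sound: invoking the identity of Theorem~\ref{thm:A} at every truncation level $k'$ (available because $M$ is defined over the trivialising extension, so $M[\pi^{k'}]\subseteq M(K_\infty)$ for every $k'$) and then letting $k'\to\infty$ does force $\rg_{\OkG}(Y_M^{(K_\infty)})=0$ and gives the $\mu$-identity, and the $\Z_p\llbracket G\rrbracket$-translations via Corollary~\ref{cor:mu-skalierung-f} and Lemma~\ref{lemma:mu-skalierung-e} are correct. This is a slightly different route from the paper, which works from the outset with the truncation level $p^n$ linked to the $n$th layer $K_n$ and reads off all three invariants at once from the growth formula of \cite[Theorem~5.8, Lemma~7.29 and the proof of~7.28]{kleine-mueller2}.

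For the $l_0$-invariant there is, however, a genuine gap. The ``Cuoco--Monsky-type refinement'' you invoke,
\[
v_p\bigl(|N_{G_n}/\pi^{k'}|\bigr) = \mu^{(k')}_{\OkG}(N)\, f\, p^{nl} + \widetilde{l_0}^{(k')}(N)\, f\, n\, p^{n(l-1)} + O\bigl(p^{n(l-1)}\bigr)
\]
with $k'$ \emph{fixed} and $n\to\infty$, is false, and no definition of a truncated invariant stabilising to $(l_0)_{\OkG}(N)$ can repair it. Take $\Ok=\Z_p$, $G=\Z_p^2=\langle\gamma_1,\gamma_2\rangle$, $T_i=\gamma_i-1$, and $N=\Z_p\llbracket T_1,T_2\rrbracket/(T_1)$. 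Then $\rg_{\Z_p\llbracket G\rrbracket}(N)=0$, $\mu_{\Z_p\llbracket G\rrbracket}(N)=0$, and the characteristic power series $T_1$ is a special prime coprime to $p$, so $(l_0)_{\Z_p\llbracket G\rrbracket}(N)=1$; in particular any truncated invariant of the kind you describe would already have to equal $1$ for $k'=1$. But $N_{G_n}\cong\Z_p\llbracket T_2\rrbracket/\bigl((1+T_2)^{p^n}-1\bigr)$ is $\Z_p$-free of rank $p^n$, so $v_p\bigl(|N_{G_n}/p^{k'}|\bigr)=k'p^n=k'p^{n(l-1)}$ exactly, with no $n\,p^{n(l-1)}$-term at all. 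Matching coefficients against your asymptotic forces $\widetilde{l_0}^{(k')}(N)=0$ for every $k'$, contradicting $(l_0)(N)=1$. In other words, at fixed truncation the entire signal of the special primes is absorbed into the $O(k'\,p^{n(l-1)})$ error already present in Theorem~\ref{thm:iwasawamodules}; $l_0$ is invisible there.

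This is exactly why the paper's sketch truncates at level $p^n$ in layer $K_n$ (comparing $\Sel_{0,M[p^n],\Sigma_p}(K_n)$, $\Sel_{0,M}(K_n)[p^n]$, and $(Y_M)_{G_n}/p^n$): with the truncation growing with $n$, the quantity $v_p\bigl(|Y_M^{(K_n)}/p^n|\bigr)$ does carry an $n\,p^{n(l-1)}$-term governed by $l_0$, which \cite[Theorem~5.8]{kleine-mueller2} then identifies. Your let-$k'\to\infty$ strategy, which works cleanly for $\mu$ and the rank, cannot reach $l_0$; you need the linked-truncation asymptotic rather than a refined fixed-$k'$ asymptotic.
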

Note: since the cyclotomic $\Z_p$-extension of $K$ is contained in $K_\infty$, the fine Selmer groups do not depend on the chosen finite set $\Sigma$ by Lemma~\ref{lemma:sujatha-witte}, as long as $\Sigma$ contains $\Sigma_p$, $\Sigma_{\textup{ram}}(A)$ and $\Sigma_{\textup{ram}}(K_\infty/K)$. 
\begin{proof} 
In the following proof, we always consider ${\Sigma = \Sigma_p}$. The proof is similar to the proof of \cite[Theorem 7.21]{kleine-mueller2} and we only sketch it here. Using Lemma~\ref{lem:cm-torsion contained} and the fact that each decomposition group of any ${v \in \Sigma_p}$ in $K_\infty/K$ has dimension at least two, we can conclude that
\[\vert v_p(\vert \textup{Sel}_{0,M[p^n],\Sigma_p}(K_n)/p^n\vert- 2^k ef v_p(\vert Y(K_n)/p^n\vert)\vert=O(p^{(l-1)n}).\] 
Note that we get the extra factor $ef$ here due to the fact that we compare $$\textup{Sel}_{0,M[p^n],\Sigma_p}(K_n)$$ 
with $\textup{Hom}(Y_{\Sigma_p}(K_n),M[p^n])$. Recall that as $\mathcal{O}$-module $M\cong (F_v/\mathcal{O}(F_v))^{2^k}$. As a $\Z_p$-module it is isomorphic to $(\Q_p/\Z_p)^{2^k ef}$.
By Lemma~\ref{lemma:control-alt} we see that we can substitute $\textup{Sel}_{0,M[p^n],\Sigma_p}(K_n)$ by $\textup{Sel}_{0,M,\Sigma_p}(K_n)[p^n]$ without affecting the error term. By the control theorem (Theorem~\ref{control-cm}) we obtain that
\[\vert v_p(\vert (Y_M)_{G_n}/p^n\vert)-v_p(\vert Y_M^{(K_n)}/p^n\vert)\vert =O(p^{(l-1)n}).\]
We can now use \cite[Theorem 5.8]{kleine-mueller2} together with \cite[Lemma 7.29 and the proof of 7.28]{kleine-mueller2} to prove the desired relations of Iwasawa invariants. 
\end{proof} 
This also proves Theorem~\ref{thm:intro-B}, in view of the following 
\begin{rem} \label{rem:OkG} 
Note that this result is formulated for $\Z_p\llbracket G \rrbracket $-invariants. Using Lemmas~\ref{lemma:l0-1} and \ref{lemma:l0-2} and dividing through $ef$, we obtain that the $l_0$-invariants transform as follows. 

\[(l_0)_{\Ok \llbracket G\rrbracket}(Y_{M})=2^k (l_0)_{\Ok\llbracket G\rrbracket}(Y(K_\infty)\otimes \Ok).\]   For $\mu$-invariants we obtain from Corollary~\ref{cor:mu-skalierung-f} and Lemma~\ref{lemma:mu-skalierung-e} that $$\mu_{\Z_p\llbracket G \rrbracket }(Y_{M})=f\mu_{\Ok\llbracket G \rrbracket }(Y_{M})$$ and $$e\mu_{\Z_p\llbracket G \rrbracket }(Y(K_\infty))=\mu_{\Ok\llbracket G \rrbracket }(Y(K_\infty)\otimes \Ok).$$ 
So in this case we indeed obtain that 
\[\mu_{\Ok\llbracket G \rrbracket }(Y_{M})=2^k  \mu_{\Ok\llbracket G \rrbracket }(Y(K_\infty)\otimes \Ok).\]
\end{rem}

\bibliography{references} 
\bibliographystyle{plain} 

\end{document}